\documentclass[reqno]{amsart}
\usepackage{latexsym,amsfonts,amssymb,amsmath,amsthm,amscd,enumitem}

\usepackage[usenames,dvipsnames]{xcolor}
\usepackage[colorlinks=true]{hyperref}
\usepackage{tikz-cd}
\usepackage{soul}  
\usepackage{stackengine} 
\usepackage[normalem]{ulem}

\usepackage{mathtools}

\usepackage{enumitem}

\usepackage{mdwlist}

\usepackage{comment}
\usepackage[dvipsnames]{xcolor}

\usepackage{bbm}
\usepackage{hyperref}

\def\dotfill#1{\cleaders\hbox to #1{.}\hfill}
\makeatletter
\def\myrulefill{\leavevmode\leaders\hrule height .7ex width 1ex depth -0.6ex\hfill\kern\z@}
\makeatother 

\theoremstyle{plain}
\newtheorem{theorem}{Theorem}[section]
\newtheorem{remark}{Remark}[section]
\newtheorem{lemma}{Lemma}[section]
\newtheorem{proposition}{Proposition}[section]

\newtheorem{definition}{Definition}[section]
\newtheorem{example}{Example}[section]

\numberwithin{equation}{section}
\numberwithin{definition}{section}

\DeclareFontFamily{U}{mathx}{\hyphenchar\font45}
\DeclareFontShape{U}{mathx}{m}{n}{
      <5> <6> <7> <8> <9> <10>
      <10.95> <12> <14.4> <17.28> <20.74> <24.88>
      mathx10
      }{}
\DeclareSymbolFont{mathx}{U}{mathx}{m}{n}
\DeclareFontSubstitution{U}{mathx}{m}{n}
\DeclareMathAccent{\widecheck}{0}{mathx}{"71}
\DeclareMathAccent{\wideparen}{0}{mathx}{"75}

\newcommand\Periwinklesout{\bgroup\markoverwith{\textcolor{Periwinkle}{\rule[0.5ex]{2pt}{0.4pt}}}\ULon}
\newcommand\violetsout{\bgroup\markoverwith{\textcolor{violet}{\rule[0.5ex]{2pt}{0.4pt}}}\ULon}

\newcommand\RR{\ensuremath{\mathbb{R}}}
\newcommand\QQ{\ensuremath{\mathbb{Q}}}

\newcommand\NN{\ensuremath{\mathbb{N}}}
\newcommand\PP{\ensuremath{\mathbb{P}}}

\newcommand{\OFP}{\ensuremath{(\Omega,\mathfrak{F},\PP)}}

\newcommand{\norm}[1]{\ensuremath{\lVert#1\rVert}}

\newcommand{\dd}{\ensuremath{\mathrm{d}}}

\newcommand{\vertiii}[1]{{\left\vert\kern-0.25ex\left\vert\kern-0.25ex\left\vert #1 
    \right\vert\kern-0.25ex\right\vert\kern-0.25ex\right\vert}}

\newcommand{\WST}{weak\nobreakdash-\hspace{0pt}\textup{*} }

\definecolor{afb}{rgb}{0.36, 0.54, 0.66}
\definecolor{o}{RGB}{242, 138, 2}
\definecolor{bur}{rgb}{0.5, 0.0, 0.13}
\definecolor{ph}{rgb}{0.87, 0.0, 1.0}
\definecolor{pers}{rgb}{0.0, 0.65, 0.58}

\begin{document}

\title[Delay equations with measure-valued coefficients]{ Delay equations with measure-valued coefficients:
From Carath\'eodory solutions to measurable semiflows }
\author[M.~Kryspin]{Marek Kryspin}
\address[Marek Kryspin]{Faculty of Pure and Applied Mathematics,
Wroc{\l}aw University of Science and Technology,
Wybrze\.ze Wyspia\'nskiego 27, PL-50-370 Wroc{\l}aw, Poland.}
\email[Marek Kryspin]{marek.kryspin@pwr.edu.pl}
\subjclass[2020]{Primary: 34K06; Secondary 34K05, 37H10, 37H15}
\begin{abstract}
 We study linear delay differential equations whose delay
terms are represented by time-dependent finite signed
measures. This framework accommodates discrete,
distributed, and singular delay contributions without
requiring continuity of the coefficient paths in time.
We establish well-posedness in the Carath\'eodory sense
and sequential dependence of solutions on the
measure-valued parameters under almost-everywhere
weak-* convergence and a common total variation bound.

A central part of the analysis associates a signed
measure on the time--delay domain with each coefficient
path and establishes an equivalent integral formulation.
This representation yields three equivalent
characterizations of a $\sigma$-algebra on the parameter
space suited to measurability of the continuation
operator. A measurable fixed-point argument then
provides measurability of the solution maps, despite
the possible failure of continuity in the product
weak-* topology.

For coefficients generated by a measure-preserving
base flow, these results lead to measurable linear
skew-product semiflows on continuous and absolutely
continuous phase spaces. Under additional ergodic
hypotheses and the assumption of a finite top
Lyapunov exponent, eventual compactness and
regularization yield Oseledets decompositions with
the same Lyapunov exponents and multiplicities
on both spaces.
\end{abstract}
\keywords{Linear delay differential equations,
measure-valued coefficients,
continuous dependence on parameters,
measurable skew-product semiflows,
Oseledets decomposition,
Lyapunov exponents.}
\maketitle

\section*{Introduction} 

Linear delay differential equations describe evolutionary processes
whose present rate of change depends on a history of previous
states. A measure on the history interval provides a convenient
representation of this dependence, allowing discrete and
distributed delays to be treated within the same framework.
In this paper, we consider equations of the form
\begin{equation} \label{main-intro-eq}
    z'(t)
    =
    \int_{[-1,0]} z(t+s)\,\dd\mu_t(s),
\end{equation}
on a finite time interval $[0,T]$, with $T>0$, subject to
the initial condition
\begin{equation*}
    z(s)=z_0(s),
    \qquad s\in[-1,0],
\end{equation*}
where $\mu_t$ is a finite signed Borel measure on $[-1,0]$.
Solutions are understood in the Carath\'eodory sense:
they are absolutely continuous on $[0,T]$ and satisfy
the equation almost everywhere.

The measure representation includes several familiar models.
The equation with a fixed delay
\begin{equation*}
z'(t)=a(t)z(t)+b(t)z(t-1)
\end{equation*}
is obtained by choosing
\begin{equation*}
\mu_t=a(t)\delta_0+b(t)\delta_{-1}.
\end{equation*}
Replacing the fixed atom by a moving one,
$\mu_t=a(t)\delta_0+b(t)\delta_{-r(t)}$, yields
\begin{equation*}
z'(t)=a(t)z(t)+b(t)z(t-r(t)),
\end{equation*}
where $r(t)\in[0,1]$. In particular, the choice
$r(t)=t-\lfloor t\rfloor$ gives equations with piecewise constant
argument, studied classically by Cooke and Wiener
\cite{CookeWiener1984} and more recently by Jaur\'e and Maul\'en
\cite{JaureMaulen2026} in the context of remotely almost periodic
solutions. Absolutely continuous components of the measures describe
distributed memory. The theory of Volterra integro-differential
equations developed by Gripenberg, Londen and Staffans
\cite{GripenbergLondenStaffans1990} provides a related setting.
Its finite-memory subclass includes
\begin{equation*}
z'(t)=a(t)z(t)+\int_0^t k(t-r)z(r)\,\dd r,
\end{equation*}
where $k\in L_1([0,1])$ is extended by zero outside $[0,1]$.
This equation is represented by
\begin{equation*}
\mu_t
=
a(t)\delta_0+
\mathbbm{1}_{[-\min\{t,1\},0]}(s)k(-s)\,\dd s.
\end{equation*}
Another connection is provided by the renewal equation
\begin{equation*}
z(t)=\int_0^1 k(r)z(t-r)\,\dd r,
\end{equation*}
whose dynamical formulation is discussed by Diekmann and
Verduyn Lunel \cite[Section~7]{DiekmannVerduynLunel2021}.
For $k\in W^{1,1}([0,1])$, differentiation yields
\eqref{main-intro-eq} with
\begin{equation*}
\mu=k(0)\delta_0-k(1)\delta_{-1}+k'(-s)\,\dd s.
\end{equation*}
Finally, certain equations with proportional delay can be transformed
into equations with fixed delay by a logarithmic change of time.
Kulenovi\'c \cite[Remark~2]{Kulenovic1995} describes this reduction
for first-order Euler equations with delay.

These examples fit naturally into the classical theory of
functional differential equations. As developed in particular
by Hale and Verduyn Lunel~\cite{Hale-Le}, this theory provides
the analytical foundations for initial-value problems and their
solution operators. Earlier contributions include the
well-posedness theory for hereditary systems developed by
Hale and Cruz~\cite{HaleCruz1970}, and the work of Miller and
Sell~\cite{MillerSell1968,MillerSell1970} on existence,
uniqueness, and continuity of solutions of integral equations.
The representation of bounded linear delay functionals by
measures is already part of this classical framework.

The structural and duality aspects of time-varying retarded
functional differential equations were studied by Colonius,
Manitius, and Salamon~\cite{ColoniusManitiusSalamon1989}.
Their work provides a further point of reference for the
operator-theoretic treatment of non-autonomous delay equations.

In the present work, we regard the entire measure-valued
coefficient path as a parameter and study the dependence
of the solution on this path. We then consider coefficient
paths generated by a measure-preserving base flow.
This connects deterministic well-posedness and parameter
dependence with the measurability needed to construct
a linear skew-product semiflow.

Continuous dependence on parameters has a long history
in the theory of ordinary differential equations, with
contributions by Petrov~\cite{Petrov1962,Petrov1964},
Opial~\cite{Opial1967}, and Heunis~\cite{Heunis1984}.
These works provide historical context for the
parameter-dependence questions considered here.

More directly related to measure-valued parameters is the
work of Banks, Dediu, and Nguyen~\cite{BanksDediuNguyen2007},
who develop a sensitivity theory for dynamical systems
with parameters in convex subsets of topological vector spaces.
Their applications include distributed delay equations
depending on probability measures and approximations of
such measures in the Prohorov topology.
Our analysis concerns time-dependent signed measures,
sequential continuity of the associated solution maps,
and a measurable structure on the space of coefficient paths.
The priority is on convergence and measurability of solutions,
rather than differentiability with respect to the measure
parameter.

A further point of comparison is the work of Longo, Novo,
and Obaya~\cite{LongoNovoObaya2019} on non-autonomous
Carath\'eodory delay differential equations.
They introduce and study topologies of integral type
that yield continuous dependence and continuous
skew-product semiflows, including results on Sobolev
phase spaces. Their results demonstrate how the topology
of the coefficient space can be adapted to the evolution.
Here, the parameter space is expressed directly in terms
of measure-valued paths. We examine the distinction between
sequential continuity in the relative product weak-*
topology and measurability with respect to a
$\sigma$-algebra generated by integrated delay functionals.
The resulting construction provides a measure-based
counterpart to the study of coefficient spaces through
integral topologies.

Related questions also arise in the theory of generalized
ordinary differential equations and Kurzweil--Stieltjes
integration. The connection between generalized differential
equations and continuous dependence on parameters goes
back to Kurzweil~\cite{Kurzweil1957}.
The contemporaneous work of Kurzweil and
Vorel~\cite{KurzweilVorel1957} also forms part of
the early development of parameter-dependence theory
for ordinary differential equations.
Federson, Mesquita, and
Slav\'ik~\cite{FedersonMesquitaSlavik2012} establish
existence, uniqueness, and continuous-dependence results
for measure functional differential equations and relate
them to functional dynamic equations on time scales.
Slav\'ik~\cite{Slavik2015} develops further well-posedness
results for abstract generalized differential equations
and their functional counterparts.
A typical equation in this setting is
\begin{equation*}
    y(t)
    =
    y(t_0)
    +
    \int_{t_0}^{t} f(y_\tau,\tau)\,\dd g(\tau),
\end{equation*}
where the time integral is understood in the
Kurzweil--Stieltjes sense.
In our formulation, the measure acts on the delay variable,
and the outer time integral is a Lebesgue integral.
The two settings therefore place the measure dependence
at different levels of the equation, while sharing an
interest in integral formulations under limited regularity.

The assumptions used for the deterministic equation
are stated directly in terms of weak-* measurability
of the coefficient path and bounds on its total variation.
They allow delay measures without densities and coefficient
paths without time continuity. The precise conditions,
including those needed for convergence of sequences of
parameters, are given with the corresponding results.
This separation of hypotheses is useful because
well-posedness of an individual equation, convergence
under variation of its coefficients, and the construction
of a random dynamical system impose different requirements.

An important feature of the parameter space is that
sequential continuity need not imply continuity.
Under a common bound on total variation, pointwise weak-*
convergence of a sequence of coefficient paths gives
convergence of the associated integral operators.
However, these operators need not be continuous for the
relative product topology on the space of measurable paths.
Indeed, a basic product neighbourhood constrains the path
at only finitely many times, whereas integration depends
on its values over a time interval.
Consequently, sequential convergence results alone
do not settle the measurable properties required for
the random evolution.

To address these properties, we associate with each
admissible coefficient path on $[0,T]$ a finite signed
measure $\nu$ ($\nu=\nu(\mu)$) on $Q=[0,T]\times[-1,0]$.
This measure is characterized by
\begin{equation*}
    \int_{Q}\varphi(\tau,s)\,\dd\nu(\tau,s)
    =
    \int_0^T
    \int_{[-1,0]}
    \varphi(\tau,s)\,\dd\mu_\tau(s)\,\dd\tau,
    \qquad \varphi\in C(Q).
\end{equation*}
We establish the integration identity that allows the
Carath\'eodory equation to be written as
\begin{equation*}
    z(t)
    =
    z_0(0)
    +
    \int_{[0,t]\times[-1,0]}
    z(\tau+s)\,\dd\nu(\tau,s).
\end{equation*}
The significance of this representation for the present
work lies in its connection with the parameter space:
the measure on the product interval provides a common
object through which the delay functionals and their
measurability can be studied.

Let $\mathfrak m_T$ denote the space of admissible
measure-valued paths on $[0,T]$.
We identify a $\sigma$-algebra $\Sigma_T$ through
three equivalent descriptions: measurability of the
integrated delay functionals, weak-* measurability
of $\mu\mapsto\nu(\mu)$, and measurability of this map
with respect to the weak-* Borel structure.
The equivalence connects test functions of the form
$x(\tau+s)$, which occur naturally in the equation,
with arbitrary continuous test functions on $Q$.

This measurable structure is then used to prove
measurability of the continuation operator for each
fixed argument. Continuity in the function variable
allows measurable successive approximations to be
constructed, and their limit gives a measurable solution
map. Thus, the passage from coefficients to solutions
is justified within the same parameter framework
used to formulate the equation.

The interpretation of non-autonomous equations through
skew-product constructions has a substantial history.
Artstein~\cite{Artstein1977ODE} studies the dynamics
associated with translates of an ordinary differential
equation and gives conditions of integral type ensuring
compactness of the resulting construction.
In \cite{Artstein1977Kurzweil}, the analysis is extended
to limiting equations that may belong to the generalized
class introduced by Kurzweil.
These works illustrate the importance of choosing
a coefficient space compatible with translation
and passage to limits.
In the present paper, the corresponding issue is
the measurable structure of measure-valued coefficient
paths and its compatibility with the solution operator.

The dynamical part of the paper follows the cocycle
framework presented in Arnold's
monograph~\cite{Arnold1998}. In this framework,
a measure-preserving base flow describes the evolution
of the random environment, and a measurable cocycle
describes the corresponding evolution in the phase space.
For coefficients generated by $\mu_t=\mu_{\theta_t\omega}$, 
the solution operators satisfy the cocycle identity
\begin{equation*}
    U_\omega(t+s)
    =
    U_{\theta_s\omega}(t)\,U_\omega(s),
    \qquad t,s\geq0.
\end{equation*}
The algebraic identity follows from uniqueness.
The measurable cocycle structure additionally requires
control of the dependence on $\omega$, which is obtained
from the assumption that
\begin{equation*}
    \Omega\ni\omega
    \mapsto 
    (\mu_{\theta_t\omega})_{t\in[0,T]}
    \in\mathfrak m_T
\end{equation*}
is $(\mathfrak{F},\Sigma_T)$-measurable for every $T>0$.
This is the point at which the analysis of the coefficient
space enters the random dynamical system construction.

A classical spectral perspective on linear skew-product
dynamics is provided by Sacker and
Sell~\cite{SackerSell1978}, whose theory relates
spectral structure to exponential dichotomies.
This provides a complementary background to the
measure-theoretic description of growth rates and
invariant subspaces used in the Oseledets theory below.

The asymptotic theory of random linear evolutions provides
a further motivation for this construction.
Mierczy\'nski and Shen~\cite{MierczynskiShen2013}
develop an abstract theory of generalized principal
Lyapunov exponents, principal Floquet spaces, and
exponential separation for positive random dynamical
systems. Their applications include cooperative random
delay systems~\cite{MierczynskiShen2016}.
Mierczy\'nski, Novo, and
Obaya~\cite{MierczynskiNovoObaya2018}
develop exponential separation of type~II in a random
setting adapted to delay equations.
These results concern distinguished positive invariant
directions and their separation from complementary
directions, under focusing
assumptions.

The relation between this theory and Oseledets
decompositions is further developed by Kryspin,
Mierczy\'nski, Novo, and
Obaya~\cite{Marek-Janusz-Sylvia-Rafa},
who provide two approaches to exponential separation
for random delay systems, one of them using an
Oseledets decomposition. Their work also includes
a realization on an absolutely continuous phase space. Our contribution concerns the
measure-valued coefficient setting and the verification
of the analytical and measurable properties needed
to apply this dynamical framework.

For the Oseledets analysis, a particularly close
reference is Mierczy\'nski, Novo, and
Obaya~\cite{MierczynskiNovoObaya2020}.
They study random systems with a fixed discrete delay
and compare Lyapunov exponents and Oseledets
decompositions on continuous and integrable phase
spaces. Here, the delay term is described by an
admissible time-dependent signed measure, and the
phase spaces under consideration are
$C([-1,0])$ and $AC([-1,0])$.
The abstract results of
Kryspin~\cite{Kryspin2024} provide a framework
for comparing decompositions on an ambient space
and a more regular invariant subspace.

The use of signed delay measures does not itself
impose positivity of the evolution.
Consequently, the Oseledets decomposition considered
here must be distinguished from the existence of
a principal Floquet space or an exponential separation,
which requires additional structural assumptions.
Under the ergodic and integrability hypotheses stated
in the final section, we verify the conditions needed
for the Oseledets analysis and use the regularization
of solution segments to relate the two phase-space
realizations.

The contribution of the paper is therefore a connected
analysis of measure-valued delay coefficients:
from deterministic well-posedness and parameter
dependence, through an integrated-measure representation
and a characterization of the relevant measurable
structure, to a random linear evolution and its
Oseledets decomposition.
This connection is necessary because the asymptotic
theory applies to a measurable cocycle, whereas
the equation is initially specified only through
a family of delay measures.

The paper is organized as follows. Section~\ref{sec:Preliminaries} fixes the notation and recalls the
measure-theoretic conventions used throughout the paper.
Section~\ref{sec:Main-eq} introduces the equation and the standing
assumptions.
Section~\ref{sec:Carath-sol} establishes existence and uniqueness in the \allowbreak
Carath\'eodory setting and studies dependence on initial
data and coefficient paths.
Section~\ref{sec:Measure-integration-and-disintegration} constructs the integrated measure and proves
the equivalence of the two integral formulations. Section~\ref{sec:Integral-formulation} revisits the contraction argument using the integrated measure
and its pushforward, providing an alternative proof of existence and
uniqueness within this formulation.
Section~\ref{sec:skew-product} develops the skew-product construction,
with the measurability assertions deferred to
Section~\ref{sec:Measurable-structures}.
That section identifies the relevant Borel structures
and equivalent descriptions of $\Sigma_T$, and proves
measurability of the solution operators.
The final section~\ref{sec:Oseledets} concerns the Oseledets decomposition
and its relation to the choice of phase space.

\section{Preliminaries and notation} 
\label{sec:Preliminaries}
\smallskip\par
For a topological space $S$, we denote its Borel
$\sigma$-algebra by $\mathfrak{B}(S)$. Lebesgue measure is denoted by $\ell$, and $\mathfrak{L}(I)$ denotes the Lebesgue $\sigma$-algebra
on an interval $I$. Given a set $X$ and a family of mappings
$f_i:X\to Y_i$, $i\in J$, where each $(Y_i,\Sigma_i)$
is a measurable space, we write
\begin{equation*}
    \sigma(f_i:i\in J)
    :=
    \sigma(
        \{f_i^{-1}(A):i\in J,\ A\in\Sigma_i\}
    )
\end{equation*}
for the smallest $\sigma$-algebra on $X$ making
all the mappings $f_i$ measurable.
When the target spaces are topological spaces,
their Borel $\sigma$-algebras are understood
unless stated otherwise.

For a signed Borel measure $\alpha:\mathfrak{B}(S)\to\RR$, we define the total variation measure $|\alpha|$ as $\alpha^++\alpha^-$ where the pair $(\alpha^+,\alpha^-)$ is the Hahn\nobreakdash--\hspace{0pt}Jordan decomposition. For general case of measure space $(X,\Sigma,\alpha)$ with a signed measure $\alpha$, by the \emph{positive} set we understand the unique (up to null sets) $P^{\alpha}\in\Sigma$ satisfying one of the following (equivalent) definitions:  for any $A\in\Sigma$ we have $\alpha^{+}(A)=\alpha(A\cap P^{\alpha})$, $\alpha\ge 0$ on $P^{\alpha}$ and $\alpha\le 0$ on $X\setminus P^{\alpha}$ or simply  $\alpha(P^{\alpha})=\sup\{\alpha(A):A\in\Sigma\}$. In an analogous way we define the \emph{negative} set $N^{\alpha}$. Superscripts will denote the measure for which we consider whether a set is positive or negative. We wish to retain them, as later we will have many positive and negative sets simultaneously for different measures.
For a finite signed measure $\alpha$, we write
\begin{equation*}
    \|\alpha\|_{\mathrm{TV}}:=|\alpha|(S),
\end{equation*}
where $S$ is the underlying measurable space.
For $1\leq p\leq\infty$, the space $L_p(\alpha)$ is
understood in the standard sense as a space of equivalence
classes of measurable real-valued functions, identified
when they agree $|\alpha|$-almost everywhere.
It is a Banach space with the norm
\begin{equation*}
    \|f\|_{L_p(\alpha)}
    :=
    \Big(\int |f|^p\,\dd|\alpha|\Big)^{1/p},
    \qquad 1\leq p<\infty,
\end{equation*}
and
\begin{equation*}
    \|f\|_{L_\infty(\alpha)}
    :=
    |\alpha|\text{-}\operatorname*{ess\,sup}|f|.
\end{equation*}
All $\alpha$-almost-everywhere statements are likewise
understood with respect to $|\alpha|$.
In particular, an $\alpha$-null set is a measurable
set $A$ satisfying $|\alpha|(A)=0$.

For Banach spaces $X$ and $Y$, we denote by
$\mathcal L(X,Y)$ the space of bounded linear operators
from $X$ to $Y$, equipped with the operator norm.
We write $\mathcal L(X):=\mathcal L(X,X)$. 
By $X^*$ we denote the dual of a Banach space $X$, we denote by $\langle \cdot, \cdot \rangle_{X, X^*}$ the duality pairing. We omit subscripts, as this does not lead to misunderstandings. The topology on $X^*$ will be the standard norm topology unless stated otherwise. Most often, we will be dealing with the space of continuous functions here denoted by $C(I)$, where $I\subset \RR$. We shall also use the space $AC(I)$ of absolutely continuous
functions on $I$. Throughout, these spaces consist of
real-valued functions. For a compact interval $I=[a,b]$,
we equip $C(I)$ with the supremum norm
\begin{equation*}
    \|u\|_{C(I)}:=\sup_{t\in I}|u(t)|,
\end{equation*}
and $AC(I)$ with the norm
\begin{equation*}
    \|u\|_{AC(I)}
    :=
    |u(a)|+\int_a^b |u'(t)|\,\dd t.
\end{equation*}
Both spaces are Banach spaces with these norms, and
\begin{equation*}
    \|u\|_{C(I)}\leq\|u\|_{AC(I)},
    \qquad u\in AC(I).
\end{equation*}
Unless explicitly stated otherwise, all references to
norms, convergence, and continuity in these spaces
are understood with respect to the norms defined above.

For $T>0$ and a fixed initial history $u_0\in C$, define
\begin{equation*}
    C_{u_0}([0,T])
    :=
    \{v\in C([0,T]):v(0)=u_0(0)\}
\end{equation*}
and
\begin{equation*}
    AC_{u_0}([0,T])
    :=
    \{v\in AC([0,T]):v(0)=u_0(0)\}.
\end{equation*}
These are nonempty closed affine subsets of their
respective Banach spaces, equipped with the induced
metrics. In particular, both are complete.
Their definitions depend on $u_0$ only through
its value at zero; the full history enters through
the extension defined below.

\smallskip\par
For $z\in C([-1,T])$ and $t\in[0,T]$, its history
segment $z_t\in C$ is defined by
\begin{equation*}
    z_t(s):=z(t+s),
    \qquad s\in[-1,0].
\end{equation*}
The notation $z_t$ always refers to this restricted
segment, rather than to the entire translated function.

We write $z(\cdot;\mu,u_0)$ for the solution corresponding
to the coefficient path $\mu$ and initial history $u_0$.
When the coefficient path is generated by a base point
$\omega$, we write $z(\cdot;\mu,u_0)$ as well and understand it as a deterministic solution $z(\cdot;(\mu_{\theta_t\omega})_{t\in[0,T]},u_0)$.

The solution operators on $C$ and $AC$ are denoted by
$U_\omega^C(t)$ and $U_\omega^{AC}(t)$, respectively.
The superscript is omitted whenever the phase space
is clear from the context.

\section{Main equation and assumptions} \label{sec:Main-eq}
Delay equations arise naturally in mathematical ecology,
where changes in a population may depend on its past
rather than only its present state. Recruitment, for
example, may reflect a range of maturation times,
leading to a contribution distributed over a history
interval. A measure on this interval provides a unified
description: atoms represent discrete delays, while
densities describe continuously distributed contributions.
Allowing the measure to vary with time accommodates
changes in the strength and distribution of these effects.

We consider a linear formulation in which a finite signed
measure $\mu_t$ weights the contribution of the past
values $z(t+s)$ to the current rate of change.
The use of signed measures allows both positive and
negative contributions, as may occur in linearizations
of population models. With the maximal delay normalized
to one, we study the following Carath\'{e}odory type initial-value problem 
\begin{equation}
\label{eq:IVP-C-ODEs}
\begin{cases}
z'(t) = \displaystyle \int_{[-1,0]} z(t+s) \, \dd\mu_t (s), & t \in [0, T]
\\[2ex] 
z(t) = u_0(t), & t \in [-1, 0],
\end{cases}
\end{equation}
where the mapping $\mu$ is assumed to satisfy the following.     
\begin{enumerate}[label=(\textup{A\arabic*}),series=supo,leftmargin=24pt] 
\item\label{A2-ODEs} The mapping 
\begin{equation*}
    [0,T] \ni t\mapsto \langle \cdot ,\mu_t \rangle \in C^* 
\end{equation*}
is weak\nobreakdash-\hspace{0pt}* measurable i.e., for any test function $f\in C$ the mapping
\begin{equation*}
     [0,T] \ni t\mapsto \langle f ,\mu_t \rangle \in \RR
\end{equation*}
is $(\mathfrak{L}([0, T]), \mathfrak{B}(\RR))$\nobreakdash-\hspace{0pt}measurable.
\end{enumerate}

We define the space $\mathfrak{m}$ as a $\mathrm{TV}$-norm-bounded (by some fixed $K>0$) and closed subset of Borel measures defined on $\mathfrak{B}([-1,0])$ equipped with the relative weak-* topology. Namely, 
\begin{equation*}
    \mathfrak{m} = \{ \nu :   \text{ signed measure on } \mathfrak{B}([-1,0]) \text{ such that } \norm{\nu}_{\mathrm{TV}} \le K\}.
\end{equation*}
We have the classical result for the weak-* compactness of $\mathfrak{m}$. 
\begin{remark}
       $\mathfrak{m}$ is weak-* compact, and its weak-* topology is metrizable.
\end{remark}
\noindent
See~\cite[Thm. 3.6.17]{DM} for more details. In order to investigate continuous dependence on measure valued functions (coefficient paths) we define the coefficient space $\mathfrak{m}_T$ as a subset of $\mathfrak{m}^{ [0,T]}$ of mappings $\mu$ that satisfy assumption~\ref{A2-ODEs}.  In other words, we have
\begin{equation*}
    \mathfrak{m}_T \vcentcolon = \{ \, \mu \in \mathfrak{m}^{[0,T]} : 
    \ref{A2-ODEs}
    \,\}.
\end{equation*}
The space $\mathfrak m_T$ inherits the relative product
(Tychonoff) topology from $\mathfrak m^{[0,T]}$, where
$\mathfrak m$ carries the weak-* topology.
Convergence in this topology requires weak-* convergence
at every $t\in[0,T]$.

For the parameter-dependence results, we use the weaker
notion of convergence
\begin{equation*}
    \mu^{(n)}\to\mu
    \quad\Longleftrightarrow\quad
    \mu_t^{(n)}\overset{*}{\rightharpoonup}\mu_t
    \text{ for Lebesgue-almost every }t\in[0,T].
\end{equation*}
We regard this as a prescribed sequential convergence
structure, without assuming that it is induced by
a topology. Unless stated otherwise, convergence of
coefficient paths refers to this notion.
Since convergence in the relative product topology
implies the convergence above, all subsequent results
on sequential dependence also apply to sequences
converging in the Tychonoff topology. This does not,
however, assert continuity in that topology.

In some arguments below, the uniform total variation bound
can be replaced by the weaker condition
\begin{equation*}
    \int_0^T \|\mu_t\|_{\mathrm{TV}}\,\dd t<\infty,
\end{equation*}
with measurability of $[t\mapsto \|\mu_t\|_{\mathrm{TV}}]$ ensured by \ref{A2-ODEs}; see Lemma~\ref{lemma:mu-measurable}.
This is consistent with the integrable bounds used in
classical Carath\'eodory theory. In particular, the
existence and uniqueness arguments and the associated
Gr{\"o}nwall estimates can be adapted to this weaker assumption.
By contrast, our proofs of sequential continuous dependence
use the common uniform bound to obtain estimates and
domination independent of the coefficient path.
For consistency, we retain the uniform boundedness
assumption throughout and do not specify separately
which results admit weaker hypotheses.

\smallskip\par 
We begin with two auxiliary lemmas. See~\cite{MK-diss} for more details. 
\begin{lemma}
  For any $\mu\in\mathfrak{m}_T$ and $z\in C([-1,T])$ the mapping 
    \begin{equation}\label{eq:w^*-measurable-pairing}
       [0,T] \ni t\mapsto \langle z_t ,\mu_t \rangle  \in \RR 
    \end{equation}
    is in $L_{\infty}([0,T])$.
\end{lemma}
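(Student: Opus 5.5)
The plan is to show two things: the map $t\mapsto\langle z_t,\mu_t\rangle$ is bounded, and it is Lebesgue measurable. Boundedness is immediate. For every $t\in[0,T]$ we have
\begin{equation*}
    |\langle z_t,\mu_t\rangle|
    \le \|z_t\|_{C}\,\|\mu_t\|_{\mathrm{TV}}
    \le K\,\|z\|_{C([-1,T])},
\end{equation*}
since $\mu_t\in\mathfrak m$. So the real work is measurability. The difficulty is that \ref{A2-ODEs} only gives measurability of $t\mapsto\langle f,\mu_t\rangle$ for a \emph{fixed} test function $f$, whereas here the test function $z_t$ moves with $t$.

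To handle this, I will approximate $z_t$ by functions that are piecewise constant in $t$. Since $z$ is uniformly continuous on the compact interval $[-1,T]$, the map $[0,T]\ni t\mapsto z_t\in C$ is uniformly continuous. Fix $n$, take the partition $t_k=kT/n$, and define
\begin{equation*}
    g_n(t):=\sum_{k=0}^{n-1}\mathbbm 1_{[t_k,t_{k+1})}(t)\,\langle z_{t_k},\mu_t\rangle
    +\mathbbm 1_{\{T\}}(t)\,\langle z_T,\mu_t\rangle .
\end{equation*}
Each summand is a product of the indicator of an interval with the function $t\mapsto\langle z_{t_k},\mu_t\rangle$. That function is $(\mathfrak L([0,T]),\mathfrak B(\RR))$-measurable by \ref{A2-ODEs} applied with the fixed test function $f=z_{t_k}\in C$. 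Hence each $g_n$ is measurable.

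Next I compare $g_n$ with the target function. For $t\in[t_k,t_{k+1})$,
\begin{equation*}
    |g_n(t)-\langle z_t,\mu_t\rangle|
    \le \|z_{t_k}-z_t\|_C\,\|\mu_t\|_{\mathrm{TV}}
    \le K\,\omega_z(T/n),
\end{equation*}
where $\omega_z$ is the modulus of continuity of $z$ on $[-1,T]$. The right-hand side tends to $0$, so $g_n\to\langle z_\cdot,\mu_\cdot\rangle$ uniformly on $[0,T]$, and in particular pointwise everywhere. A pointwise limit of measurable functions is measurable, so the pairing is Lebesgue measurable. Combined with the bound above, this places it in $L_\infty([0,T])$, with norm at most $K\|z\|_{C([-1,T])}$.

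The main obstacle is this moving-test-function issue. It is resolved by the uniform continuity of $t\mapsto z_t$ together with the uniform TV bound $K$, which is what makes the approximation error uniform in $t$. If one only had the integrable bound $\int_0^T\|\mu_t\|_{\mathrm{TV}}\,\dd t<\infty$, the same argument would still give pointwise convergence, hence measurability, provided $\|\mu_t\|_{\mathrm{TV}}<\infty$ for each $t$. It would then give membership in $L_1$ rather than $L_\infty$.
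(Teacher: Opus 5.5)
Your proof is correct, but it follows a different route from the paper's. The paper factors the map as $t\mapsto(z_t,\mu_t)\mapsto\langle z_t,\mu_t\rangle$. It asserts that the first arrow is measurable into $\mathfrak B(C)\otimes\mathfrak B(C^*,w^*)$ and that the pairing is Borel on $C\times(C^*,w^*)$. It then concludes with a cited measure-theoretic lemma, using separability of $C$ to pass between the product $\sigma$-algebra and the Borel $\sigma$-algebra of the product. The bound $K\|z\|_{C([-1,T])}$ is obtained exactly as you do.

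You instead apply \ref{A2-ODEs} only to the finitely many fixed test functions $z_{t_k}$. Uniform continuity of $t\mapsto z_t$, together with $\|\mu_t\|_{\mathrm{TV}}\le K$, then gives uniform convergence of explicitly measurable approximants. This is more elementary and self-contained. You never need Borel measurability of $t\mapsto\mu_t$ as a map into $(C^*,w^*)$, which the paper must deduce from \ref{A2-ODEs} by a countable-generation argument like that of Lemma~\ref{lem:measure-Borel}. Nor do you need Borel measurability of the pairing on $C\times(C^*,w^*)$, where it is only separately continuous and is jointly continuous only on norm-bounded sets.

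The paper's factorization is shorter, and it applies unchanged to any measurable family of test functions $t\mapsto f_t\in C$, not only to segments of a continuous $z$. Your argument extends to that setting as well, but only after replacing the uniform time partition by a countably-valued measurable approximation of $t\mapsto f_t$, which separability of $C$ provides.
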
  

\begin{proof} We start by showing the $(\mathfrak{L}([0, T]), \mathfrak{B}(\RR))$\nobreakdash-\hspace{0pt}measurability of the mapping~\eqref{eq:w^*-measurable-pairing}. Notice that it can be rewritten as a composition of 
\begin{equation*}
            t\mapsto (  z_t  , \mu_t ) \mapsto  \langle z_t ,\mu_t \rangle,   
\end{equation*}
where the first mapping is $(\mathfrak{L}([0, T]), \mathfrak{B}(C)\otimes \mathfrak{B}( C^*,w^*) )$\nobreakdash-\hspace{0pt}measurable (recall that $\mathfrak{B}( C^*,w^*)$ stands for the Borel $\sigma$\nobreakdash-\hspace{0pt}algebra generated by the \WST topology) and the second mapping is $(\mathfrak{B}(C \times (C^*,w^*)) ,  \mathfrak{B}(\RR) )$\nobreakdash-\hspace{0pt}measurable. Therefore, by~\cite[Lemma~6.4.2(i)]{V.B.} and the separability of $C$, the mapping~\eqref{eq:w^*-measurable-pairing} is $(\mathfrak{L}([0, T]),\allowbreak \mathfrak{B}(\RR))$-measurable. It remains to notice that the mapping~\eqref{eq:w^*-measurable-pairing} is bounded by $K\norm{z}_{C([-1,T])}$, so we obtain the membership in $L_{\infty}([0,T])$. 
\end{proof} 

\begin{lemma}\label{lemma:mu-measurable}
    For any $\mu\in\mathfrak{m}_T$ the mapping 
    \begin{equation*}
              [0,T] \ni t\mapsto \| \mu_t \|_{\mathrm{TV}}\in \RR  
    \end{equation*}
    is in $L_{\infty}([0,T])$. Similarly, the mappings 
        \begin{equation*}
         \big[\, [0,T] \ni t \mapsto  \mu_t^+([-1,0]) \in \RR \, \big  ] \quad\text{and}\quad  \big[\, [0,T] \ni t \mapsto \mu_t^-([-1,0]) \in \RR \, \big  ]. 
    \end{equation*}
    are in $L_{\infty}([0,T])$, too.
\end{lemma}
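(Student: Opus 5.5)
Boundedness is immediate: each of the three functions lies in $[0,K]$, since $\mu_t\in\mathfrak m$ gives $\|\mu_t\|_{\mathrm{TV}}\le K$ and $\mu_t^\pm([-1,0])\le|\mu_t|([-1,0])$. The whole task is therefore to show that these functions are $(\mathfrak L([0,T]),\mathfrak B(\RR))$-measurable. The idea is to write each of them as a countable supremum of functions that \ref{A2-ODEs} already makes measurable.

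For the total variation we use Riesz duality, $\|\mu_t\|_{\mathrm{TV}}=\sup\{\langle f,\mu_t\rangle : f\in C,\ \|f\|_{C}\le 1\}$. Since $C=C([-1,0])$ is separable, the closed unit ball $B$ has a countable norm-dense subset $\{f_k\}_{k\in\NN}$. For a fixed measure $\alpha$, the map $f\mapsto\langle f,\alpha\rangle$ is norm-continuous, so the supremum over $B$ equals the supremum over $\{f_k\}$. This gives
\begin{equation*}
    \|\mu_t\|_{\mathrm{TV}}=\sup_{k\in\NN}\langle f_k,\mu_t\rangle,
\end{equation*}
a countable supremum of functions that are Lebesgue measurable by \ref{A2-ODEs}. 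Hence $t\mapsto\|\mu_t\|_{\mathrm{TV}}$ is measurable. It is important here that the dense family does not depend on $t$.

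For $\mu_t^+$ we do not try to work directly with the positive set $P^{\mu_t}$, because it depends on $t$ in an uncontrolled way. Instead we use the Jordan decomposition:
\begin{equation*}
    \mu_t^+([-1,0])=\tfrac12\big(\|\mu_t\|_{\mathrm{TV}}+\mu_t([-1,0])\big),\qquad
    \mu_t^-([-1,0])=\tfrac12\big(\|\mu_t\|_{\mathrm{TV}}-\mu_t([-1,0])\big).
\end{equation*}
The map $t\mapsto\mu_t([-1,0])=\langle\mathbbm 1,\mu_t\rangle$ is measurable by \ref{A2-ODEs}, because the constant function $\mathbbm 1$ belongs to $C$. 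Combined with the first part, this shows both functions are measurable.

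Alternatively, $\mu_t^+([-1,0])=\sup\{\langle f,\mu_t\rangle: 0\le f\le 1,\ f\in C\}$, which can again be reduced to a countable dense family. The main point needing care is the justification of the dual-norm formula with continuous test functions rather than bounded Borel ones. This is exactly the Riesz representation isometry $C^*\cong$ signed Radon measures on $[-1,0]$. It holds because every finite Borel measure on a compact metric space is regular, so the measurable function $\mathbbm 1_{P^{\mu_t}}-\mathbbm 1_{N^{\mu_t}}$ can be approximated in $L_1(|\mu_t|)$ by continuous functions with values in $[-1,1]$ (Lusin's theorem).
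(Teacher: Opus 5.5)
Your proof is correct and follows essentially the same route as the paper: $\|\mu_t\|_{\mathrm{TV}}$ is written as a supremum over a countable dense subset of the unit ball of $C([-1,0])$, which is measurable by \ref{A2-ODEs}, and $\mu_t^{\pm}([-1,0])$ are then recovered as $\tfrac12(\|\mu_t\|_{\mathrm{TV}}\pm\mu_t([-1,0]))$. Your additional justification of the dual-norm formula, via Riesz representation and Lusin's theorem, is a detail the paper takes for granted.
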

\begin{proof}
    The boundedness of those mappings is clear since $\|\mu_t\|_{\mathrm{TV}} \le K$ for all $t\in [0,T]$. It suffices to notice that from weak\nobreakdash-\hspace{0pt}* measurability it follows that the mapping  
    \begin{equation*}
           [0,T] \ni t \mapsto \int_{[-1,0]} \dd \mu_t =  \mu_t([-1,0]) \in \RR 
    \end{equation*}
    is $(\mathfrak{L}([0,T]) ,  \mathfrak{B}(\RR) )$\nobreakdash-\hspace{0pt}measurable and further, 
    \begin{equation*}
        \| \mu_t \|_{\mathrm{TV}}  = \sup\Big\{ \Big|\int_{[-1,0]} f(s) \, \dd \mu_t (s)\Big| : f\in \mathcal{G} \Big\}, 
    \end{equation*}
    where $\mathcal{G}$ is a countable, dense subset of the closed unit ball in $C([-1,0])$. 
    Moreover, maps 
    \begin{equation*}
         \big[\, [0,T] \ni t \mapsto  \mu_t^{\pm}([-1,0]) \in \RR \, \big  ] 
    \end{equation*}
are also  $(\mathfrak{L}([0,T]) ,  \mathfrak{B}(\RR) )$\nobreakdash-\hspace{0pt}measurable since we have 
\begin{equation*}
      \mu_t^{\pm}([-1,0]) =\frac{  \| \mu_t \|_{\mathrm{TV}} \pm  \mu_t([-1,0])}{2}. 
\end{equation*}
\end{proof}
For completeness, we present two classical results concerning continuity and sequential continuity of a contraction mapping with respect to parameters. Let $x^s$ be the unique fixed point of a mapping $F(\cdot,s):X\to X$ parametrised by $S$, where $S$ is a topological space (in general not necessarily metrizable). The next two theorems, also known as the uniform contraction principle, provide sufficient conditions under which the mapping $[\, S\ni s\mapsto x^s \in X \,]$ is continuous. See~\cite[Thm. B.2]{Aulbach}. 
\begin{theorem}
\label{thm:continuous_dependence_fixed_point}
    Let $(X,d)$ be a complete metric space, $S$ a topological space and $F \colon X \times S\to X$ such that 
 \begin{itemize}
     \item for each $x\in X$ the mapping  $F(x,\cdot ) \colon S \to X$ is continuous, 
     \item there exists $\varkappa\in [0,1)$  such that  for each $s\in S$ the mapping $F(\cdot, s) \colon X \to X$  is $\varkappa$\nobreakdash-\hspace{0pt}Lipschitz, i.e., 
     \begin{equation*}
        d(F(x,s), F(y,s))\le \varkappa d(x, y),
    \end{equation*}
    for all $x,y\in X$ and $s\in S.$
 \end{itemize}   
 Then the unique fixed point obtained from the Banach fixed point theorem depends continuously on $s\in S$. 
\end{theorem}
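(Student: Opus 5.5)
The plan is to show directly that for any $s_0\in S$ and any $\varepsilon>0$ there is a neighbourhood $V$ of $s_0$ with $d(x^s,x^{s_0})<\varepsilon$ for all $s\in V$. No metrizability of $S$ is needed, because the argument works with a single neighbourhood.

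The key estimate compares the two fixed points using only the fixed-point identities and the uniform Lipschitz bound. For $s\in S$, write $x^s=F(x^s,s)$ and $x^{s_0}=F(x^{s_0},s_0)$. The triangle inequality gives
\begin{equation*}
    d(x^s,x^{s_0})
    \le
    d\bigl(F(x^s,s),F(x^{s_0},s)\bigr)
    +
    d\bigl(F(x^{s_0},s),F(x^{s_0},s_0)\bigr).
\end{equation*}
By the uniform contraction hypothesis, the first term is at most $\varkappa\, d(x^s,x^{s_0})$. Since $\varkappa<1$, rearranging yields
\begin{equation*}
    d(x^s,x^{s_0})
    \le
    \frac{1}{1-\varkappa}\,
    d\bigl(F(x^{s_0},s),F(x^{s_0},s_0)\bigr).
\end{equation*}
Here $x^s$ exists and is unique for every $s$ by the Banach fixed point theorem, since $X$ is complete and $F(\cdot,s)$ is a $\varkappa$-contraction.

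It remains to control the right-hand side. The point $x^{s_0}$ is now fixed, so the map $s\mapsto F(x^{s_0},s)$ is continuous by the first hypothesis. Hence the set
\begin{equation*}
    V:=\{s\in S: d(F(x^{s_0},s),F(x^{s_0},s_0))<(1-\varkappa)\varepsilon\}
\end{equation*}
is an open neighbourhood of $s_0$, being the preimage of an open ball. On $V$ we get $d(x^s,x^{s_0})<\varepsilon$, which proves continuity at $s_0$. Since $s_0$ was arbitrary, $s\mapsto x^s$ is continuous.

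The only point needing care is that continuity in $s$ is used solely at the fixed argument $x^{s_0}$, and never at the moving point $x^s$. This is exactly what the uniform Lipschitz bound buys, and it is why the pointwise continuity hypothesis suffices for a general topological space $S$. The same estimate applied along a sequence $s_n\to s_0$ gives the sequential version, assuming only sequential continuity of $F(x,\cdot)$.
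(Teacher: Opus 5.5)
Your proof is correct and is essentially the paper's argument. The paper cites this statement from \cite[Thm.~B.2]{Aulbach}, but its proof of Theorem~\ref{Thrm:familie-uniform-banach-uniform} runs exactly your estimate $d(x^s,x^{s_0})\le (1-\varkappa)^{-1}\, d\bigl(F(x^{s_0},s),F(x^{s_0},s_0)\bigr)$ and then chooses a neighbourhood using continuity of $F(x^{s_0},\cdot)$; with $d_s=d$ and $c=C=1$ it reduces to your proof.
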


\begin{theorem}
\label{thm:seq_continuous_dependence_fixed_point}
    Let $(X,d)$ be a complete metric space, $S$ a topological space and $F \colon X \times S \to X$ such that 
 \begin{itemize}
     \item for each $x\in X$ the mapping  $F(x,\cdot ) \colon S\to X$ is sequentially continuous, 
     \item there exists $\varkappa\in [0,1)$  such that  for each $s\in S$ the mapping $F(\cdot, s) \colon X \to X$ is $\varkappa$\nobreakdash-\hspace{0pt}Lipschitz. 
 \end{itemize}   
 Then the unique fixed point obtained from the Banach fixed point theorem depends sequentially continuously on $s\in S$. 
\end{theorem}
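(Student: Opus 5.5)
We prove the sequential uniform contraction principle by the standard telescoping estimate, exactly as in the continuous version, with nets replaced by sequences. Fix a sequence $s_n\to s$ in $S$, and let $x^{s_n}$ and $x^{s}$ denote the unique fixed points of $F(\cdot,s_n)$ and $F(\cdot,s)$. These exist by the Banach fixed point theorem, since each $F(\cdot,\sigma)$ is a $\varkappa$-contraction on the complete space $(X,d)$. It suffices to show $d(x^{s_n},x^{s})\to0$.

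The key step is a single triangle-inequality bound. Using $x^{s_n}=F(x^{s_n},s_n)$ and $x^s=F(x^s,s)$, we write
\begin{equation*}
d(x^{s_n},x^{s})
\le d\bigl(F(x^{s_n},s_n),F(x^{s},s_n)\bigr)+d\bigl(F(x^{s},s_n),F(x^{s},s)\bigr).
\end{equation*}
The first term is at most $\varkappa\, d(x^{s_n},x^{s})$ by the uniform Lipschitz hypothesis, whose constant does not depend on $n$. Rearranging, which is legitimate because $\varkappa<1$ and all distances are finite, gives
\begin{equation*}
d(x^{s_n},x^{s})\le \frac{1}{1-\varkappa}\, d\bigl(F(x^{s},s_n),F(x^{s},s)\bigr).
\end{equation*}

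Now apply the first hypothesis with the fixed point $x=x^s$. Sequential continuity of $F(x^s,\cdot)$ yields $F(x^s,s_n)\to F(x^s,s)$ in $X$, so the right-hand side tends to $0$. Hence $x^{s_n}\to x^s$, which is sequential continuity of $s\mapsto x^s$.

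There is no genuine obstacle here. The only point to watch is that continuity in $s$ is used at one fixed point $x^s$ only, so no uniformity in $x$ is needed. Uniformity of $\varkappa$ in $s$ is what permits the absorption step. No metrizability of $S$ is required, since we work directly with sequences.
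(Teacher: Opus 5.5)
Your proof is correct and matches the paper's approach. The paper cites Aulbach for this statement, and its proof of Theorem~\ref{Thrm:familie-uniform-banach-uniform} (the version with equivalent metrics $d_s$, stated to hold also with sequential continuity) uses the same steps: the triangle inequality at the fixed point $x^{s_0}$, absorption of the $\varkappa$-term, and continuity of $F(x^{s_0},\cdot)$; taking $d_s=d$ and working with sequences gives exactly your argument.
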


\section{Carath\'{e}odory type solution formulation and continuous dependence on parameters}\label{sec:Carath-sol}

We first establish existence and uniqueness through
the Carath\'eodory integral formulation.
Weighted norms make the continuation operator contractive
on an arbitrary finite time interval.
We then study dependence on the measure-valued parameter,
distinguishing almost-everywhere weak-* convergence
from continuity in the relative product topology.

\begin{definition}
   For fixed parameters $\mu\in \mathfrak{m}_T$ and $u_0 \in C([-1, 0])$, a solution of the equation~\eqref{eq:IVP-C-ODEs} is defined as a continuous function $z\in C([-1,T])$ satisfying the integral equation 
   \begin{equation}\label{eq:int-variant}
       z(t) = u_0(0) + \int_0^t \displaystyle \int_{[-1,0]} z(\tau +s) \, \dd\mu_{\tau} (s) \,\dd \tau  
   \end{equation}
   for $t\ge 0$ and $z(t)=u_0(t)$ for $t\in[-1,0]$. 
\end{definition}
\begin{proposition} \label{prop:unigue-sol-in-C}
   For any $\mu\in \mathfrak{m}_T$ and $u_0 \in C([-1, 0])$, the equation~\eqref{eq:IVP-C-ODEs} has a unique Carath\'{e}odory type solution. 
\end{proposition}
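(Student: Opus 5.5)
I would prove Proposition~\ref{prop:unigue-sol-in-C} by recasting the integral equation~\eqref{eq:int-variant} as a fixed-point problem on the complete affine space $C_{u_0}([0,T])$ and applying the Banach fixed point theorem with a Bielecki-type weighted norm. For $v\in C_{u_0}([0,T])$ let $\hat v\in C([-1,T])$ denote the extension equal to $u_0$ on $[-1,0]$ and to $v$ on $[0,T]$; this is continuous because $v(0)=u_0(0)$. Define the continuation operator
\begin{equation*}
    (\Phi v)(t) := u_0(0) + \int_0^t \langle \hat v_\tau, \mu_\tau\rangle \,\dd\tau, \qquad t\in[0,T].
\end{equation*}
By the first auxiliary lemma (measurability of $t\mapsto\langle z_t,\mu_t\rangle$), the integrand lies in $L_\infty([0,T])$ with bound $K\|\hat v\|_{C([-1,T])}$. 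Hence $\Phi v$ is Lipschitz, in particular continuous, and $(\Phi v)(0)=u_0(0)$, so $\Phi$ maps $C_{u_0}([0,T])$ into itself. Solutions of~\eqref{eq:IVP-C-ODEs} in the sense of the definition correspond exactly to fixed points of $\Phi$, via $z=\hat v$.

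For the contraction estimate I would equip $C([0,T])$ with the equivalent norm $\|w\|_\lambda:=\sup_{t\in[0,T]}e^{-\lambda t}|w(t)|$ for $\lambda>0$ to be chosen; $C_{u_0}([0,T])$ remains complete under the induced metric. For $v,w\in C_{u_0}([0,T])$ the extensions agree on $[-1,0]$, so $\hat v-\hat w$ vanishes there. Since $\|\mu_\tau\|_{\mathrm{TV}}\le K$, we get
\begin{equation*}
    |\langle \hat v_\tau-\hat w_\tau,\mu_\tau\rangle| \le K\sup_{s\in[-1,0]}|(\hat v-\hat w)(\tau+s)| \le K\sup_{r\in[0,\tau]}|v(r)-w(r)| \le K e^{\lambda\tau}\|v-w\|_\lambda,
\end{equation*}
where the last step uses that $e^{\lambda r}\le e^{\lambda\tau}$ for $r\le\tau$. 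Integrating over $[0,t]$ gives $|(\Phi v-\Phi w)(t)|\le (K/\lambda)e^{\lambda t}\|v-w\|_\lambda$. Therefore $\|\Phi v-\Phi w\|_\lambda\le (K/\lambda)\|v-w\|_\lambda$, and choosing $\lambda=2K$ makes $\Phi$ a $\tfrac12$-contraction on the whole interval $[0,T]$.

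The Banach fixed point theorem then yields a unique fixed point $v^*$, and $z:=\hat v^*$ is the unique solution. Uniqueness transfers back because any solution $z$ restricts to a fixed point $z|_{[0,T]}\in C_{u_0}([0,T])$. As a remark, $z$ is absolutely continuous on $[0,T]$ and satisfies the differential equation almost everywhere by the Lebesgue differentiation theorem, which matches the Carath\'eodory interpretation.

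The only genuinely delicate point is the well-definedness of $\Phi$, namely measurability of $\tau\mapsto\langle\hat v_\tau,\mu_\tau\rangle$ under the mere weak-* measurability assumption~\ref{A2-ODEs}. This is exactly what the auxiliary lemma supplies. Beyond that, the one step requiring care is the handling of the delay in the weighted estimate: the supremum over the history segment must be bounded by the weighted norm at the current time, and this works because the weight is monotone and the segment difference vanishes on negative times.
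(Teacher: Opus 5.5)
Your proposal is correct and follows the paper's own argument: the paper also obtains the solution as the unique fixed point of the continuation operator $\mathfrak{G}_\mu$ on $C_{u_0}([0,T])$. It uses the Bielecki metric $d_\kappa$ to get the $\mu$-independent contraction constant $K/\kappa$ for $\kappa>K$ (Proposition~\ref{prop:contraction-ODEs}), and your checks of well-definedness via the measurability lemma and of the self-mapping property simply make explicit what the paper leaves implicit.
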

 The proof is based on a contraction argument.  The solution is obtained as a fixed point of the contraction mapping $\mathfrak{G}_{\mu}$ of $\{z\in C([0,T]) : z(0)=u_0(0)\}\vcentcolon ={C_{u_0}([0,T])}$ into itself defined as
\begin{equation}
\label{eq:Gothic-G-ODEs}
  \begin{aligned}
  \mathfrak{G}_{\mu}(z)[t] \vcentcolon= {} u_0(0)+\int_0^t \int_{[-1,0]} z(\tau+s)  \, \dd \mu_{\tau}(s) \,\dd\tau,
  \end{aligned}
\end{equation}
where $0 \le t \le T$.
The operator depends on the entire prescribed history $u_0$,
whereas the space $C_{u_0}([0,T])$ depends only on its value
at zero. Throughout arguments in which $u_0$ is fixed,
we write $\mathfrak G_{\mu}$ instead of
$\mathfrak G_{\mu,u_0}$.
Whenever a continuation $z$ is evaluated at a negative
argument, it is understood to be extended by $u_0$ on
$[-1,0]$. We use the same convention for
$z\in AC_{u_0}([0,T])$.
Until revoking, $z$ stands for a generic function in $C$. Recall the metric $d_{\kappa}$ on $C([0, T])$ by the formula
\begin{equation*}
    d_{\kappa}(u, v) \vcentcolon = \sup{\{\, e^{-{\kappa}t} |u(t) - v(t)| : t \in [0, T] \,\}}, \quad u, v \in C([0,T]),
\end{equation*}
where $\kappa > 0$ is a positive number (to be chosen later).  The metric $d_{\kappa}$ is complete and equivalent to the metric induced by the usual norm on $C([0,T])$.

\begin{proposition}
\label{prop:contraction-ODEs}
There exists $\kappa > 0$ such that for any $\mu\in \mathfrak{m}_T$, $u_0 \in C([-1, 0])$ the operator $\mathfrak{G}_{\mu}$ defined on the closed set $C_{u_0}([0,T])$ into itself is a contraction map in the $d_{\kappa}$ metric, with the contraction coefficient independent of $\mu$ and $u_0$.
\end{proposition}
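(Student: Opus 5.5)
First we check that $\mathfrak G_\mu$ maps $C_{u_0}([0,T])$ into itself. For $z\in C_{u_0}([0,T])$ we extend $z$ by $u_0$ on $[-1,0]$. This extension is continuous on $[-1,T]$ because $z(0)=u_0(0)$. By the first auxiliary lemma, the integrand $\tau\mapsto\langle z_\tau,\mu_\tau\rangle$ lies in $L_\infty([0,T])$ with bound $K\|z\|_{C([-1,T])}$. Hence $t\mapsto\mathfrak G_\mu(z)[t]$ is well defined and Lipschitz, in particular continuous. Since $\mathfrak G_\mu(z)[0]=u_0(0)$, it belongs to $C_{u_0}([0,T])$. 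This set is a closed subset of $C([0,T])$, so it is complete under $d_\kappa$, which is equivalent to the sup metric.

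For the contraction estimate, take $u,v\in C_{u_0}([0,T])$, both extended by the same $u_0$. Then $w:=u-v$ vanishes on $[-1,0]$. For every $\tau\in[0,T]$ and $s\in[-1,0]$, either $\tau+s\le 0$, so $w(\tau+s)=0$, or $\tau+s\in[0,\tau]$. In the second case
\begin{equation*}
|w(\tau+s)|\le e^{\kappa(\tau+s)}d_\kappa(u,v)\le e^{\kappa\tau}d_\kappa(u,v).
\end{equation*}
Thus $\sup_{s}|w(\tau+s)|\le e^{\kappa\tau}d_\kappa(u,v)$ for all $\tau$, and
\begin{equation*}
|\langle w_\tau,\mu_\tau\rangle|\le \|\mu_\tau\|_{\mathrm{TV}}\,e^{\kappa\tau}d_\kappa(u,v)\le K e^{\kappa\tau}d_\kappa(u,v).
\end{equation*}
By linearity of the pairing, $\mathfrak G_\mu(u)[t]-\mathfrak G_\mu(v)[t]=\int_0^t\langle w_\tau,\mu_\tau\rangle\,\dd\tau$. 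Integrating the bound above gives
\begin{equation*}
e^{-\kappa t}|\mathfrak G_\mu(u)[t]-\mathfrak G_\mu(v)[t]|\le e^{-\kappa t}K\frac{e^{\kappa t}-1}{\kappa}\,d_\kappa(u,v)\le \frac{K}{\kappa}\,d_\kappa(u,v).
\end{equation*}
Taking the supremum over $t\in[0,T]$ and choosing, for instance, $\kappa=2K$ gives contraction coefficient $1/2$. This coefficient depends only on $K$, and so is independent of $\mu$, $u_0$ and even $T$.

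The only delicate step is the pointwise bound on the delayed argument. It requires that $u$ and $v$ share the same history $u_0$, so that the difference on $[-1,0]$ vanishes, and it uses the monotonicity $e^{\kappa(\tau+s)}\le e^{\kappa\tau}$ for $s\le 0$. Measurability of the integrands is already secured by the first auxiliary lemma, so there is no further obstacle.
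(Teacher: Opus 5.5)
Your proposal is correct and follows essentially the same route as the paper: bound $|z(\tau+s)-w(\tau+s)|\le e^{\kappa\tau}d_\kappa(z,w)$, integrate against $|\mu_\tau|$ to get $K(e^{\kappa t}-1)/\kappa$, and take the weighted supremum to obtain the constant $K/\kappa<1$ for $\kappa>K$. You also spell out two points the paper leaves implicit: the self-mapping property of $\mathfrak G_\mu$ on $C_{u_0}([0,T])$, and the fact that the difference vanishes on $[-1,0]$ together with $e^{\kappa(\tau+s)}\le e^{\kappa\tau}$ for $s\le 0$.
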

\begin{proof}
    \begin{equation*}
        \begin{split}
             |\mathfrak{G}_{\mu}(z)[t]   -  \mathfrak{G}_{\mu}(w)[t] | &  \le  \int_{0}^{t}  \int_{[-1,0]} | z(\tau+s) -  w(\tau+s) | \, \dd |\mu_{\tau}|(s)  \, \dd\tau  \\
    & \le   \int_{0}^{t}  \int_{[-1,0]} e^{\kappa \tau} d_{\kappa}(z,w)  \, \dd |\mu_{\tau}|(s) \,\dd\tau \\
    & \le  K  d_{\kappa}(z,w) \int_{0}^{t}  e^{\kappa \tau} \,\dd\tau  \\
    & = K \frac{(e^{\kappa t}-1)}{\kappa} d_{\kappa}(z,w). 
        \end{split}
    \end{equation*}
Therefore, 
\begin{align*}  d_{\kappa}(\mathfrak{G}_{\mu}(z) , \mathfrak{G}_{\mu}(w)) & = \sup{\{\, e^{-{\kappa}t}  |\mathfrak{G}_{\mu}(z)[t]   - \mathfrak{G}_{\mu}(w)[t] | : t \in [0, T] \,\}} \\
        &\le \sup{\Big\{\,  K\frac{(1-e^{-{\kappa}t})}{\kappa} : t \in [0, T] \,\Big\}} \, d_{\kappa}(z,w)\\
        &\le \frac{K}{\kappa} d_{\kappa}(z,w). 
\end{align*}
For sufficiently large $\kappa>K$ the Contraction Mapping Principle guarantees the existence and uniqueness of the fixed point $z$ of $\mathfrak{G}_{\mu}$, which is then, after extending to $[-1, 0]$ by $u_0$, the unique  Carath\'{e}odory type solution of~\eqref{eq:IVP-C-ODEs} on $[-1, T]$ satisfying the initial condition.
\end{proof}

\begin{proposition}
\label{prop:G_operator_continuous}
   Let $u_0\in C([-1, 0])$. We have the following.
   \begin{enumerate}[label=\textup{(}\roman*\textup{)}, ref=\textup{(}\emph{\roman*}\textup{)}]
   \item\label{prop:G_operator_continuous:i}
   For any $\mu\in\mathfrak{m}_T$ the mapping
   \begin{equation*}
           C_{u_0}([0,T]) \ni z
           \mapsto \mathfrak{G}_{\mu}(z)
           \in C_{u_0}([0,T])
   \end{equation*}
   is continuous.

   \item\label{prop:G_operator_continuous:ii}
   For any $z\in C_{u_0}([0,T])$ the mapping
   \begin{equation*}
           \mathfrak{m}_T \ni \mu
           \mapsto \mathfrak{G}_{\mu}(z)
           \in C_{u_0}([0,T])
   \end{equation*}
   is sequentially continuous.

   \item\label{prop:G_operator_continuous:iii}
   \begin{equation*}
           C_{u_0}([0,T]) \times \mathfrak{m}_T
           \ni (z,\mu)
           \mapsto \mathfrak{G}_{\mu}(z)
           \in C_{u_0}([0,T])
   \end{equation*}
   is jointly sequentially continuous.
   \end{enumerate}
\end{proposition}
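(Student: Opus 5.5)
For \ref{prop:G_operator_continuous:i} I will use the linear structure of $\mathfrak{G}_\mu$ directly. The estimate in the proof of Proposition~\ref{prop:contraction-ODEs} gives
\begin{equation*}
    \|\mathfrak{G}_\mu(z)-\mathfrak{G}_\mu(w)\|_{C([0,T])}\le KT\,\|z-w\|_{C([0,T])}
\end{equation*}
for $z,w\in C_{u_0}([0,T])$. The reason is that $z(\tau+s)-w(\tau+s)$ vanishes whenever $\tau+s\le 0$, since both functions are extended by $u_0$ there. So $\mathfrak{G}_\mu$ is Lipschitz, uniformly in $\mu$.

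For \ref{prop:G_operator_continuous:ii} I fix $z$ and let $\mu^{(n)}\to\mu$, meaning $\mu^{(n)}_\tau\overset{*}{\rightharpoonup}\mu_\tau$ for a.e.\ $\tau$. Put $g_n(\tau):=\langle z_\tau,\mu^{(n)}_\tau\rangle$ and $g(\tau):=\langle z_\tau,\mu_\tau\rangle$, where $z$ is extended by $u_0$ so that $z_\tau\in C$. By the first auxiliary lemma these are $L_\infty$ functions, and $|g_n|,|g|\le K\|z\|_{C([-1,T])}$. For each $\tau$ outside the exceptional null set, $z_\tau$ is a fixed element of $C$, so weak-* convergence gives $g_n(\tau)\to g(\tau)$. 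Dominated convergence on $[0,T]$ then yields $\int_0^T|g_n-g|\,\dd\tau\to0$. This bound controls $\sup_{t}|\mathfrak{G}_{\mu^{(n)}}(z)[t]-\mathfrak{G}_\mu(z)[t]|$, which is the required convergence in $C([0,T])$.

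For \ref{prop:G_operator_continuous:iii} I take $(z^{(n)},\mu^{(n)})\to(z,\mu)$ and split
\begin{equation*}
\|\mathfrak{G}_{\mu^{(n)}}(z^{(n)})-\mathfrak{G}_\mu(z)\|\le \|\mathfrak{G}_{\mu^{(n)}}(z^{(n)})-\mathfrak{G}_{\mu^{(n)}}(z)\|+\|\mathfrak{G}_{\mu^{(n)}}(z)-\mathfrak{G}_\mu(z)\|.
\end{equation*}
The first term is at most $KT\|z^{(n)}-z\|$, by the uniform Lipschitz bound from \ref{prop:G_operator_continuous:i}, and the second term tends to zero by \ref{prop:G_operator_continuous:ii}.

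The main point is the uniform TV bound $K$. It supplies both the $\mu$-independent Lipschitz constant and the domination needed in \ref{prop:G_operator_continuous:ii}. I expect no real obstacle beyond this.
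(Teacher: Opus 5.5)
Your proposal is correct and follows essentially the same argument as the paper. The paper splits the difference in the same way and bounds the first term by $KT\|z_m-z\|_{C([0,T])}$ using the uniform total variation bound. It handles the second term by dominated convergence, using almost-everywhere weak-* convergence and the constant majorant $2K\max\{\|u_0\|_{C([-1,0])},\|z\|_{C([0,T])}\}$. The only difference is bookkeeping: the paper proves (iii) directly and treats (i) and (ii) as special cases, whereas you prove (i) (as a $\mu$-uniform Lipschitz bound) and (ii) first and then combine them.
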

\begin{proof}
    It suffices to show only the last part (i.e.,~\ref{prop:G_operator_continuous:iii}). Fix $z\in C_{u_0}([0,T])$ and $\mu \in  \mathfrak{m}_T$. Let  $(z_m)_{m=1}^{\infty}\subset C_{u_0}([0,T])$ and $(\mu^{(m)})_{m=1}^{\infty}\subset \mathfrak{m}_T$ converge to $z$ and $\mu$, respectively. We can estimate the norm of the difference as follows
\begin{multline*}
    \|\mathfrak{G}_{\mu^{(m)}}(z_m) - \mathfrak{G}_\mu(z)\|_{C([0,T])} \\ \le \|\mathfrak{G}_{\mu^{(m)}}(z_m) - \mathfrak{G}_{\mu^{(m)}}(z)\|_{C([0,T])} + \|\mathfrak{G}_{\mu^{(m)}}(z) - \mathfrak{G}_\mu(z)\|_{C([0,T])}.
\end{multline*}
Let us estimate the first term on the right-hand side. By the definition of the operator $\mathfrak{G}$ and the fact that the sequence of measures $(\mu^{(m)})_{m=1}^\infty$ is uniformly bounded in $\mathfrak{m}_T$ we obtain 
\begin{equation*}
\begin{split}
    \|\mathfrak{G}_{\mu^{(m)}}(z_m) -   \mathfrak{G}_{\mu^{(m)}}(z)&\|_{C([0,T])}  \\ & =  \sup_{t \in [0,T]} \Big| \int_0^t \int_{[-1,0]} (z_m(\tau+s) - z(\tau+s)) \, \dd \mu^{(m)}_{\tau}(s) \, \dd \tau \Big| \\
&\le \sup_{t \in [0,T]} \int_0^t \|\mu^{(m)}_{\tau}\| \sup_{\xi \in [\tau-1, \tau]} |z_m(\xi) - z(\xi)| \,\dd\tau \\
&\le KT  \|z_m - z\|_{C([0,T])}.
\end{split}
\end{equation*}
Since $z_m \to z$ in $C_{u_0}([0,T])$, we have $\|z_m - z\|_{C([0,T])} \to 0$ as $m \to \infty$. Consequently, the first term converges to zero. Convergence of the second term follows from the Lebesgue Dominated Convergence Theorem. We have 
\begin{equation*}
    \begin{split}
        \|\mathfrak{G}_{\mu^{(m)}}(z) - \mathfrak{G}_\mu(z)\|_{C([0,T])} &\le \sup_{t \in [0,T]} \int_0^t \Big| \int_{[-1,0]} z(\tau+s) \, \dd(\mu^{(m)}_{\tau} - \mu_\tau)(s) \Big| \,\dd\tau \\
&\le \int_0^T \Big| \int_{[-1,0]} z(\tau+s) \,\dd(\mu^{(m)}_{\tau} - \mu_\tau)(s) \Big|\, \dd\tau.
    \end{split}
\end{equation*}
Since $\mu^{(m)} \to \mu$ (i.e., $\mu^{(m)}_{\tau} \to \mu_{\tau}$ in *-weak topology for almost every $\tau \in [0,T]$) the inner integral converges to zero for almost every $\tau \in [0,T]$, meaning
\begin{equation*}
    \int_{[-1,0]} z(\tau+s)\, \dd(\mu^{(m)}_{\tau} - \mu_\tau)(s)  \to 0 \quad \text{for a.e. } \tau \in [0,T].
\end{equation*}
Moreover, the sequence of measures is uniformly bounded, so we have 
\begin{equation*}
    \Big|\int_{[-1,0]} z(\tau+s)\, \dd(\mu^{(m)}_{\tau} - \mu_\tau)(s)  \Big|  \le 2K\max\{\|u_0\|_{C([-1,0])},\|z\|_{C([0,T])}\}.
\end{equation*}
The bounding constant $2K\max\{\|u_0\|_{C([-1,0])},\|z\|_{C([0,T])}\}$ is trivially integrable over the bounded interval $[0,T]$. Therefore, we can apply the Lebesgue Dominated Convergence Theorem to conclude that:
\begin{equation*}
     \|\mathfrak{G}_{\mu^{(m)}}(z) - \mathfrak{G}_\mu(z)\|_{C([0,T])} \to 0
\end{equation*}
which finishes the second part and simultaneously completes the proof.
\end{proof}

\begin{proposition}
    \label{prop:G_operator_continuous-AC}
   For any $u_0\in C([-1, 0])$
       \begin{equation*}
          C_{u_0}([0,T]) \times \mathfrak{m}_T \ni (z, \mu)  \mapsto \mathfrak{G}_{\mu}(z) \in AC_{u_0}([0,T]) 
    \end{equation*}
    is jointly sequentially continuous. 
\end{proposition}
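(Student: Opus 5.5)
The plan is to reduce the $AC$-norm of the difference to an $L_1$-norm of derivatives, and then repeat the two-term splitting used in the proof of Proposition~\ref{prop:G_operator_continuous}.

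First, $\mathfrak G_\mu(z)$ does lie in $AC_{u_0}([0,T])$. By the first auxiliary lemma, $\tau\mapsto\langle z_\tau,\mu_\tau\rangle$ belongs to $L_\infty([0,T])$, with $z$ extended by $u_0$ on $[-1,0]$. So $\mathfrak G_\mu(z)$ is an indefinite Lebesgue integral of an integrable function. It is therefore absolutely continuous, it equals $u_0(0)$ at $t=0$, and its a.e. derivative is $\tau\mapsto\int_{[-1,0]}z(\tau+s)\,\dd\mu_\tau(s)$.

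Second, for two such images the values at $0$ coincide, so the $AC$-norm reduces to
\begin{equation*}
\|\mathfrak G_{\mu^{(m)}}(z_m)-\mathfrak G_\mu(z)\|_{AC([0,T])}
=\int_0^T\Big|\int_{[-1,0]}z_m(\tau+s)\,\dd\mu^{(m)}_\tau(s)-\int_{[-1,0]}z(\tau+s)\,\dd\mu_\tau(s)\Big|\,\dd\tau .
\end{equation*}
Take sequences $z_m\to z$ in $C_{u_0}([0,T])$ and $\mu^{(m)}\to\mu$, meaning weak-* convergence for a.e. $\tau$. Split the integrand by adding and subtracting $\int z(\tau+s)\,\dd\mu^{(m)}_\tau(s)$.

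The first piece is bounded pointwise in $\tau$ by $\|\mu^{(m)}_\tau\|_{\mathrm{TV}}\sup_{\xi\in[\tau-1,\tau]}|z_m(\xi)-z(\xi)|$. The extensions of $z_m$ and $z$ agree on $[-1,0]$, since both equal $u_0$ there. Hence this piece is at most $K\|z_m-z\|_{C([0,T])}$, and its integral is at most $KT\|z_m-z\|_{C([0,T])}\to0$.

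The second piece is $\big|\int_{[-1,0]} z_\tau\,\dd(\mu^{(m)}_\tau-\mu_\tau)\big|$. For a.e. $\tau$ it tends to zero, because $z_\tau\in C([-1,0])$ is a fixed test function and $\mu^{(m)}_\tau\overset{*}{\rightharpoonup}\mu_\tau$. It is also dominated by $2K\max\{\|u_0\|_{C([-1,0])},\|z\|_{C([0,T])}\}$, which is integrable on $[0,T]$. The Lebesgue Dominated Convergence Theorem then shows that its integral tends to zero.

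The argument is the same as before. The one genuine difference is that the supremum over $t$ of an integral is replaced by the $L_1$-norm of the integrand. The dominated convergence bound in Proposition~\ref{prop:G_operator_continuous} already estimated exactly this quantity, so there is no real obstacle beyond checking membership in $AC$ and the measurability of the derivative, which the earlier lemma provides.
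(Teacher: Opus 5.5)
Your proposal is correct and follows essentially the same route as the paper: the $z_m\to z$ part is bounded by $KT\|z_m-z\|_{C([0,T])}$, and the $\mu^{(m)}\to\mu$ part is handled by dominated convergence, with the $AC$-norm of the difference reducing to the $L_1$-norm of the derivatives. Your explicit check that $\mathfrak G_\mu(z)\in AC_{u_0}([0,T])$, and that the values at $0$ coincide, is a small addition that the paper leaves implicit.
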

\begin{proof}
 Fix $z\in C_{u_0}([0,T])$ and $\mu \in  \mathfrak{m}_T$. Let  $(z_m)_{m=1}^{\infty}\subset C_{u_0}([0,T])$ and $(\mu^{(m)})_{m=1}^{\infty}\subset \mathfrak{m}_T$ converge to $z$ and $\mu$, respectively. We can estimate the norm of the difference as follows
  \begin{equation*}
\begin{split}
        \|\mathfrak{G}_{\mu^{(m)}}(z_m) - & \mathfrak{G}_\mu(z)\|_{AC([0,T])}   \\ 
        & \le \|\mathfrak{G}_{\mu^{(m)}}(z_m) - \mathfrak{G}_{\mu^{(m)}}(z)\|_{AC([0,T])} + \|\mathfrak{G}_{\mu^{(m)}}(z) - \mathfrak{G}_\mu(z)\|_{AC([0,T])}.
\end{split}
\end{equation*}
Proceeding similarly to the proof of Proposition~\ref{prop:G_operator_continuous}, we have
\begin{equation*}
\begin{split}
        \|\mathfrak{G}_{\mu^{(m)}}(z_m) - \mathfrak{G}_{\mu^{(m)}}(z)\|_{AC([0,T])} & = \int_0^T \Big| \int_{[-1,0]} (z_m(t+s) - z(t+s)) \,\dd\mu^{(m)}_{t}(s) \Big| \, \dd t \\ 
        & \le  KT  \|z_m - z\|_C.
\end{split}
\end{equation*}
Since $z_m \to z$ in $C$ we have $\|z_m - z\|_C \to 0$. Hence, the first term converges to zero. The second term is exactly the integral we analyzed previously (see Proposition~\ref{prop:G_operator_continuous}). 
\end{proof}

As a consequence of Proposition~\ref{prop:contraction-ODEs}, Proposition~\ref{prop:G_operator_continuous} and Theorem~\ref{thm:seq_continuous_dependence_fixed_point} we have the following.   
\begin{theorem}\label{thrm:C-cnontin-sol} For any $u_0\in C$ the mapping 
    \begin{equation*} 
         \mathfrak{m}_T \ni \mu \mapsto z(\cdot,\mu,u_0)\in C([-1, T])  
    \end{equation*}
    is sequentially continuous. 
\end{theorem}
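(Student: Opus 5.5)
The plan is to apply Theorem~\ref{thm:seq_continuous_dependence_fixed_point} with $X=C_{u_0}([0,T])$, $S=\mathfrak m_T$ and $F(z,\mu)=\mathfrak G_{\mu}(z)$, and then pass from the fixed point on $[0,T]$ to the solution on $[-1,T]$.

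For the metric we take $d_\kappa$, with $\kappa>K$ chosen as in Proposition~\ref{prop:contraction-ODEs}. Then $(X,d_\kappa)$ is complete, being a closed affine subset of $C([0,T])$ with an equivalent complete metric. By Proposition~\ref{prop:contraction-ODEs}, each $\mathfrak G_\mu$ is a $\varkappa$-contraction with $\varkappa=K/\kappa<1$, and this constant is uniform in $\mu$. This gives the second hypothesis of Theorem~\ref{thm:seq_continuous_dependence_fixed_point}.

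For the first hypothesis, Proposition~\ref{prop:G_operator_continuous}\ref{prop:G_operator_continuous:ii} says that $\mu\mapsto\mathfrak G_\mu(z)$ is sequentially continuous into $C([0,T])$ with the sup norm. Since $d_\kappa$ is equivalent to the sup norm, with $e^{-\kappa T}\|\cdot\|\le d_\kappa\le\|\cdot\|$, the same holds for $d_\kappa$. One point needs care: $\mathfrak m_T$ is regarded as carrying the a.e. weak-* sequential convergence structure rather than a topology. The proof of Theorem~\ref{thm:seq_continuous_dependence_fixed_point} only manipulates sequences, so it applies verbatim. Concretely, if $\mu^{(n)}\to\mu$ and $x^n$, $x$ are the fixed points, then
\begin{equation*}
d_\kappa(x^n,x)\le d_\kappa\bigl(\mathfrak G_{\mu^{(n)}}(x^n),\mathfrak G_{\mu^{(n)}}(x)\bigr)+d_\kappa\bigl(\mathfrak G_{\mu^{(n)}}(x),\mathfrak G_\mu(x)\bigr),
\end{equation*}
and therefore
\begin{equation*}
d_\kappa(x^n,x)\le\frac{1}{1-\varkappa}\,d_\kappa\bigl(\mathfrak G_{\mu^{(n)}}(x),\mathfrak G_\mu(x)\bigr)\to0 .
\end{equation*}
I would include this short estimate explicitly, so that no topology on $\mathfrak m_T$ is needed.

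Finally, the solution $z(\cdot;\mu,u_0)$ equals $u_0$ on $[-1,0]$ and equals the fixed point $x^\mu$ on $[0,T]$. Hence
\begin{equation*}
\|z(\cdot;\mu^{(n)},u_0)-z(\cdot;\mu,u_0)\|_{C([-1,T])}=\|x^{\mu^{(n)}}-x^\mu\|_{C([0,T])}\to0 .
\end{equation*}
No real obstacle is expected. The only subtle step is the one above: the convergence notion on $\mathfrak m_T$ is sequential rather than topological, which the direct sequential estimate handles.
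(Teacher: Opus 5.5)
Your proposal is correct and follows the paper's own route: the paper obtains this theorem directly from the $\mu$-uniform contraction in Proposition~\ref{prop:contraction-ODEs}, the sequential continuity of $\mu\mapsto\mathfrak G_\mu(z)$ in Proposition~\ref{prop:G_operator_continuous}, and the sequential uniform contraction principle of Theorem~\ref{thm:seq_continuous_dependence_fixed_point}. Your explicit estimate $d_\kappa(x^n,x)\le(1-\varkappa)^{-1}d_\kappa\bigl(\mathfrak G_{\mu^{(n)}}(x),\mathfrak G_\mu(x)\bigr)$ and the passage from $[0,T]$ to $[-1,T]$ are details the paper leaves implicit. The estimate is worth keeping, because the a.e.\ weak-* convergence on $\mathfrak m_T$ is not assumed to come from a topology, which is what Theorem~\ref{thm:seq_continuous_dependence_fixed_point} formally requires.
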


Moreover, the integral identity~\eqref{eq:int-variant}
shows that the unique solution $z(\cdot,\mu,u_0)$ is
absolutely continuous on $[0,T]$.
However, Theorem~\ref{thrm:C-cnontin-sol} establishes
sequential continuity with respect to the parameters
only in the supremum norm. This does not automatically
imply sequential continuity in the stronger
$AC([0,T])$ norm, which also requires convergence
of the derivatives in $L_1([0,T])$.
The following results establish this stronger
parameter dependence.

\begin{theorem}\label{Thrm:familie-uniform-banach-uniform}
Let $(X,d)$ be a nonempty complete metric space and let $S$
be a topological space. Suppose that $(d_s)_{s\in S}$
is a family of metrics on $X$ satisfying
\begin{equation*}
    c\,d(x,y)\leq d_s(x,y)\leq C\,d(x,y),
    \qquad x,y\in X,\quad s\in S,
\end{equation*}
for constants $c,C>0$ independent of $s$.
Let $F:X\times S\to X$ satisfy
\begin{equation*}
    d_s(F(x,s),F(y,s))
    \leq \varkappa\,d_s(x,y),
    \qquad x,y\in X,\quad s\in S,
\end{equation*}
for some $\varkappa\in[0,1)$. Then each $F(\cdot,s)$ has a unique fixed point $x^s$.
If $F(x,\cdot):S\to(X,d)$ is continuous for every $x\in X$,
then $s\mapsto x^s$ is continuous.
The same holds with continuity replaced
throughout by sequential continuity.
\end{theorem}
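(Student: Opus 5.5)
First I would prove existence and uniqueness for each fixed $s$. Since $c\,d\le d_s\le C\,d$, the metric $d_s$ is equivalent to $d$. A $d_s$-Cauchy sequence is therefore $d$-Cauchy, and its $d$-limit is also a $d_s$-limit. Hence $(X,d_s)$ is complete. The map $F(\cdot,s)$ is a $\varkappa$-contraction in $d_s$, so the Banach fixed point theorem gives a unique fixed point $x^s$. Uniqueness does not depend on the metric used.

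For continuity, I would not try to reduce the claim to Theorem~\ref{thm:continuous_dependence_fixed_point}, because the metric changes with $s$. Instead I would use the standard fixed-point estimate directly. Fix $s_0\in S$ and set $x_0:=x^{s_0}$. For any $s\in S$, since $x^s=F(x^s,s)$ and $x_0=F(x_0,s_0)$,
\begin{equation*}
d_s(x^s,x_0)\le d_s\big(F(x^s,s),F(x_0,s)\big)+d_s\big(F(x_0,s),F(x_0,s_0)\big)\le \varkappa\, d_s(x^s,x_0)+C\,d\big(F(x_0,s),x_0\big).
\end{equation*}
Rearranging and using $c\,d\le d_s$ gives
\begin{equation*}
d(x^s,x^{s_0})\le \frac{C}{c(1-\varkappa)}\, d\big(F(x^{s_0},s),F(x^{s_0},s_0)\big).
\end{equation*}
The constant on the right is uniform in $s$; this is exactly where the two-sided bound with constants independent of $s$ is needed. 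This is the only delicate point, since contractivity holds in $d_s$ while continuity in $s$ is measured in $d$.

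Continuity of $s\mapsto x^s$ at $s_0$ now follows. Given $\varepsilon>0$, continuity of $F(x^{s_0},\cdot)\colon S\to(X,d)$ at $s_0$ gives a neighbourhood of $s_0$ on which the right-hand side is below $\varepsilon$. For the sequential version, take $s_n\to s_0$ and apply the same inequality with $s=s_n$. Sequential continuity of $F(x^{s_0},\cdot)$ makes the right-hand side tend to zero, so $x^{s_n}\to x^{s_0}$ in $(X,d)$. No metrizability of $S$ is needed, because the estimate is pointwise in $s$.
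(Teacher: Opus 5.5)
Your proposal is correct and follows essentially the same route as the paper: completeness of $(X,d_s)$ from the uniform metric equivalence, then the estimate $d(x^s,x^{s_0})\le \frac{C}{c(1-\varkappa)}\,d\big(F(x^{s_0},s),F(x^{s_0},s_0)\big)$ combined with continuity of $F(x^{s_0},\cdot)$ at $s_0$. Your explicit treatment of the sequential case via $s=s_n$ fills in what the paper only asserts with ``the same holds.''
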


\begin{proof}
The metric inequalities imply that $(X,d_s)$ is complete
for every $s\in S$. Hence the Banach fixed-point theorem
provides a unique fixed point $x^s\in X$ satisfying $F(x^s,s)=x^s$.  Fix $s_0\in S$. By the fixed-point identities and the
contraction estimate,
\begin{equation*}
    \begin{aligned}
        d_s(x^s,x^{s_0})
        &\leq
        d_s(F(x^s,s),F(x^{s_0},s))
        +
        d_s(F(x^{s_0},s),F(x^{s_0},s_0))\\
        &\leq
        \varkappa\,d_s(x^s,x^{s_0})
        +
        C\,d(F(x^{s_0},s),F(x^{s_0},s_0)).
    \end{aligned}
\end{equation*}
Rearranging and using $c\,d\leq d_s$, we obtain
\begin{equation*}
    d(x^s,x^{s_0})
    \leq
    \frac{C}{c(1-\kappa)}
    d(F(x^{s_0},s),F(x^{s_0},s_0)).
\end{equation*}
Let $\varepsilon>0$. By continuity of
$F(x^{s_0},\cdot)$, there exists an open neighbourhood
$V$ of $s_0$ such that
\begin{equation*}
    d(F(x^{s_0},s),F(x^{s_0},s_0))
    <
    \frac{c(1-\kappa)}{C}\,\varepsilon,
    \qquad s\in V.
\end{equation*}
The preceding estimate therefore gives $d(x^s,x^{s_0})<\varepsilon$ for $s\in V$. 
Thus, the fixed-point map is continuous at $s_0$.
Since $s_0$ was arbitrary, it is continuous on $S$.
\end{proof}

For fixed $\mu\in\mathfrak{m}_T$ we will assume that $AC([0,T])$ is equipped with the Sobolev--Bielecki type norm defined as 
\begin{equation*}
    \| x \|_{\kappa,\mu, AC([0,T])} := |x(0)| + \int_{0}^T |x'(t)| \exp\Big(-\kappa \int_0^t \|\mu_s\|_{\mathrm{TV}}\,\dd s\Big)\,  \dd t 
\end{equation*}
where as before $\kappa > 0$ is a positive number (to be chosen later). Since we assume $\mu\in \mathfrak{m}_T$ we have 
\begin{equation*}
    \exp(-\kappa K T)\le \exp\Big(-\kappa \int_0^t \|\mu_s\|_{\mathrm{TV}}\,\dd s \Big)\le 1 
\end{equation*}
for every $t\in[0,T]$. Therefore, the norm $\| x \|_{\kappa,\mu, AC([0,T])}$ is equivalent to the standard one. 
\begin{proposition}
\label{prop:contraction-ODEs-on-AC}
There exists $\kappa > 0$ such that for any $\mu\in \mathfrak{m}_T$, $u_0 \in C([-1, 0])$ the operator $\mathfrak{G}_{\mu}$ defined on the closed set $AC_{u_0}([0,T])$ into itself is a contraction map in the $\|\cdot\|_{\kappa, \mu, AC}$ norm, with the contraction coefficient independent of $\mu$ and $u_0$.
\end{proposition}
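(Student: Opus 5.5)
We show that $\mathfrak G_\mu$ maps $AC_{u_0}([0,T])$ into itself and then estimate the weighted $L_1$ norm of the derivative of $\mathfrak G_\mu(z)-\mathfrak G_\mu(w)$ by a Fubini interchange.

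\emph{Invariance.} Take $z\in AC_{u_0}([0,T])$. The extension of $z$ by $u_0$ lies in $C([-1,T])$, so the function $\tau\mapsto\langle z_\tau,\mu_\tau\rangle$ belongs to $L_\infty([0,T])$ by the first auxiliary lemma. Hence $\mathfrak G_\mu(z)$ is absolutely continuous, and
\[
\mathfrak G_\mu(z)'(t)=\langle z_t,\mu_t\rangle
\]
for a.e.\ $t$. Moreover $\mathfrak G_\mu(z)(0)=u_0(0)$, so $\mathfrak G_\mu(z)\in AC_{u_0}([0,T])$.

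\emph{Pointwise estimate.} Let $z,w\in AC_{u_0}([0,T])$ and put $v=z-w$. Then $v(0)=0$, and $v$ vanishes on $[-1,0]$ because both functions are extended by the same $u_0$. The difference $\mathfrak G_\mu(z)-\mathfrak G_\mu(w)$ vanishes at $0$, and its derivative is $\langle v_t,\mu_t\rangle$. For $t+s\ge0$ we have $|v(t+s)|\le\int_0^{t}|v'(\sigma)|\,\dd\sigma$, and $v(t+s)=0$ when $t+s<0$. Hence
\[
|\langle v_t,\mu_t\rangle|\le \|\mu_t\|_{\mathrm{TV}}\int_0^t|v'(\sigma)|\,\dd\sigma .
\]
The factor $\|\mu_t\|_{\mathrm{TV}}$ is measurable by Lemma~\ref{lemma:mu-measurable}.

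\emph{Weighted estimate.} Write $m(t):=\int_0^t\|\mu_s\|_{\mathrm{TV}}\,\dd s$. Then $m$ is absolutely continuous and nondecreasing, with $m'(t)=\|\mu_t\|_{\mathrm{TV}}$ a.e. Multiplying the pointwise bound by $e^{-\kappa m(t)}$, integrating over $[0,T]$ and using Tonelli gives
\[
\|\mathfrak G_\mu(z)-\mathfrak G_\mu(w)\|_{\kappa,\mu,AC}
\le\int_0^T|v'(\sigma)|\int_\sigma^T m'(t)e^{-\kappa m(t)}\,\dd t\,\dd\sigma .
\]
The chain rule for the absolutely continuous function $m$ evaluates the inner integral exactly:
\[
\int_\sigma^T m'(t)e^{-\kappa m(t)}\,\dd t=\frac{e^{-\kappa m(\sigma)}-e^{-\kappa m(T)}}{\kappa}\le\frac{e^{-\kappa m(\sigma)}}{\kappa}.
\]
Since $v(0)=0$, it follows that
\[
\|\mathfrak G_\mu(z)-\mathfrak G_\mu(w)\|_{\kappa,\mu,AC}\le\frac1\kappa\int_0^T|v'(\sigma)|e^{-\kappa m(\sigma)}\,\dd\sigma=\frac1\kappa\|z-w\|_{\kappa,\mu,AC}.
\]
Choosing $\kappa>1$, for instance $\kappa=2$, gives a contraction coefficient $1/\kappa$ that is independent of $\mu$ and $u_0$.

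\emph{Completeness.} The set $AC_{u_0}([0,T])$ is closed in $AC([0,T])$. The norm $\|\cdot\|_{\kappa,\mu,AC}$ is equivalent to the standard one, with constants $e^{-\kappa KT}$ and $1$, so $AC_{u_0}([0,T])$ is complete in this norm.

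\emph{Main obstacle.} The delicate step is the weighted estimate. One must use the weight $e^{-\kappa m(t)}$ built from $\|\mu_t\|_{\mathrm{TV}}$ itself, rather than $e^{-\kappa t}$, so that the $\mathrm{d}t$-integral becomes an exact derivative. One must also justify the chain rule for the merely absolutely continuous $m$, and check that $v\equiv0$ on $[-1,0]$ so that no history term enters the estimate.
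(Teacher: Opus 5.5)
Your proof is correct and follows the paper's argument essentially line by line. It uses the same pointwise bound $\|\mu_t\|_{\mathrm{TV}}\int_0^t|z'-w'|\,\dd r$, the same exact evaluation $\int_r^T m'(t)e^{-\kappa m(t)}\,\dd t\le e^{-\kappa m(r)}/\kappa$, and a Fubini interchange giving the factor $1/\kappa$. Your explicit checks of invariance, completeness and the choice $\kappa>1$ supply details the paper leaves implicit.

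One remark in your closing paragraph is overstated: you say the weight \emph{must} be $e^{-\kappa m(t)}$ rather than $e^{-\kappa t}$. Under the uniform bound $K$, the weight $e^{-\kappa t}$ with $\kappa>K$ would also give a contraction. The $m$-weight is used here simply because it is the norm the statement prescribes.
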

\begin{proof}
    Fix $\mu$ and $u_0$ as in the statement. We start from the pointwise estimation 
    \begin{equation*}
        \begin{split}
            |(\mathfrak{G}_{\mu}z)'[t] - (\mathfrak{G}_{\mu}w)'[t]| & \le \Big| \int_{[-1,0]} z(t+s) \, \dd \mu_t(s)- \int_{[-1,0]} w(t+s) \, \dd \mu_t(s) \Big|  \\
            & \le \int_{[-1,0]} \int_0^{t+s} \vert{}z'(r) - w'(r)\vert{} \, \dd r \, \dd |\mu_t|(s) \\
            & \le \|\mu_t\|_{\mathrm{TV}} \int_0^{t} \vert{}z'(r) - w'(r)\vert{} \, \dd r
        \end{split}
    \end{equation*}
and 
\begin{multline*}
        \int_r^T \Vert{}\mu_t\Vert{}_{\text{TV}} \exp\Big(-\kappa \int_0^t \Vert{}\mu_s\Vert{}_{\text{TV}} \, \text{d}s\Big) \, \text{d}t \\ =  \frac{1}{\kappa} \exp\Big(-\kappa \int_0^r \Vert{}\mu_s\Vert{}_{\text{TV}} \, \text{d}s\Big) - \frac{1}{\kappa} \exp\Big(-\kappa \int_0^T \Vert{}\mu_s\Vert{}_{\text{TV}} \, \text{d}s\Big) \\ \le \frac{1}{\kappa} \exp\Big(-\kappa \int_0^r \Vert{}\mu_s\Vert{}_{\text{TV}} \, \text{d}s\Big). 
\end{multline*}
Therefore, via the Fubini theorem, we have 
\begin{equation*}
\begin{split}
     \Vert{}\mathfrak{G}_\mu  (z) & - \mathfrak{G}_\mu (w)\Vert{}_{\kappa, \mu, AC}  = \int_0^T \vert{}(\mathfrak{G}_\mu z)'[t] - (\mathfrak{G}_\mu w)'[t]\vert{} \exp\Big(-\kappa \int_0^t \Vert{}\mu_s\Vert{}_{\text{TV}} \, \text{d}s\Big) \, \text{d}t \\
     & \le \int_0^T \Big( \Vert{}\mu_t\Vert{}_{\text{TV}} \int_0^t \vert{}z'(r) - w'(r)\vert{} \, \text{d}r \Big) \exp\Big(-\kappa \int_0^t \Vert{}\mu_s\Vert{}_{\text{TV}} \, \text{d}s\Big) \, \text{d}t \\
     & = \int_0^T \vert{}z'(r) - w'(r)\vert{} \Big(  \int_r^T \Vert{}\mu_t\Vert{}_{\mathrm{TV}} \exp\Big(-\kappa \int_0^t \Vert{}\mu_s\Vert{}_{\mathrm{TV}} \, \dd s\Big) \, \dd t \Big) \, \dd r \\
     & \le \frac{1}{\kappa} \int_0^T \vert{}z'(r) - w'(r)\vert{} \exp\Big(-\kappa \int_0^r \Vert{}\mu_s\Vert{}_{\text{TV}} \, \text{d}s\Big) \, \text{d}r = \frac{1}{\kappa} \Vert{}z - w\Vert{}_{\kappa, \mu, AC}. 
\end{split}
\end{equation*}  
\end{proof}

\begin{theorem} For every fixed $u_0\in C([-1,0])$, the mapping
\begin{equation*}
    \mathfrak{m}_T\ni\mu
    \mapsto z(\cdot;\mu,u_0){\restriction}_{[0,T]}
    \in AC([0,T])
\end{equation*}
is sequentially continuous.
If, in addition, $u_0\in AC([-1,0])$, the same conclusion
holds for the full solution with values in $AC([-1,T])$.
\end{theorem}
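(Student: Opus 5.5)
The plan is to apply Theorem~\ref{Thrm:familie-uniform-banach-uniform} with $X=AC_{u_0}([0,T])$, $d$ the metric induced by $\|\cdot\|_{AC([0,T])}$, $S=\mathfrak m_T$ with its sequential convergence structure, and $d_\mu$ the metric induced by $\|\cdot\|_{\kappa,\mu,AC([0,T])}$. The map is $F(z,\mu)=\mathfrak G_\mu(z)$.

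First I check the hypotheses. Since $\exp(-\kappa KT)\le \exp(-\kappa\int_0^t\|\mu_s\|_{\mathrm{TV}}\,\dd s)\le 1$, we get
\[
e^{-\kappa KT}\|x\|_{AC}\le\|x\|_{\kappa,\mu,AC}\le\|x\|_{AC},
\]
with constants independent of $\mu$. Proposition~\ref{prop:contraction-ODEs-on-AC} gives a contraction coefficient $1/\kappa<1$, uniform in $\mu$, once $\kappa>1$ is fixed.

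Next, for fixed $z\in AC_{u_0}([0,T])$, the map $\mu\mapsto\mathfrak G_\mu(z)\in AC_{u_0}([0,T])$ must be sequentially continuous. This is Proposition~\ref{prop:G_operator_continuous-AC} with $z_m=z$, because $AC_{u_0}([0,T])\subset C_{u_0}([0,T])$. Concretely, the $AC$-distance is $\int_0^T|\langle z_\tau,\mu^{(m)}_\tau-\mu_\tau\rangle|\,\dd\tau$, which tends to $0$ by dominated convergence with the bound $2K\max\{\|u_0\|,\|z\|\}$.

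The sequential version of Theorem~\ref{Thrm:familie-uniform-banach-uniform} then shows that the fixed point $z^\mu$ of $\mathfrak G_\mu$ in $AC_{u_0}([0,T])$ depends sequentially continuously on $\mu$ in the $AC([0,T])$ norm. The only point to watch is that the proof of that theorem, written for neighbourhoods, goes through verbatim along sequences. The key estimate is
\[
d(x^{s},x^{s_0})\le \frac{C}{c(1-\varkappa)}\,d\bigl(F(x^{s_0},s),F(x^{s_0},s_0)\bigr),
\]
and it only needs to be applied along $s=\mu^{(n)}$.

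It remains to identify $z^\mu$ with $z(\cdot;\mu,u_0){\restriction}_{[0,T]}$. The $C$-solution from Proposition~\ref{prop:contraction-ODEs} satisfies \eqref{eq:int-variant}, so it is absolutely continuous on $[0,T]$ with $z(0)=u_0(0)$. Hence it is a fixed point of $\mathfrak G_\mu$ in $AC_{u_0}([0,T])$, and by uniqueness it equals $z^\mu$.

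For the second assertion, take $u_0\in AC([-1,0])$. The full solution on $[-1,T]$ is then $u_0$ concatenated with $z^\mu$, and the two pieces agree at $0$, so it lies in $AC([-1,T])$. Taking the norm with base point $-1$,
\[
\|w\|_{AC([-1,T])}=|w(-1)|+\int_{-1}^0|w'|\,\dd t+\int_0^T|w'|\,\dd t .
\]
The difference of two solutions with the same $u_0$ vanishes on $[-1,0]$, so its $AC([-1,T])$ norm equals $\int_0^T|(z^{\mu^{(n)}}-z^\mu)'|\,\dd t\le\|z^{\mu^{(n)}}-z^\mu\|_{AC([0,T])}\to0$.

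The main obstacle is modest: making sure the weighted norms fit the uniform-equivalence framework. The weights depend on $\mu$ only through $\int_0^t\|\mu_s\|_{\mathrm{TV}}\,\dd s$, which is measurable by Lemma~\ref{lemma:mu-measurable} and bounded by $KT$, so the constants $c=e^{-\kappa KT}$ and $C=1$ do not depend on $\mu$. No continuity of the weights in $\mu$ is needed, since Theorem~\ref{Thrm:familie-uniform-banach-uniform} requires continuity of $F(x,\cdot)$ only into the fixed metric $d$.
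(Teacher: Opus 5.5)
Your proposal is correct and follows the paper's own proof, which simply invokes Theorem~\ref{Thrm:familie-uniform-banach-uniform} for the family $(\|\cdot\|_{\kappa,\mu,AC})_{\mu\in\mathfrak m_T}$, with the hypotheses supplied by Propositions~\ref{prop:G_operator_continuous-AC} and~\ref{prop:contraction-ODEs-on-AC}. Beyond the paper's one-line argument, you also spell out several points it leaves implicit: the $\mu$-independent constants $c=e^{-\kappa KT}$ and $C=1$, the identification of the $AC_{u_0}$ fixed point with the $C$-solution, the fact that only the sequential estimate is needed (so the non-topological a.e.\ weak-* convergence structure causes no trouble), and the extension to $AC([-1,T])$ when $u_0\in AC([-1,0])$.
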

\begin{proof}
The assumptions of Theorem~\ref{Thrm:familie-uniform-banach-uniform} for $(\| \cdot \|_{\kappa,\mu, AC})_{\mu\in \mathfrak{m}_T}$ are verified in 
    Proposition~\ref{prop:G_operator_continuous-AC}  and   Proposition~\ref{prop:contraction-ODEs-on-AC}. 
\end{proof}

The following result distinguishes sequential continuity from
continuity with respect to the product topology on the parameter
space. Although pointwise weak-* convergence of sequences is
sufficient for continuity of the integral operator along sequences,
the operator need not be continuous in the topological sense.

\begin{theorem}
For every fixed $x\in C([-1,T])$, the map
\begin{equation*}
    \mathfrak m_T\ni\mu
    \mapsto \mathfrak{G}_\mu (x)\in AC([0,T])
\end{equation*}
is sequentially continuous. However, this map need not be
continuous: for $x\equiv 1$, it is discontinuous at the zero
parameter.
\end{theorem}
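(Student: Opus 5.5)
Sequential continuity is already available. It is the special case of Proposition~\ref{prop:G_operator_continuous-AC} with the function variable frozen: take $z_m\equiv x$. Alternatively, it follows from the second half of the proof of Proposition~\ref{prop:G_operator_continuous}, since the $AC$-distance reduces to
\begin{equation*}
\int_0^T \Big|\int_{[-1,0]} x(\tau+s)\,\dd(\mu^{(m)}_\tau-\mu_\tau)(s)\Big|\,\dd\tau .
\end{equation*}
This quantity tends to zero by dominated convergence, using the a.e.\ weak-* convergence and the common bound $2K\|x\|_{C([-1,T])}$. So the work is in the counterexample.

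Take $x\equiv 1$ and the zero path $\mu\equiv 0$, so that $\mathfrak G_0(x)\equiv 1$. For any $\nu\in\mathfrak m_T$ we have $(\mathfrak G_\nu(x))'(t)=\nu_t([-1,0])$, hence
\begin{equation*}
\|\mathfrak G_\nu(x)-\mathfrak G_0(x)\|_{AC([0,T])}=\int_0^T|\nu_t([-1,0])|\,\dd t .
\end{equation*}
I will show that every basic product neighbourhood $V$ of $0$ in $\mathfrak m_T$ contains a $\nu$ for which this integral equals $KT$, which rules out continuity at $0$ with $\varepsilon<KT$.

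A basic neighbourhood $V$ restricts only finitely many coordinates $t_1,\dots,t_n$, each to a weak-* neighbourhood of $0$. Define $\nu_t=K\delta_0$ for $t\notin\{t_1,\dots,t_n\}$ and $\nu_{t_i}=0$. Then $\nu\in V$, and $\|\nu_t\|_{\mathrm{TV}}\le K$ for all $t$, so $\nu\in\mathfrak m^{[0,T]}$. Assumption~\ref{A2-ODEs} holds because $t\mapsto\langle f,\nu_t\rangle=Kf(0)\mathbbm 1_{[0,T]\setminus\{t_i\}}(t)$ is Borel measurable. Thus $\nu\in\mathfrak m_T\cap V$, while $\int_0^T|\nu_t([-1,0])|\,\dd t=KT$, since a finite set is Lebesgue-null. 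Hence no neighbourhood of $0$ is mapped into the $AC$-ball of radius $KT/2$ around $\mathfrak G_0(1)$, and the map is discontinuous at $0$.

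The same construction shows discontinuity into $C([0,T])$ as well, since $\sup_t|Kt|=KT$. The main point to check is only that the modified path stays in $\mathfrak m_T$, that is, that it satisfies measurability and the TV bound, which is immediate. It is also worth remarking why this does not contradict sequential continuity. Product neighbourhoods constrain only finitely many times, and since $\mathfrak m^{[0,T]}$ is not first countable, nets such as the $\nu$ indexed by finite subsets of $[0,T]$ converge to $0$ in the product topology while no sequence of this kind does (a countable union of finite sets is still null, so such paths cannot converge a.e.).
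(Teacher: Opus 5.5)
Your proposal is correct and is essentially the paper's argument: the same path with $\widehat\mu_t=K\delta_0$ off the finitely many constrained times $t_1,\dots,t_m$ and $\widehat\mu_{t_j}=0$ lies in every basic product neighbourhood of $0$, yet $\|\mathfrak G_{\widehat\mu}(1)-1\|_{AC([0,T])}=KT$, so the preimage of the $AC$-ball of radius $KT/2$ about $1$ is not open. Your explicit check of assumption~\ref{A2-ODEs} for $\widehat\mu$, and your derivation of the sequential part from Proposition~\ref{prop:G_operator_continuous-AC} with $z_m\equiv x$, only spell out steps the paper's proof leaves implicit.
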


\begin{proof}
We prove discontinuity by exhibiting an open set whose inverse image is not open. Fix $x\equiv 1$ and write
\begin{equation*}
    \Phi(\mu):=\mathfrak{G}_\mu 1.
\end{equation*}
Then $\Phi(0)\equiv 1$. Consider the open ball
\begin{equation*}
    V
    :=
    \big\{
        v\in AC([0,T]):
        \|v-1\|_{AC}<KT/2
    \big\}.
\end{equation*}
Clearly, $0\in\Phi^{-1}(V)$. Let $W$ be any neighbourhood of $0$ in $\mathfrak m_T$.
By the definition of the relative product topology, there
exist finitely many times $t_1,\ldots,t_m\in[0,T]$
and weak-* open neighbourhoods $O_1,\ldots,O_m$ of the
zero measure in $\mathfrak m$ such that
\begin{equation*}
    0\in
    \{
        \mu\in\mathfrak m_T:
        \mu_{t_j}\in O_j,\quad j=1,\ldots,m
    \}
    \subseteq W.
\end{equation*}
Define $ \widehat{\mu}=( \widehat{\mu}_t)_{t\in [0,T]}$ such that 
\begin{equation*}
    \widehat{\mu}_t
    :=
    \begin{cases}
        0,
        & t\in\{t_1,\ldots,t_m\},\\
        K\delta_0,
        & t\in[0,T]\setminus\{t_1,\ldots,t_m\}.
    \end{cases}
\end{equation*}
Then $\widehat{\mu}\in\mathfrak m_T$. Since $\widehat{\mu}_{t_j}=0\in O_j$
for every $j$, we have $\widehat{\mu}\in W$. On the other hand, changing an integrand at finitely many
times does not affect its Lebesgue integral. Thus,
\begin{equation*}
    \Phi(\widehat{\mu})(t)
    =
    1+\int_0^t
    \widehat{\mu}_\tau([-1,0])\,\dd\tau
    =
    1+Kt,
    \qquad t\in[0,T].
\end{equation*}
Consequently,
\begin{equation*}
    \|\Phi(\widehat{\mu})-1\|_{AC}
    =
    \int_0^T K\,\dd t
    =
    KT,
\end{equation*}
so $\widehat{\mu}\notin\Phi^{-1}(V)$. Hence no neighbourhood of $0$ is contained in
$\Phi^{-1}(V)$, even though $0\in\Phi^{-1}(V)$.
The inverse image $\Phi^{-1}(V)$ is therefore not open,
and $\Phi$ is discontinuous at $0$.
\end{proof}

\subsection{ Gr{\"o}nwall estimates}
The classical Gr{\"o}nwall  inequality also yields bounds
for solutions of the delay equation considered here.
For completeness, we derive estimates in the supremum
and absolutely continuous norms, which will subsequently
be used to control the solution operators.

\begin{proposition}\label{prop:Gronwal-determine-bound}
Let $\mu\in\mathfrak m_T$ and $u_0\in C([-1,0])$.
Then the solution $z=z(\cdot;\mu,u_0)$ satisfies
\begin{equation*}
    \sup_{-1\leq r\leq t}|z(r)|
    \leq e^{Kt}\|u_0\|_{C([-1,0])},
    \qquad t\in[0,T],
\end{equation*}
and
\begin{equation*}
    \|z{\restriction}_{[0,T]}\|_{AC([0,T])}
    \leq e^{KT}\|u_0\|_C.
\end{equation*}
If, in addition, $u_0\in AC([-1,0])$, then
$z\in AC([-1,T])$ and
\begin{equation*}
    \|z\|_{AC([-1,T])}
    \leq e^{KT}\|u_0\|_{AC([-1,0])}.
\end{equation*}
\end{proposition}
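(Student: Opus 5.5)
We derive the supremum bound from a Gr\"onwall inequality for the running maximum, and then obtain both absolutely continuous bounds by integrating the pointwise derivative estimate that the equation provides.

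\textbf{Supremum bound.} Let $z=z(\cdot;\mu,u_0)$ and define
\begin{equation*}
    m(t):=\sup_{-1\leq r\leq t}|z(r)|,\qquad t\in[0,T].
\end{equation*}
Since $z\in C([-1,T])$, $m$ is continuous and nondecreasing. For $\tau\in[0,T]$ the segment $z_\tau$ takes values in $[\tau-1,\tau]\subset[-1,\tau]$. Hence the integrand in \eqref{eq:int-variant} satisfies
\begin{equation*}
    \Big|\int_{[-1,0]} z(\tau+s)\,\dd\mu_\tau(s)\Big|\leq \|\mu_\tau\|_{\mathrm{TV}}\,m(\tau)\leq K\,m(\tau).
\end{equation*}
Its measurability is given by the first auxiliary lemma. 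From \eqref{eq:int-variant}, for $0\leq r\leq t$ we get
\begin{equation*}
    |z(r)|\leq\|u_0\|_C+K\int_0^r m(\tau)\,\dd\tau\leq\|u_0\|_C+K\int_0^t m(\tau)\,\dd\tau.
\end{equation*}
For $r\in[-1,0]$ the bound $|z(r)|\leq\|u_0\|_C$ is trivial. Taking the supremum over $r\in[-1,t]$ gives $m(t)\leq\|u_0\|_C+K\int_0^t m(\tau)\,\dd\tau$. The classical integral Gr\"onwall inequality for the continuous function $m$ then yields $m(t)\leq e^{Kt}\|u_0\|_C$, which is the first claim.

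\textbf{$AC([0,T])$ bound.} By \eqref{eq:int-variant}, $z$ is absolutely continuous on $[0,T]$. For a.e.\ $t$,
\begin{equation*}
    z'(t)=\int_{[-1,0]} z(t+s)\,\dd\mu_t(s),
\end{equation*}
and by the estimate above together with the first claim,
\begin{equation*}
    |z'(t)|\leq K\,m(t)\leq Ke^{Kt}\|u_0\|_C.
\end{equation*}
Integrating over $[0,T]$ gives $\int_0^T|z'|\leq(e^{KT}-1)\|u_0\|_C$. Adding $|z(0)|=|u_0(0)|\leq\|u_0\|_C$ gives
\begin{equation*}
    \|z{\restriction}_{[0,T]}\|_{AC([0,T])}\leq e^{KT}\|u_0\|_C.
\end{equation*}

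\textbf{$AC([-1,T])$ bound.} Assume now $u_0\in AC([-1,0])$. Then $z$ is absolutely continuous on $[-1,0]$ and on $[0,T]$, and the two pieces agree at $0$, so $z\in AC([-1,T])$. Its norm splits as
\begin{equation*}
    \|z\|_{AC([-1,T])}=|u_0(-1)|+\int_{-1}^0|u_0'|+\int_0^T|z'|=\|u_0\|_{AC([-1,0])}+\int_0^T|z'|.
\end{equation*}
Using the derivative estimate from the previous step and $\|u_0\|_C\leq\|u_0\|_{AC}$ (stated in the preliminaries), we obtain
\begin{equation*}
    \int_0^T|z'|\leq(e^{KT}-1)\|u_0\|_C\leq(e^{KT}-1)\|u_0\|_{AC([-1,0])}.
\end{equation*}
Summing the two contributions gives $\|z\|_{AC([-1,T])}\leq e^{KT}\|u_0\|_{AC([-1,0])}$.

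\textbf{Main obstacle.} The only delicate point is closing the Gr\"onwall argument despite the delay. Working with the running maximum $m$ rather than with $|z|$ itself handles this, because each history segment $z_\tau$ is controlled by $m(\tau)$; after that the argument is routine.
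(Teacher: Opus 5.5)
Your proposal is correct and follows essentially the same route as the paper: Gr\"onwall applied to the running maximum $\sup_{-1\le r\le t}|z(r)|$, then the pointwise derivative bound $|z'(t)|\le Ke^{Kt}\|u_0\|_C$ integrated over $[0,T]$. For the last claim you likewise split the $AC([-1,T])$ norm at zero and use $\|u_0\|_C\le\|u_0\|_{AC}$.
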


\begin{proof}
For $0\leq r\leq t\leq T$, the integral equation gives
\begin{equation*}
    \begin{aligned}
        |z(r)|
        &\leq
        |u_0(0)|
        +
        \int_0^r\int_{[-1,0]}
        |z(\tau+s)|\,\dd|\mu_\tau|(s)\,\dd\tau\\
        &\leq
        \|u_0\|_C
        +
        K\int_0^t
        \sup_{-1\leq \xi\leq\tau}|z(\xi)|\,\dd\tau.
    \end{aligned}
\end{equation*}
For $r\in[-1,0]$, we have $|z(r)|\leq\|u_0\|_C$.
Taking the supremum over $r\in[-1,t]$, we obtain
\begin{equation*}
    \sup_{-1\leq r\leq t}|z(r)|
    \leq
    \|u_0\|_C
    +
    K\int_0^t
    \sup_{-1\leq \xi\leq\tau}|z(\xi)|\,\dd\tau.
\end{equation*}
The running supremum is continuous in $t$, since $z$
is continuous. Gr{\"o}nwall 's inequality therefore yields
\begin{equation*}
    \sup_{-1\leq r\leq t}|z(r)|
    \leq e^{Kt}\|u_0\|_C.
\end{equation*}
Using the differential equation, we consequently obtain
for almost every $t\in[0,T]$
\begin{equation*}
    |z'(t)|
    \leq
    \|\mu_t\|_{\mathrm{TV}}
    \sup_{-1\leq r\leq t}|z(r)|
    \leq Ke^{Kt}\|u_0\|_C.
\end{equation*}
Hence
\begin{equation*}
    \begin{aligned}
        \|z{\restriction}_{[0,T]}\|_{AC([0,T])}
        &=
        |u_0(0)|+\int_0^T|z'(t)|\,\dd t\\
        &\leq
        \|u_0\|_C
        +
        \|u_0\|_C\int_0^T Ke^{Kt}\,\dd t\\
        &=e^{KT}\|u_0\|_C.
    \end{aligned}
\end{equation*}
Finally, if $u_0\in AC$, the two absolutely continuous
parts agree at zero, so $z\in AC([-1,T])$.
Moreover,
\begin{equation*}
    \begin{aligned}
        \|z\|_{AC([-1,T])}
        &=
        \|u_0\|_{AC}+\int_0^T|z'(t)|\,\dd t\\
        &\leq
        \|u_0\|_{AC}+(e^{KT}-1)\|u_0\|_C\\
        &\leq e^{KT}\|u_0\|_{AC}.
    \end{aligned}
\end{equation*}
\end{proof}

\section{Measure integration}\label{sec:Measure-integration-and-disintegration}
In this section we construct a signed measure on the time–delay domain from the family $(\mu_t)_{t\in[0,T]}$. To be more specific for a given $t\in[0,T]$ we define $\nu_t$ by the formula 
\begin{equation*}
    \nu_t(A)= \int_{0}^t \mu_{\tau}(A_{\tau})\, \dd \tau, 
\end{equation*}
where $A\in\mathfrak{B}([0,t]\times [-1,0])$  and $\hypertarget{A-tau-sect}{A_{\tau}}\vcentcolon =\{s\in [-1,0]: (\tau,s)\in A\}$. We will show that $\nu_t$ is a signed measure, and moreover, 
\begin{equation}\label{eq:measure-integration-eq}
  \int_0^t \int_{[-1,0]} f(\tau,s) \,\dd \mu_{\tau}(s) \,\dd\tau = \int_{[0,t]\times [-1,0]} f(\tau,s) \, \dd \nu_t(\tau,s)  
\end{equation} 
holds for $f\in L_1(\nu_t)$.    

Before we start we recall a classical result known as monotone class theorem, since it will be used later on. 

\begin{theorem}[Monotone class theorem]
\label{thm:monotone-class}
Let $X$ be a set and let $\mathcal A$ be an algebra
of subsets of $X$. Thus, $X\in\mathcal A$, and
$\mathcal A$ is closed under complements relative to $X$
and finite unions.

A family $\mathcal M\subseteq\mathcal P(X)$ is called
a monotone class if it is closed under increasing
countable unions and decreasing countable intersections:
\begin{equation*}
    A_n\in\mathcal M,\quad A_n\subseteq A_{n+1}
    \quad\Longrightarrow\quad
    \bigcup_{n=1}^{\infty}A_n\in\mathcal M,
\end{equation*}
and
\begin{equation*}
    A_n\in\mathcal M,\quad A_{n+1}\subseteq A_n
    \quad\Longrightarrow\quad
    \bigcap_{n=1}^{\infty}A_n\in\mathcal M.
\end{equation*}
Then the smallest monotone class containing $\mathcal A$
is precisely $\sigma(\mathcal A)$. Consequently, if $\mathcal M$ is a monotone class
containing $\mathcal A$, then $\sigma(\mathcal A)\subseteq\mathcal M$. 
\end{theorem}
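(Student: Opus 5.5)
The plan is the standard proof via the good-sets principle. Fix an algebra $\mathcal A$ generating $\sigma(\mathcal A)$, and let $\mathcal M_0$ be the smallest monotone class containing $\mathcal A$. The inclusion $\mathcal M_0\subseteq\sigma(\mathcal A)$ is immediate, since every $\sigma$-algebra is a monotone class. For the reverse inclusion it suffices to show that $\mathcal M_0$ is itself an algebra. A family that is both an algebra and a monotone class is a $\sigma$-algebra: given $A_n\in\mathcal M_0$, the finite unions $B_N=\bigcup_{n\le N}A_n$ lie in $\mathcal M_0$ and increase to $\bigcup_n A_n$. Hence $\sigma(\mathcal A)\subseteq\mathcal M_0$, and the final assertion follows because $\mathcal M_0\subseteq\mathcal M$ for any monotone class $\mathcal M\supseteq\mathcal A$.

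\emph{Closure of $\mathcal M_0$ under complements.} Consider $\mathcal M_1:=\{A\in\mathcal M_0: X\setminus A\in\mathcal M_0\}$. It contains $\mathcal A$, since $\mathcal A$ is an algebra. It is a monotone class, because complementation turns increasing unions into decreasing intersections and vice versa. By minimality, $\mathcal M_1=\mathcal M_0$.

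\emph{Closure under finite unions.} This is the main step and uses a two-stage bootstrapping.
\begin{itemize}
\item For $B\in\mathcal M_0$ set $\mathcal M_B:=\{A\in\mathcal M_0: A\cup B\in\mathcal M_0\}$. This is a monotone class, since $\bigcup_n(A_n\cup B)=(\bigcup_n A_n)\cup B$ for increasing $A_n$, and $\bigcap_n(A_n\cup B)=(\bigcap_n A_n)\cup B$ for decreasing $A_n$.
\item First take $B\in\mathcal A$. Then $\mathcal A\subseteq\mathcal M_B$, so $\mathcal M_B=\mathcal M_0$. Thus $A\cup B\in\mathcal M_0$ whenever $A\in\mathcal M_0$ and $B\in\mathcal A$.
\item By symmetry of union, this says $\mathcal A\subseteq\mathcal M_B$ for every $B\in\mathcal M_0$. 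Applying minimality once more gives $\mathcal M_B=\mathcal M_0$ for all such $B$.
\end{itemize}

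Since $X\in\mathcal A\subseteq\mathcal M_0$, the class $\mathcal M_0$ is an algebra, which completes the argument. The only delicate point is the order of quantifiers in the union step: one fixes $B\in\mathcal A$ first and only then upgrades to $B\in\mathcal M_0$. Everything else is routine set algebra.
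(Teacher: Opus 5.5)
Your argument is correct and complete. It is the standard good-sets proof: complements are handled through $\mathcal M_1$, finite unions through the two-stage bootstrapping of $\mathcal M_B$ with the quantifiers in the right order, and the final step is ``algebra $+$ monotone class $\Rightarrow$ $\sigma$-algebra''.

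The paper does not prove this statement. It only recalls it as a classical result before using it, so there is no competing route to compare. The one detail worth writing out is that $\mathcal M_0$ exists: it is the intersection of all monotone classes containing $\mathcal A$, a nonempty family since $\mathcal P(X)$ is one, and an intersection of monotone classes is again a monotone class.

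Your proof genuinely uses that $\mathcal A$ is an algebra, through relative complements and $X\in\mathcal A$. This matters for the paper's application in Lemma~\ref{lemma:desint-positive-is-positive}. There the interval family $\mathfrak I$ is only a semiring, so one must first pass to the algebra of finite disjoint unions of such intervals before invoking the theorem. On that algebra, nonnegativity of $\alpha_\tau$ follows by additivity.
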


\smallskip\par
\begin{lemma}\label{lemma:borel-monotone-class-measurable} For any $A\in    \mathfrak{B}([0,T]\times [-1,0])$ the mapping 
\begin{equation}
     \big[\,[0,T]\ni t\mapsto \mu_t(A_{t})\in\RR\,\big] 
\end{equation}
is $(\mathfrak{L}([0, T]), \mathfrak{B}(\RR))$\nobreakdash-\hspace{0pt}measurable
\end{lemma}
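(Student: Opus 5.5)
We plan a monotone class (good sets) argument on the product $Q=[0,T]\times[-1,0]$. Let $\mathcal M$ be the family of all $A\in\mathfrak B(Q)$ for which $t\mapsto\mu_t(A_t)$ is $(\mathfrak L([0,T]),\mathfrak B(\RR))$-measurable. Note that for Borel $A$ every section $A_t$ is Borel in $[-1,0]$, so $\mu_t(A_t)$ is well defined. The goal is to show that $\mathcal M$ contains an algebra generating $\mathfrak B(Q)$ and is a monotone class; Theorem~\ref{thm:monotone-class} then gives $\mathfrak B(Q)\subseteq\mathcal M$.

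\textbf{Monotone class property.} If $A_n\uparrow A$, then $(A_n)_t\uparrow A_t$ for every $t$, so $\mu_t((A_n)_t)\to\mu_t(A_t)$ by continuity from below of the finite signed measure $\mu_t$. If $A_n\downarrow A$, continuity from above applies, since $\mu_t$ is finite. A pointwise limit of measurable functions is measurable, so $\mathcal M$ is a monotone class.

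\textbf{The algebra.} Take $\mathcal A$ to be the finite disjoint unions of rectangles $I\times J$, where $I\subseteq[0,T]$ and $J\subseteq[-1,0]$ are intervals (possibly degenerate or empty). This is an algebra on $Q$, and $\sigma(\mathcal A)=\mathfrak B([0,T])\otimes\mathfrak B([-1,0])=\mathfrak B(Q)$ by second countability. For a rectangle we have $\mu_t((I\times J)_t)=\mathbbm 1_I(t)\,\mu_t(J)$. By additivity of the sections, it therefore suffices to show that $t\mapsto\mu_t(J)$ is measurable for every interval $J\subseteq[-1,0]$.

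\textbf{Main step.} The interval case is where assumption \ref{A2-ODEs} enters, and it is the only point requiring care, since \ref{A2-ODEs} only gives measurability of $t\mapsto\langle f,\mu_t\rangle$ for continuous $f$. For a closed interval $J$, choose continuous $f_k$ with $0\le f_k\le 1$ and $f_k\downarrow\mathbbm 1_J$ pointwise on $[-1,0]$, for instance piecewise linear functions equal to $1$ on $J$ and vanishing outside the $1/k$-neighbourhood of $J$. Dominated convergence with respect to $|\mu_t|$ gives $\langle f_k,\mu_t\rangle\to\mu_t(J)$ for each $t$, so $t\mapsto\mu_t(J)$ is measurable. For open or half-open intervals, approximate from inside by increasing continuous functions with $f_k\uparrow\mathbbm 1_J$. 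Alternatively, write such $J$ as an increasing union of closed intervals, or as a difference of a closed interval and finitely many points, where points are themselves closed intervals. The empty set and $[-1,0]$ are trivial.

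Combining the three steps, $\mathcal M\supseteq\mathcal A$ and $\mathcal M$ is a monotone class. Hence $\mathcal M\supseteq\sigma(\mathcal A)=\mathfrak B(Q)$, which proves the lemma.
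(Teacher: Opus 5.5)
Your argument is correct and is essentially the paper's proof: a monotone class argument over an algebra of finite disjoint unions of rectangles, reduced via \ref{A2-ODEs} to measurability of $t\mapsto\mu_t(J)$ by pointwise approximation of $\mathbbm 1_J$ with continuous functions. The paper uses non-monotone trapezoids $\Pi_m\to\mathbbm 1_{[a,b)}$, whereas you use $f_k\downarrow\mathbbm 1_J$ for closed $J$ together with dominated convergence against $|\mu_t|$.

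There is one small slip. A half-open $J=[a,b)$ with $a>-1$ admits no continuous $f_k\uparrow\mathbbm 1_J$, since $f_k\le\mathbbm 1_J$ and continuity force $f_k(a)\le 0$. Your fallback, $[a,b)=\bigcup_n[a,b-1/n]$ or $[a,b]\setminus\{b\}$, handles this case correctly.
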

\begin{proof}
 The proof is based on the monotone class theorem. We start by showing that the set 
 \begin{equation*}
 \begin{split}
          \mathcal{M} : = \big\{\,  A\in \mathfrak{B}([0,T]\times  [-1,0]) :  \big[\,[0,T]\ni & \tau\mapsto \mu_{\tau}(A_{\tau})\in\RR\,\big]   \\ & \text{ is } (\mathfrak{L}([0, T]), \mathfrak{B}(\RR))\text{\nobreakdash-\hspace{0pt}measurable} \, \big\}.  
 \end{split}
 \end{equation*}
 is a monotone class. Let $(A_m)_{m=1}^{\infty}\subset  \mathcal{M}$ such that $A_1\subset A_2\subset \dots $. Since, for any $\tau$ we have  
   \begin{equation*}
            \mu_{\tau}\Big(\Big(\bigcup_{m=1}^{\infty} A_{m}\Big)_{\tau}\Big)  =  \mu_{\tau}\Big(\bigcup_{m=1}^{\infty} (A_{m})_{\tau}\Big)  
       =\lim_{m\to\infty} \mu_{\tau}( (A_{m})_{\tau}), 
   \end{equation*}
  therefore, the mapping 
   \begin{equation*}
         \Big[\,[0,T]\ni \tau \mapsto \mu_{\tau}\Big(\Big(\bigcup_{m=1}^{\infty} A_{m}\Big)_{\tau}\Big) \in\RR \,\Big]
   \end{equation*}
   is measurable as a point-wise limit of measurable mappings and $A_1\cup A_2\cup\dots \in \mathcal{M}$. Similarly for any $(B_n)_{m=1}^{\infty}$  such that $B_1\supset B_2\supset \dots$ we have 
      \begin{equation*}
        \mu_{\tau}\Big(\Big(\bigcap_{m=1}^{\infty} B_{m}\Big)_{\tau}\Big)  =  \mu_{\tau}\Big(\bigcap_{m=1}^{\infty} (B_{m})_{\tau}\Big) 
       =\lim_{n\to\infty} \mu_{\tau}( (B_{n})_{\tau}). 
      \end{equation*}
   As before, the mapping 
    \begin{equation*}
         \Big[\,[0,T] \ni \tau \mapsto \mu_{\tau}\Big(\Big(\bigcap_{n=1}^{\infty} B_{n}\Big)_{\tau}\Big) \in\RR \,\Big]
   \end{equation*}
   is measurable as a pointwise limit of measurable maps. Hence, $B_1\cap B_2\cap\dots \in \mathcal{M}$. We are going to show now that monotone class contains the ring (algebra of sets) 
\begin{equation*}
\begin{split}
    \mathcal R :=
    \Big\{
        \bigcup_{k=1}^{m}(E_k\times J_k): &      
            \quad m\in\mathbb N,\quad E_k\in\mathfrak B([0,T]), \quad J_k=[a_k,b_k)\cap[-1,0], \\ & 
            a_k,b_k\in\mathbb R,\quad a_k<b_k,\quad E_k\times J_k\text{ are pairwise disjoint}\Big\}. 
\end{split}
\end{equation*}
   In other words we are going to show $\mathcal{R}\subset \mathcal{M}$. Therefore, let $A\in \mathcal{R}$ so $A=(E_1\times [a_1,b_1))\cup\dots\cup (E_m\times [a_m,b_m))$ for some Borel sets $E_k$ and some intervals $[a_k,b_k)$ (and the union is by definition disjoint). In order to show that $A\in \mathcal{M}$ we will show that the mapping 
   \begin{equation*}
       [\,[0,T]\ni\tau\mapsto \mu_{\tau}(A_{\tau})\in \RR\,]
   \end{equation*}
   is measurable. To do that note that, for each $\tau\in [0,T]$ we have 
   \begin{align*}
       \mu_{\tau}(A_{\tau}) & = \mu_{\tau}\Big(\Big( \bigcup_{k=1}^m (E_k\times [a_k,b_k))\Big)_{\tau}\Big) \\
       &= \sum_{k=1}^m \mu_{\tau}( (E_k\times [a_k,b_k))_{\tau} ) \\
          &= \sum_{k=1}^m \mathbbm{1}_{E_k}(\tau) \mu_{\tau}([a_k,b_k)). 
   \end{align*}
Hence, it remains to show that the mappings 
\begin{equation} \label{eq:ring-map}
    [\, [0,T]\ni \tau \mapsto  \mu_{\tau}([a_k,b_k)) \in \RR \, ]
\end{equation}
are measurable (for each $k=1,\dots,m$). Without loss of generality we assume that $\mu_{\tau}$ is a positive measure; if not we can decompose measure $\mu_{\tau}$ first and proceed separately. We start by defining the sequence of trapezoid-shaped function $(\Pi_m)_{m=1}^{\infty}\subset C([-1,0])$ by 
\begin{equation*}
    \Pi_m(s; a_k, b_k) = \begin{cases}  0 & \text{for } s \in [-1, a_k - 1/m], \\  m(s - a_k) + 1 & \text{for } s \in [a_k - 1/m, a_k], \\  1 & \text{for } s \in [a_k, b_k - 1/m], \\  m(b_k - s) & \text{for } s \in [b_k - 1/m, b_k], \\  0 & \text{for } s \in [b_k, 0].  \end{cases}
\end{equation*}
Lastly we have the following inequality 
\begin{equation*}
  \mu_{\tau}( [a_k,b_k-1/m) )  \le \langle \Pi_m(\cdot;a_k,b_k), \mu(\cdot ; \tau)  \rangle \le \mu_{\tau}([a_k-1/m,b_k) ).
\end{equation*}
Therefore, due to lower and upper semicontinuity of the measure $\mu_{\tau}$ and the squeeze theorem we have the following 
\begin{equation*}
    \lim_{m\to\infty} \langle \Pi_m(\cdot;a_k,b_k), \mu_{\tau} \rangle  = \mu_{\tau}([a_k,b_k)). 
\end{equation*}
This together with the fact that for all $m\in\NN$ the mapping 
\begin{equation*}
    [\, [0,T]\ni \tau \mapsto \langle \Pi_m(\cdot;a_k,b_k), \mu_{\tau}  \rangle \in \RR \,]
\end{equation*} 
is measurable (see assumption~\ref{A2-ODEs}) shows that the mapping~\ref{eq:ring-map} is measurable. Therefore, $A\in\mathcal{M}$. By the monotone class theorem, the $\sigma$\nobreakdash-\hspace{0pt}ring generated by the ring $\mathcal{R}$ is contained in the monotone class $\mathcal{M}$. Since $[0,T]\times [-1,0] \in \mathcal{R}$, the ring $\mathcal{R}$ is an algebra and the $\sigma$\nobreakdash-\hspace{0pt}ring is a  $\sigma$\nobreakdash-\hspace{0pt}algebra $\mathfrak{B}([0,T]\times [-1,0])$.      
\end{proof}
The above lemma shows that the mapping 
\begin{equation*}
    [\, \mathfrak{B}([0,t]\times [-1,0]) \ni A \mapsto \nu_t(A) \in \RR \,]
\end{equation*} 
is well defined for any $t\in [0,T]$. In other words, for all $A \in \mathfrak{B}([0,T]\times [-1,0])$ the mapping 
\begin{equation*}
    [\,[0,t]\ni \tau \mapsto \mu_{\tau}(A_{\tau})\in\RR\,]
\end{equation*}
is in $L_{\infty}([0,t])$. The next lemma will show that the mapping $\nu_t$ is indeed a signed measure.  
\begin{lemma} For any $t\in[0,T]$ the assignment $\nu_t:\mathfrak{B}([0,T]\times [-1,0]) \to \RR$ is a signed measure. \end{lemma}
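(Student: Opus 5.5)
The plan is to verify the defining properties of a signed measure directly from the formula $\nu_t(A)=\int_0^t \mu_\tau(A_\tau)\,\dd\tau$. Lemma~\ref{lemma:borel-monotone-class-measurable} guarantees that the integrand is $\mathfrak L$-measurable, and it is bounded by $K$, so $\nu_t(A)$ is a well-defined real number with $|\nu_t(A)|\le Kt$. If $A$ lies in $\mathfrak B([0,T]\times[-1,0])$ rather than in $\mathfrak B([0,t]\times[-1,0])$, we integrate only over $[0,t]$. This is the same as replacing $A$ by $A\cap([0,t]\times[-1,0])$, so nothing changes. Trivially $\nu_t(\emptyset)=0$, since every section of the empty set is empty.

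The main point is countable additivity. Let $(A_n)_{n\ge1}$ be pairwise disjoint Borel sets with union $A$. For each fixed $\tau$, the sections $(A_n)_\tau$ are pairwise disjoint Borel subsets of $[-1,0]$ with union $A_\tau$. Countable additivity of the finite signed measure $\mu_\tau$ then gives
\begin{equation*}
  \mu_\tau(A_\tau)=\sum_{n=1}^\infty \mu_\tau((A_n)_\tau)=\lim_{N\to\infty}\mu_\tau\Big(\Big(\bigcup_{n=1}^N A_n\Big)_\tau\Big).
\end{equation*}
The partial sums are dominated uniformly in $N$ and $\tau$:
\begin{equation*}
  \Big|\sum_{n=1}^N\mu_\tau((A_n)_\tau)\Big|\le \sum_{n=1}^N|\mu_\tau|((A_n)_\tau)\le|\mu_\tau|([-1,0])\le K .
\end{equation*}
Each partial sum is measurable in $\tau$, again by Lemma~\ref{lemma:borel-monotone-class-measurable}. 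Since $K$ is integrable on $[0,t]$, the dominated convergence theorem lets us interchange the limit with $\int_0^t\cdot\,\dd\tau$, which yields $\nu_t(A)=\sum_n\nu_t(A_n)$.

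It remains to check that this series converges absolutely, as a real signed measure requires; this follows from the same domination. The inequality
\begin{equation*}
  \sum_n|\nu_t(A_n)|\le\sum_n\int_0^t|\mu_\tau|((A_n)_\tau)\,\dd\tau
\end{equation*}
holds, and the right-hand side equals $\int_0^t|\mu_\tau|(A_\tau)\,\dd\tau\le Kt$ by monotone convergence. Here I need measurability of $\tau\mapsto|\mu_\tau|(B_\tau)$. I expect this to be the only delicate step. I would obtain it by rerunning the monotone class argument of Lemma~\ref{lemma:borel-monotone-class-measurable} with $|\mu_\tau|$ in place of $\mu_\tau$. On rectangles $E\times[a,b)$, measurability of $\tau\mapsto|\mu_\tau|([a,b))$ follows as in Lemma~\ref{lemma:mu-measurable}: express the value as a supremum over a countable family of continuous test functions supported near $[a,b)$ with sup norm at most $1$, and then take limits. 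A simpler alternative avoids this step. Absolute convergence is automatic whenever a series converges to the same value under every rearrangement, and any rearrangement of $(A_n)$ is again a disjoint decomposition of $A$, so the additivity already proved gives the same sum for each reordering. By Riemann's rearrangement theorem the series therefore converges absolutely, and I would use this argument. Finiteness, $|\nu_t|\le Kt$, then completes the proof that $\nu_t$ is a finite signed measure.
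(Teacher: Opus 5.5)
Your proof is correct and follows the paper's argument: countable additivity of each $\mu_\tau$ on the disjoint sections, measurability of the partial sums from Lemma~\ref{lemma:borel-monotone-class-measurable}, and dominated convergence with the constant bound $\sum_{n\le N}|\mu_\tau|((A_n)_\tau)\le K$. Your absolute-convergence check, via the fact that the sum is unchanged under every rearrangement of the disjoint family, is valid and makes explicit a point the paper leaves implicit; the alternative route through measurability of $\tau\mapsto|\mu_\tau|(B_\tau)$ is not needed.
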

\begin{proof}
 Fix $t$. Clearly, $\nu_t(\varnothing)=0$. Let $(A_m)_{m=1}^{\infty}\subset \mathfrak{B}([0,T]\times [-1,0])$ be a sequence of pairwise disjoint sets. We have the following 
    \begin{align*}
          \nu_t\Big(\bigcup_{m=1}^{\infty} A_m \Big) & = \int_0^t \mu_{\tau} \Big( \Big(\bigcup_{m=1}^{\infty} A_m\Big)_{\tau} \Big) \,\dd \tau  
           = \int_0^t \sum_{m=1}^{\infty} \mu_{\tau}( (A_m)_{\tau}) \,\dd \tau  \\
          & =  \sum_{m=1}^{\infty} \int_0^t \mu_{\tau}( (A_m)_{\tau}) \,\dd \tau  
           =  \sum_{m=1}^{\infty}   \nu_t( A_m).    
    \end{align*}
Note that interchanging the sum with the integral follows from Lebesgue's dominated convergence theorem. Indeed, the sequence of measurable mappings 
\begin{equation*}
    \Big(\Big[\,[0,t]\ni\tau\mapsto\sum_{m=1}^{L}\mu_{\tau} ( (A_m)_{\tau}) \,\Big]\Big)_{L=1}^{\infty}
\end{equation*}
is bounded, since for any $L$ and $\tau$ we have  
\begin{align*}
   \Big| \sum_{m=1}^{L}\mu_{\tau} ( (A_m)_{\tau}) \Big| & \le \sum_{m=1}^{L} | \mu_{\tau} | ( (A_m)_{\tau}) \\
   & = | \mu_{\tau}| \Big( \Big( \bigcup_{m=1}^{\infty} A_m\Big)_{\tau}\Big) 
      \le K.
\end{align*}
\end{proof}

\begin{lemma}\label{lemma:desint-positive-is-positive}
    Let $\beta$ be a positive measure on $\mathfrak{B}([0,T]\times[-1,0])$ such that 
    \begin{equation*}
        \beta(A)=\int_{0}^T \alpha_{\tau}(A_\tau) \,\dd \tau
    \end{equation*}
    for some mapping $[\,[0,T]\ni\tau\mapsto \alpha_{\tau}(\cdot _\tau)\in \mathfrak{m}\,]$ such that for any Borel set $A\in [0,T]\times[-1,0]$ the mapping $[\,[0,T]\ni\tau\mapsto \alpha_{\tau}(A_\tau)\in \RR\,]$ is  $(\mathfrak{L}([0, T]), \mathfrak{B}(\RR))$\nobreakdash-\hspace{0pt}measurable. Then for $\ell$-a.e. $\tau\in [0,T]$ the measures $\alpha_{\tau}$ are positive. 
\end{lemma}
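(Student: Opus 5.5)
We prove the statement by contradiction, using a countable family of rectangles to detect negativity. The key point is that a signed measure on $[-1,0]$ is positive as soon as it is nonnegative on every interval $J=[a,b)\cap[-1,0]$ with rational endpoints. Indeed, these intervals form a $\pi$-system generating $\mathfrak{B}([-1,0])$. Any finite signed measure nonnegative on all of them is then nonnegative on the generated algebra, since finite disjoint unions of such intervals are again finite disjoint unions of rational half-open intervals. From the algebra, nonnegativity passes to all Borel sets by the monotone class theorem (Theorem~\ref{thm:monotone-class}): the class of Borel sets $B$ with $\alpha_\tau(B)\ge 0$ is closed under monotone limits because finite signed measures are continuous along monotone sequences. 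Here we should include $[-1,0]$ itself, by taking $b>0$.

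So it suffices to show that for each of the countably many rational intervals $J$, the set
\[
N_J:=\{\tau\in[0,T]:\alpha_\tau(J)<0\}
\]
is $\ell$-null. The union $N:=\bigcup_J N_J$ is then null, and for $\tau\notin N$ the measure $\alpha_\tau$ is positive by the preceding paragraph.

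To show $\ell(N_J)=0$, fix $J$. By hypothesis, applied to the Borel set $[0,T]\times J$, the map $\tau\mapsto\alpha_\tau(J)$ is Lebesgue measurable. Hence $N_J$ is Lebesgue measurable, and we may pick a Borel set $E\subseteq N_J$ with $\ell(N_J\setminus E)=0$. The set $A:=E\times J$ is Borel in $[0,T]\times[-1,0]$, and its sections are $A_\tau=J$ for $\tau\in E$ and $A_\tau=\varnothing$ otherwise. Therefore
\[
0\le\beta(A)=\int_0^T\alpha_\tau(A_\tau)\,\dd\tau=\int_E\alpha_\tau(J)\,\dd\tau .
\]
The integrand is strictly negative on $E$, so $\ell(E)=0$, and hence $\ell(N_J)=0$.

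The main obstacle is only bookkeeping. One point is the Lebesgue-versus-Borel measurability of $N_J$, which is handled by passing to the Borel inner approximation $E$. The other is checking that nonnegativity on the countable generating family forces positivity of each $\alpha_\tau$; this is the monotone class step above. Boundedness $\|\alpha_\tau\|_{\mathrm{TV}}\le K$ guarantees that all the integrals are finite.
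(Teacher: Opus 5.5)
Your proposal is correct and follows essentially the same route as the paper. Both arguments test $\beta$ on rectangles $E\times J$ with $J$ a rational half-open interval, discard a countable union of null sets, and extend nonnegativity to all Borel sets by the monotone class theorem. Your explicit passage from the interval family to the algebra of its finite disjoint unions, and your Borel inner approximation of $N_J$, are slightly more careful than the paper's version, which calls the interval family a ring and applies the monotone class theorem to it directly.
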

\begin{proof}
    Let $\mathfrak{I}=\{ [a,b)\cap [-1,0] : a,b\in\QQ \ \& \ a<b \}$. Observe that straight from the assumptions we have that for any $I\in \mathfrak{I}$
    the mapping 
\begin{equation*}
    [\, [0,T]\ni \tau \mapsto \alpha_{\tau}(I) \in \RR\,]
\end{equation*}
is $(\mathfrak{L}([0, T]), \mathfrak{B}(\RR))$\nobreakdash-\hspace{0pt}measurable (compare with the proof of Lemma~\ref{lemma:borel-monotone-class-measurable}) and in addition for any $J\in \mathfrak{B}([0,T])$ we have 
\begin{equation*}
    \int_{J}\alpha_{\tau}(I) \,\dd \tau = \beta(J\times I)\ge 0. 
\end{equation*} 
Hence, for any $I\in \mathfrak{I}$; $\alpha_{\tau}(I)\ge 0$ for $\ell$-a.e.\ $\tau\in [0,T]$. Since, $|\mathfrak{I}|=\aleph_0$ we can deduce that there is a $\ell$-null-measure subset of $[0,T]$ such that for any $\tau$ in its complement and any $I\in\mathfrak{I}$ we have $\alpha_{\tau}(I)\ge 0$. It remains to show that $\alpha_{\tau}\ge 0$. It can be done via the monotone class theorem. It is easy to check that the family of sets $\{\,B\in\mathfrak{B} ([-1,0]) : \alpha_{\tau}(B)\ge 0 \, \}$ is a monotone class. From a previous observation (ring) $\mathfrak{I}$ is contained in it. Hence, by the monotone class theorem, the $\sigma$-ring generated by $\mathfrak{I}$ is contained in $\{B\in\mathfrak{B} ([-1,0]) : \alpha_{\tau}(B)\ge 0 \}$. Hence, such a $\sigma$-ring equals $\mathfrak{B} ([-1,0])$, and the proof is done.       
\end{proof}
In a similar sense, we have the following Lemma concerning a measurable property ${p}(\tau,s)$ holding $\nu$-a.e.\ on $[0,T]\times [-1,0]$ or somehow symmetrically  $\ell$-a.e.  on $[0,T]$ and $\mu_{\tau}$-a.e.\ on $[-1,0]$. It turns out that we have equivalence. 

\begin{lemma}\label{lemma:p-formula}
Let $p(t,s)$ be a property such that
\begin{equation*}
        \{(t,s)\in[0,T]\times[-1,0]:p(t,s)\}
    \in\mathfrak B([0,T]\times[-1,0]).
\end{equation*}
Then the following statements are equivalent:
\begin{enumerate}[label=\textup{(}\roman*\textup{)}, ref=\textup{(}\emph{\roman*}\textup{)}]
    \item\label{lemma:p-formula-i} $p(t,s)$ holds for $\nu$-almost every
          $(t,s)\in[0,T]\times[-1,0]$;
    \item\label{lemma:p-formula-ii} for $\ell$-almost every $t\in[0,T]$,
          $p(t,s)$ holds for $\mu_t$-almost every
          $s\in[-1,0]$.
\end{enumerate}
\end{lemma}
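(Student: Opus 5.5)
Let $N:=\{(t,s)\in[0,T]\times[-1,0]:\neg p(t,s)\}$, which is Borel by hypothesis. Both statements are assertions that $N$ is null, in two different senses. Statement \ref{lemma:p-formula-i} says $|\nu|(N)=0$, where $\nu=\nu_T$. Statement \ref{lemma:p-formula-ii} says $|\mu_t|(N_t)=0$ for $\ell$-a.e.\ $t$. The plan is therefore to prove the disintegration formula
\begin{equation*}
    |\nu|(A)=\int_0^T|\mu_\tau|(A_\tau)\,\dd\tau,
    \qquad A\in\mathfrak B([0,T]\times[-1,0]).
\end{equation*}
Once this holds, the equivalence is immediate. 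The integrand $\tau\mapsto|\mu_\tau|(N_\tau)$ is nonnegative, so its integral vanishes if and only if it vanishes $\ell$-a.e.

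To get the formula, define $\lambda(A):=\int_0^T|\mu_\tau|(A_\tau)\,\dd\tau$. First I check that this is a well-defined positive measure. Measurability of $\tau\mapsto|\mu_\tau|(A_\tau)$ follows by repeating the monotone class argument of Lemma~\ref{lemma:borel-monotone-class-measurable}. The only new ingredient is on rectangles: I need measurability of $\tau\mapsto|\mu_\tau|([a,b))$. I would obtain it as
\begin{equation*}
    |\mu_\tau|([a,b))=\lim_{m\to\infty}\sup\{\langle g,\mu_\tau\rangle: g\in\mathcal G_m\},
\end{equation*}
where $\mathcal G_m$ is a countable dense family of continuous functions with $|g|\le\Pi_m(\cdot;a,b)$. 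This is the same device as in Lemma~\ref{lemma:mu-measurable}, now localized with the trapezoids. Countable additivity of $\lambda$ then follows exactly as in the proof that $\nu_t$ is a signed measure.

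Next, the upper bound. For every $A$ we have $|\nu(A)|\le\lambda(A)$. Partitioning $A$ and summing gives $|\nu|\le\lambda$, so \ref{lemma:p-formula-ii} implies \ref{lemma:p-formula-i}.

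For the converse I apply Lemma~\ref{lemma:desint-positive-is-positive}. Write $\nu^\pm$ for the Jordan parts of $\nu$ and let $P=P^\nu$ be the positive set. The set function
\begin{equation*}
    A\mapsto\nu(A\cap P)=\int_0^T\mu_\tau(A_\tau\cap P_\tau)\,\dd\tau
\end{equation*}
is a positive measure represented by the kernel $\alpha_\tau:=\mu_\tau(\,\cdot\,\cap P_\tau)$, and the required measurability holds by Lemma~\ref{lemma:borel-monotone-class-measurable} applied to $A\cap P$. Lemma~\ref{lemma:desint-positive-is-positive} then gives, for a.e.\ $\tau$, that $\mu_\tau\ge0$ on subsets of $P_\tau$. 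The same argument with the negative set gives $\mu_\tau\le0$ on subsets of $P_\tau^c$. Hence $P_\tau$ is a Hahn positive set for $\mu_\tau$ for a.e.\ $\tau$, and therefore
\begin{equation*}
    |\mu_\tau|(A_\tau)=\mu_\tau(A_\tau\cap P_\tau)-\mu_\tau(A_\tau\setminus P_\tau).
\end{equation*}
Integrating over $\tau$ gives $\lambda(A)=\nu^+(A)+\nu^-(A)=|\nu|(A)$.

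The main obstacle is the inverse step just described: showing that the sections of the Hahn set of $\nu$ are Hahn sets of the $\mu_\tau$. The lemma only states positivity of the kernel on rational half-open intervals, and a monotone class extension to all Borel sets. To apply it with the kernel $\alpha_\tau$, I must make sure its hypotheses (values in $\mathfrak m$, measurability in $\tau$ for every Borel $A$) are checked carefully. If that route became awkward, the fallback is to prove \ref{lemma:p-formula-i}$\Rightarrow$\ref{lemma:p-formula-ii} directly. Restrict to $N\cap P$ and $N\setminus P$, apply Lemma~\ref{lemma:desint-positive-is-positive} to the zero measure on $N\cap P$ to get $\mu_\tau\equiv0$ on $(N\cap P)_\tau$, and similarly on $N\setminus P$.
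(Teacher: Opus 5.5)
Your proof is correct, but it takes a longer route than the paper's. You prove the full identity $|\nu|(A)=\int_0^T|\mu_\tau|(A_\tau)\,\dd\tau$ by showing that the sections $P_\tau$ of a positive set of $\nu$ are positive sets of $\mu_\tau$ for a.e.\ $\tau$. That is the argument of the paper's later Lemma~\ref{lemma:mu+/-_desint}, and your identity is the indicator case of Lemma~\ref{lemma:nu-modulus}. The step you flag as the main obstacle is fine: $\|\mu_\tau(\cdot\cap P_\tau)\|_{\mathrm{TV}}\le K$, and measurability in $\tau$ comes from Lemma~\ref{lemma:borel-monotone-class-measurable} applied to $A\cap P$, as you say.

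The paper does not use the Hahn decomposition of $\nu$ here. It works only with the exceptional set $Z$ and the kernel $\alpha_t:=\mu_t(\cdot\cap Z_t)$. If $|\nu|(Z)=0$, then $\nu(\cdot\cap Z)=\int_0^T\alpha_t(\cdot_t)\,\dd t$ is the zero measure. Lemma~\ref{lemma:desint-positive-is-positive} applied to both $\alpha$ and $-\alpha$ then gives $\alpha_t=0$, i.e.\ $|\mu_t|(Z_t)=0$, for a.e.\ $t$. Conversely, $|\mu_t|(Z_t)=0$ a.e.\ gives $\nu(A\cap Z)=0$ for every Borel $A$ by direct computation. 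Your fallback is essentially this proof, and the split of $N$ along $P$ is unnecessary once you apply the lemma to both signs of the kernel. The paper's route is shorter and needs neither positive sets of $\nu$ nor measurability of $\tau\mapsto|\mu_\tau|(A_\tau)$. Yours yields the disintegration of $|\nu|$ at the same time, which the paper proves anyway a few lemmas later.

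If you keep your route, you can shorten it. Once $P_\tau$ is a positive set of $\mu_\tau$ for a.e.\ $\tau$, you have $|\mu_\tau|(A_\tau)=\mu_\tau((A\cap P)_\tau)-\mu_\tau((A\setminus P)_\tau)$ for a.e.\ $\tau$. This gives Lebesgue measurability of $\tau\mapsto|\mu_\tau|(A_\tau)$ via Lemma~\ref{lemma:borel-monotone-class-measurable} and completeness of $\mathfrak L([0,T])$. Integrating then gives $\lambda=|\nu|$ directly. The trapezoid supremum device and the separate bound $|\nu|\le\lambda$ become redundant, though they are not wrong.
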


\begin{proof}
Define the Borel set
\begin{equation*}
        Z:=\{(t,s)\in [0,T]\times[-1,0]: p(t,s)
         \text{ does not hold}\}.
\end{equation*}
The two assertions are equivalent, respectively, to
\begin{equation*}
    |\nu|(Z)=0
    \qquad\text{and}\qquad
    |\mu_t|(Z_t)=0
    \quad\text{for $\ell$-almost every }t\in [0,T].
\end{equation*}
We prove the equivalence of these conditions. For each $t\in I$, define the signed measure $\alpha_t(B):=\mu_t(B\cap Z_t)$ for $B\in\mathfrak B([-1,0])$. 
Its total variation satisfies $\|\alpha_t\|_{\mathrm{TV}}
\leq\|\mu_t\|_{\mathrm{TV}}\leq K$. 
Moreover, for every $A\in\mathfrak B([0,T]\times [-1,0])$, we have $\alpha_t(A_t)=\mu_t((A\cap Z)_t)$. 
Therefore, by Lemma~\ref{lemma:borel-monotone-class-measurable}, the mapping $[ \, t \mapsto \alpha_t(A_t)\, ]$ is $(\mathcal L([0,T]),\mathfrak B(\RR))$-measurable. Thus both $(\alpha_t)_{t\in [0,T]}$
and $(-\alpha_t)_{t\in [0,T]}$ satisfy
assumptions of Lemma~\ref{lemma:desint-positive-is-positive}.

\ref{lemma:p-formula-i}$\Rightarrow$\ref{lemma:p-formula-ii}.  Suppose that $|\nu|(Z)=0$. Then
\begin{equation*}
        \beta(A):=\nu(A\cap Z)
             =\int_0^T\alpha_t(A_t)\,\dd t
\end{equation*}
is the zero measure. In particular, both $\beta$
and $-\beta$ are nonnegative.
Applying Lemma~\ref{lemma:desint-positive-is-positive} to the representations of
$\beta$ and $-\beta$, we obtain $\alpha_t\geq0 $ and $-\alpha_t\geq0$ 
outside a common $\ell$-null set.
Therefore, $\alpha_t=0$ as a measure for almost
every $t$, and hence $ |\mu_t|(Z_t)=0$
for $\ell$-almost every $t$. 

\ref{lemma:p-formula-ii}$\Rightarrow$\ref{lemma:p-formula-i}. Conversely, suppose that $|\mu_t|(Z_t)=0$
for $\ell$-almost every $t$.
For every $A\in\mathfrak B([0,T]\times [-1,0])$ we have $   |\mu_t(A_t\cap Z_t)|
    \leq|\mu_t|(Z_t)=0$ 
for almost every $t$. Consequently,
\begin{equation*}
        \nu(A\cap Z)
    =\int_{0}^T\mu_t(A_t\cap Z_t)\,\dd t
    =0.
\end{equation*}
Thus the restriction of $\nu$ to $Z$ is the zero
measure, so $|\nu|(Z)=0$. 
\end{proof}

\begin{lemma}\label{lemma:Hahn-fact}
    Let $(X,\Sigma,\alpha)$ be a measure space. Moreover, let $ \widetilde{P}, \widetilde{N}\in\Sigma$ be such that $\widetilde{P}\cap \widetilde{N}=\varnothing$, $\widetilde{P}\cup \widetilde{N}=X$ and for any $Y\in\Sigma$ we have $\alpha(Y\cap  \widetilde{P})\ge 0$ and $\alpha(Y\cap \widetilde{N})\le 0$. Then $\alpha^+=\alpha(\cdot \cap \widetilde{P})$ and $\alpha^-=-\alpha(\cdot \cap \widetilde{N})$, or, in other words, $\widetilde{P}, \widetilde{N}\in\Sigma$ are positive and negative sets of the measure $\alpha$. 
\end{lemma}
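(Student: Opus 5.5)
The plan is to verify directly that the set functions $\gamma^+:=\alpha(\cdot\cap\widetilde P)$ and $\gamma^-:=-\alpha(\cdot\cap\widetilde N)$ coincide with the Jordan parts $\alpha^+$ and $\alpha^-$. The main tool is the characterization of the positive set recalled in Section~\ref{sec:Preliminaries}: $P^{\alpha}$ is (up to null sets) the set attaining $\alpha(P^{\alpha})=\sup\{\alpha(A):A\in\Sigma\}$, and $\alpha^+(A)=\alpha(A\cap P^\alpha)$.

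\textbf{Step 1: $\gamma^\pm$ are nonnegative measures with $\alpha=\gamma^+-\gamma^-$.} Each $\gamma^\pm$ is countably additive, being a restriction of the signed measure $\alpha$ (up to sign). By hypothesis, $\gamma^+(Y)=\alpha(Y\cap\widetilde P)\ge0$ and $\gamma^-(Y)=-\alpha(Y\cap\widetilde N)\ge 0$ for every $Y\in\Sigma$. Since $\widetilde P\sqcup\widetilde N=X$, finite additivity gives $\alpha(Y)=\gamma^+(Y)-\gamma^-(Y)$.

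\textbf{Step 2: $\widetilde P$ attains the supremum.} For any $A\in\Sigma$ we have
\[
\alpha(A)=\alpha(A\cap\widetilde P)+\alpha(A\cap\widetilde N)\le\alpha(A\cap\widetilde P)\le \alpha(A\cap\widetilde P)+\alpha(\widetilde P\setminus A)=\alpha(\widetilde P).
\]
The last inequality holds because $\widetilde P\setminus A=(\widetilde P\setminus A)\cap\widetilde P$ has nonnegative measure. Hence $\alpha(\widetilde P)=\sup_{A}\alpha(A)$, so $\widetilde P$ is a positive set in the sense of the preliminaries. The symmetric computation shows $\alpha(\widetilde N)=\inf_A\alpha(A)$, so $\widetilde N$ is a negative set.

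\textbf{Step 3: identify $\gamma^\pm$ with $\alpha^\pm$.} Apply the first characterization, $\alpha^+(A)=\alpha(A\cap P^\alpha)$, with $P^\alpha=\widetilde P$. This yields $\alpha^+=\gamma^+$, and then $\alpha^-=\alpha^+-\alpha=\gamma^-$.

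Alternatively, one can invoke uniqueness of the Jordan decomposition through minimality. For any decomposition $\alpha=\gamma^+-\gamma^-$ into nonnegative measures one has $\gamma^+\ge\alpha^+$, since $\alpha^+(A)=\alpha(A\cap P^\alpha)\le\gamma^+(A\cap P^\alpha)\le \gamma^+(A)$. Symmetrically, $\gamma^+(A)=\alpha(A\cap\widetilde P)\le\alpha^+(A\cap\widetilde P)\le\alpha^+(A)$. Together these give equality.

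The only delicate point is the "unique up to null sets" clause. If a different Hahn set $P^\alpha$ is fixed beforehand, we need $|\alpha|(\widetilde P\,\triangle\,P^\alpha)=0$. This follows because on $\widetilde P\setminus P^\alpha$ the measure $\alpha$ is both $\ge0$ (subset of $\widetilde P$) and $\le0$ (subset of $X\setminus P^\alpha$), so every measurable subset has $\alpha$-measure zero, and likewise for $P^\alpha\setminus\widetilde P$. I do not expect a real obstacle; the proof is a short verification.
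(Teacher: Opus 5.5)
Your proof is correct, but the real work is done in your last two paragraphs, not in Steps 2--3. Step 3 appeals to the equivalence of the characterizations of $P^{\alpha}$ listed in Section~\ref{sec:Preliminaries}. The hypothesis of the lemma is literally the second of those characterizations ($\alpha\ge 0$ on subsets of $\widetilde P$, $\alpha\le 0$ on subsets of $X\setminus\widetilde P$). So if that equivalence is granted, the lemma is immediate and Step 2 is superfluous. If it is not granted, Step 3 is circular, because getting from the supremum property back to $\alpha^+=\alpha(\cdot\cap\widetilde P)$ needs exactly the uniqueness argument you give at the end.

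That final argument is the one that closes the proof. The sets $\widetilde P\setminus P^{\alpha}$ and $P^{\alpha}\setminus\widetilde P$ are $|\alpha|$-null, because $\alpha$ is both $\ge0$ and $\le0$ on their measurable subsets. Hence $\alpha(A\cap\widetilde P)=\alpha(A\cap\widetilde P\cap P^{\alpha})=\alpha(A\cap P^{\alpha})$. This is precisely the uniqueness part of the Hahn decomposition theorem that the paper invokes by citing Folland (3.3).

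Your minimality argument is a genuinely different and equally valid route. It compares measures rather than sets: $\alpha^+\le\gamma^+$ for any splitting $\alpha=\gamma^+-\gamma^-$ into nonnegative measures, and $\gamma^+=\alpha(\cdot\cap\widetilde P)\le\alpha^+$. This is closer in spirit to uniqueness of the Jordan decomposition. It recovers $\gamma^-=\alpha^-$ by subtraction, which is harmless here since all measures in the paper are finite.
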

\begin{proof} Compare with~\cite[p. 86, 3.3]{Folland}.   
\end{proof}

\begin{remark}
    Therefore, from now on, until the end of this section, we will drop the subscript of the measure.   
\end{remark}

Finally we are going to show that the equality 
 \begin{equation*}
  \int_0^T \int_{[-1,0]} f(\tau,s)  \dd \mu_{\tau} (s) \,\dd\tau = \int_{[0,T]\times [-1,0]} f(\tau,s)  \,\dd \nu(\tau,s)
\end{equation*}
holds for any $f\in L_1(\nu)$. We start from two simpler statements, the first for $L_{\infty}(\nu)$ functions and the second for $L_1(\nu)$ under an additional assumption that for $\ell$-a.e.\ $t$ the measures $\mu_t$ are positive.       

\begin{lemma}\label{lemma:L-inf-equality} For any $f\in L_{\infty}(\nu)$ the equality 
    \begin{equation*}
  \int_0^T \int_{[-1,0]} f(\tau,s)  \dd \mu_{\tau} (s) \,\dd\tau = \int_{[0,T]\times [-1,0]} f(\tau,s)\nu(\dd \tau \dd s) 
\end{equation*}
holds.  
\end{lemma}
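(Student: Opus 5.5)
The plan is the standard bootstrap from indicators to simple functions to bounded functions, with a preliminary step settling well-definedness. Here $\nu=\nu_T$, i.e.\ $\nu(A)=\int_0^T\mu_\tau(A_\tau)\,\dd\tau$.

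\emph{Well-definedness on equivalence classes.} For an element of $L_\infty(\nu)$ I will fix a bounded Borel representative $f$. For each $\tau$, the section $f(\tau,\cdot)$ is Borel and bounded, so the inner integral exists. Measurability of
\[
\tau\mapsto\int_{[-1,0]} f(\tau,s)\,\dd\mu_\tau(s)
\]
will come out of the bootstrap below. If two representatives $f,g$ agree $\nu$-a.e., then Lemma~\ref{lemma:p-formula} (with $p(t,s)$ the Borel property $f(t,s)=g(t,s)$) shows that for $\ell$-a.e.\ $\tau$ they agree $\mu_\tau$-a.e. Hence the left-hand side does not depend on the choice of representative, and both sides are defined on the class.

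\emph{Indicators and simple functions.} For $f=\mathbbm 1_A$ with $A\in\mathfrak B([0,T]\times[-1,0])$ the inner integral equals $\mu_\tau(A_\tau)$. This is measurable by Lemma~\ref{lemma:borel-monotone-class-measurable}, and the identity is exactly the definition of $\nu$. By linearity of both sides, the identity then holds for all Borel simple functions, together with measurability in $\tau$ of the inner integral.

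\emph{Bounded Borel functions.} I take simple Borel $f_n$ with $|f_n|\le|f|\le M$ and $f_n\to f$ pointwise everywhere, e.g.\ by uniform dyadic truncation. For each $\tau$, dominated convergence with respect to the finite measure $|\mu_\tau|$ gives
\[
\int f_n(\tau,s)\,\dd\mu_\tau(s)\to\int f(\tau,s)\,\dd\mu_\tau(s).
\]
So the inner integral of $f$ is measurable as a pointwise limit. It is also bounded by $MK$, so dominated convergence in $\tau$ on $[0,T]$ passes the left-hand side to the limit. On the right, dominated convergence with respect to $|\nu|$, which is finite by the previous lemma, gives $\int f_n\,\dd\nu\to\int f\,\dd\nu$. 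Equating the limits yields the claim.

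\emph{Main obstacle.} The only delicate point is the first step, namely that the left-hand side is insensitive to the choice of representative of an $L_\infty(\nu)$ class. This is exactly where Lemma~\ref{lemma:p-formula} is needed. Everything else uses only boundedness: $\|\mu_\tau\|_{\mathrm{TV}}\le K$ and finiteness of $|\nu|$ justify every limit interchange.
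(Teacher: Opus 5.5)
Your proposal is correct and follows essentially the same route as the paper. The identity for indicators is the definition of $\nu$, linearity extends it to simple functions, and a pointwise-convergent, uniformly bounded simple approximation with dominated convergence (on the $\nu$-side, in each inner $\mu_\tau$-integral, and in the outer $\tau$-integral) finishes the argument. Your independence-of-representative step via Lemma~\ref{lemma:p-formula}, together with the explicit choice of an everywhere bounded representative, adds a little rigor that the paper sidesteps by simply regarding $f$ as a fixed Borel function.
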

\begin{proof} For any $A\in \mathfrak{B}([0, T]\times [-1,0])$ we have 
\begin{multline*}
      \int_{[0,T]\times [-1,0]} \mathbbm{1}_{A}(\tau,s) \,\dd \nu(\tau,s)  = \nu(A) 
    \\ = \int_0^{T} \mu_{\tau}((A_{\tau}))\,\dd \tau
    =\int_0^{T}\int_{[-1,0]} \mathbbm{1}_{A}(\tau,s)\, \dd \mu_{\tau} (s) \,\dd \tau.
\end{multline*}
Hence, for any simple function, the statement holds. Let $f\in L_{\infty}(\nu)$ (we think of $f$ as a function, not an equivalence class) and $(f_m)_{m=1}^{\infty}\subset L_{\infty}(\nu)$ be a sequence of simple (uniformly bounded in $m$) functions such that $f_{m}\to f$ and $|f_m|\le |f|$ pointwise for all $(\tau,s)\in [0,T]\times [-1,0]$ (see~\cite[Thm. 2.10(b)]{Folland}). Note that $L_{\infty}(\nu)$ functions are by definition $(\mathfrak{B}([0, T]\times [-1,0]), \mathfrak{B}(\RR))$\nobreakdash-\hspace{0pt}measurable.
A triple use of Lebesgue's dominated convergence theorem will conclude the proof. Indeed, firstly, we have 
\begin{equation*}
     \int_{[0,T]\times [-1,0]} f_{m}(\tau,s)\, \,\dd \nu(\tau,s)\to \int_{[0,T]\times [-1,0]} f(\tau,s) \nu(\dd \tau \dd s ).  
\end{equation*}
On the other hand, for each $0 \le \tau \le T$, by Lebesgue's dominated convergence theorem, we have 
\begin{equation*}
  \int_{ [-1,0]} f_{m}(\tau,s)\, \dd \mu_{\tau} (s)  \to \int_{ [-1,0]} f(\tau,s)\, \dd \mu_{\tau} (s). 
\end{equation*}  
Again, by Lebesgue's dominated convergence theorem, we have
\begin{equation*} 
    \int_0^{T}\int_{ [-1,0]} f_m(\tau,s)\, \dd \mu_{\tau} (s) \,\dd \tau \to \int_0^{T}\int_{ [-1,0]} f(\tau,s)\, \dd \mu_{\tau} (s) \,\dd \tau. 
\end{equation*}
as $m\to\infty$. 
\end{proof} 

\begin{lemma}\label{lemma:L1-equality-positive-measures} For any $\mu\in\mathfrak{m}_T$ such that $\mu_t$ is positive and $f\in L_{1}(\nu)$ the equality 
    \begin{equation*}
  \int_0^T \int_{[-1,0]} f(\tau,s)  \dd \mu_{\tau} (s) \,\dd\tau = \int_{[0,T]\times [-1,0]} f(\tau,s)\nu(\dd \tau  \dd s) 
\end{equation*}
holds.  
\end{lemma}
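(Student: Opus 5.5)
The plan is to reduce to Lemma~\ref{lemma:L-inf-equality} by truncation and pass to the limit with the monotone convergence theorem, applied on both sides. Positivity of $\mu_t$ is exactly what makes monotone convergence available. First note that for positive $\mu_\tau$ the measure $\nu$ is positive, since $\nu(A)=\int_0^T\mu_\tau(A_\tau)\,\dd\tau\ge 0$. Hence $|\nu|=\nu$ and $L_1(\nu)$ consists of Borel functions with $\int|f|\,\dd\nu<\infty$. By splitting $f=f^+-f^-$ and using linearity of both sides, it suffices to treat Borel $f\ge 0$ with $\int f\,\dd\nu<\infty$. We work with a fixed Borel representative.

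For such $f$, define the truncations $f_m:=\min\{f,m\}$. These are bounded Borel functions, so they lie in $L_\infty(\nu)$, and Lemma~\ref{lemma:L-inf-equality} gives
\begin{equation*}
\int_0^T\int_{[-1,0]} f_m(\tau,s)\,\dd\mu_\tau(s)\,\dd\tau=\int_{[0,T]\times[-1,0]} f_m\,\dd\nu .
\end{equation*}
The right-hand side increases to $\int f\,\dd\nu$ by monotone convergence for the positive measure $\nu$.

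For the left-hand side, fix $\tau$ with $\mu_\tau\ge0$; by assumption this holds for every $\tau$, or at least off an $\ell$-null set, which suffices. The functions $s\mapsto f_m(\tau,s)$ are Borel, because sections of Borel functions are Borel, and they increase to $f(\tau,s)$. So monotone convergence gives
\begin{equation*}
g_m(\tau):=\int f_m(\tau,s)\,\dd\mu_\tau(s)\uparrow g(\tau):=\int f(\tau,s)\,\dd\mu_\tau(s)\in[0,\infty].
\end{equation*}
Each $g_m$ is Lebesgue measurable. This is part of the content of Lemma~\ref{lemma:L-inf-equality}: for simple functions it follows from Lemma~\ref{lemma:borel-monotone-class-measurable}, and it passes to bounded limits. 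Hence $g$ is measurable as a pointwise limit. A second application of monotone convergence, now in $\tau$, gives $\int_0^T g_m\,\dd\tau\uparrow\int_0^T g\,\dd\tau$. Equating the two limits yields the identity, with common value $\int f\,\dd\nu<\infty$.

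As a by-product, $g(\tau)<\infty$ for $\ell$-a.e.\ $\tau$, so $f(\tau,\cdot)\in L_1(\mu_\tau)$ for a.e.\ $\tau$. This matters when we recombine $f^+$ and $f^-$: both inner integrals are finite for a.e.\ $\tau$, so the inner integral of $f$ is defined a.e. and equals $g^+(\tau)-g^-(\tau)$. Moreover $\tau\mapsto\int f(\tau,s)\,\dd\mu_\tau(s)$ is integrable, and subtracting the two identities gives the claim.

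The main obstacle I expect is bookkeeping rather than analysis. One must make sure the iterated left-hand side is well defined, meaning the measurability of the $\tau$-integrands and finiteness a.e. This is why I would derive a.e.\ finiteness from the nonnegative case before splitting. Independence of the chosen representative of $f$ is handled by Lemma~\ref{lemma:p-formula}: if two representatives differ on a $\nu$-null set, then for a.e.\ $\tau$ they differ only on a $\mu_\tau$-null set, so the left-hand side is unchanged.
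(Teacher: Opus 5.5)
Your proposal is correct and follows essentially the same route as the paper: approximate $f\ge 0$ from below by bounded functions, apply Lemma~\ref{lemma:L-inf-equality}, pass to the limit on both sides, and then recombine $f^{\pm}$. The main difference is that you use monotone convergence for the outer $\tau$-integral and make the measurability of the inner integrals explicit, whereas the paper uses Fatou's lemma followed by dominated convergence; this makes your version slightly cleaner without changing the argument.
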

\begin{proof}
   Fix $f\in L_1(\nu)$ and in addition assume that $f\ge 0$. Let $(f_{m})_{m=1}^{\infty}$ be a monotone non-decreasing sequence of $L_{\infty}(\nu)$ functions such that $f_{m}\to f$ point-wise on $[0,T]\times [-1,0]$. By Lemma~\ref{lemma:L-inf-equality} we know that for each $m\in \NN$ we have 
   \begin{equation*}
       \int_{[0,T]\times [-1,0]} f_{m}(\tau,s) \, \dd \nu(\tau,s)= \int_0^{T}\int_{[-1,0]} f_n(\tau,s)\, \dd \mu_{\tau} (s) \,\dd \tau. 
\end{equation*} 
By Lebesgue's dominated convergence, theorem we have 
\begin{equation*}
     \int_{[0,T]\times [-1,0]} f_{m}(\tau,s) \, \nu(\dd \tau\dd s )\to  \int_{[0,T]\times [-1,0]} f(\tau,s) \, \nu(\dd \tau\dd s ).
\end{equation*}
On the other hand, for any $m\in\NN$ and $\ell$-a.e.\ $\tau$ the integral   
\begin{equation*}
    \int_{[-1,0]} f_{m}(\tau,s)\, \dd \mu_{\tau} (s)  
\end{equation*}
exists (is finite). Otherwise it would be $\infty$ contradicting  
\begin{equation*}
     \int_{[0,T]\times [-1,0]} f_m(\tau,s)  \,\dd \nu(\tau,s)<\infty. 
\end{equation*}
Also for $\ell$-a.e.\ $\tau$, by Lebesgue's monotone convergence theorem,  
\begin{equation*}
    \int_{[-1,0]} f_m(\tau,s)\, \dd \mu_{\tau} (s)  \to  \int_{[-1,0]} f(\tau,s)\, \dd \mu_{\tau} (s). 
\end{equation*}
Furthermore, a stronger bound can be obtained (we have uniform bound in $m\in\NN$) since we have 
\begin{align*}
    \int_0^{T}\int_{[-1,0]} f_m(\tau,s)\, \dd \mu_{\tau} (s) \,\dd \tau & = \int_{[0,T]\times [-1,0]} f_m(\tau,s) \, \dd \nu(\tau,s)\\ &
    \le  \int_{[0,T]\times [-1,0]} f(\tau,s) \, \nu(\dd \tau\dd s )=:\widetilde{M}.
\end{align*}
Therefore, by the Fatou Lemma, 
\begin{equation*}
     \int_0^{T}\int_{[-1,0]} f(\tau,s)\, \dd \mu_{\tau} (s) \,\dd \tau \le \liminf_{m\to\infty} \int_0^{T}\int_{[-1,0]} f_m(\tau,s)\, \dd \mu_{\tau} (s) \,\dd \tau  \le \widetilde{M}. 
\end{equation*}
Hence, the assumptions of Lebesgue's dominated convergence theorem are fulfilled, which concludes the proof for non-negative functions, as
\begin{equation*}
     \int_0^{T}\int_{[-1,0]} f_m(\tau,s)\, \dd \mu_{\tau} (s) \,\dd \tau \to\int_0^{T}\int_{[-1,0]} f(\tau,s)\, \dd \mu_{\tau} (s) \,\dd \tau.
\end{equation*}
In general, for not necessarily non-negative $f\in L_1(\nu)$ we write 
\begin{multline*}
        \int_{[0,T]\times [-1,0]} f(\tau,s) \, \nu(\dd \tau  \dd s ) \\ = \int_{[0,T]\times [-1,0]} f^+(\tau,s) \, \nu(\dd \tau  \dd s ) - \int_{[0,T]\times [-1,0]} f^-(\tau,s) \, \nu(\dd \tau  \dd s ) \\ 
         = \int_0^{T}\int_{[-1,0]} f^+(\tau,s)\, \dd \mu_{\tau} (s) \,\dd \tau -  \int_0^{T}\int_{[-1,0]} f^-(\tau,s)\, \dd \mu_{\tau} (s) \,\dd \tau \\ 
         = \int_0^{T}\int_{[-1,0]} f(\tau,s)\, \dd \mu_{\tau} (s) \,\dd \tau. 
\end{multline*}
\end{proof}
We are going to show that the assumption concerning positivity in Lemma~\ref{lemma:L1-equality-positive-measures} can be dropped. However, we have to be careful about the existence of the inner integral for $\ell$-a.e.\ $\tau$. We need a general result concerning the non-negativity of a kernel (disintegration) of positive measure.      

\begin{lemma}\label{lemma:mu+/-_desint}~
 \begin{equation*}
  \nu^{\pm}= \int_{0}^T \mu_{\tau}^{\pm}(\cdot_{\tau})\,\dd \tau. 
\end{equation*} 
\end{lemma}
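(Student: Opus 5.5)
The plan is to identify explicit positive and negative sets for $\nu$ built from the Hahn sets of the individual $\mu_\tau$, and then to invoke Lemma~\ref{lemma:Hahn-fact}. Write $\lambda^{\pm}(A):=\int_0^T\mu_\tau^{\pm}(A_\tau)\,\dd\tau$. Since $\mu_\tau^{\pm}=(|\mu_\tau|\pm\mu_\tau)/2$, the integrands are measurable by Lemma~\ref{lemma:borel-monotone-class-measurable}, provided we know that $\tau\mapsto|\mu_\tau|(A_\tau)$ is measurable. That fact follows from the same monotone class argument: on the ring $\mathcal R$ one only needs $\tau\mapsto|\mu_\tau|([a,b))$, and this is a countable supremum of $\langle f,\mu_\tau\rangle$ over $f\in C([-1,0])$ with $|f|\le 1$ and support in the interval, taken from a countable dense family (as in Lemma~\ref{lemma:mu-measurable}), up to endpoint approximation. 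Hence $\lambda^\pm$ are positive finite measures with $\nu=\lambda^+-\lambda^-$, by the same proof that showed $\nu_t$ is a signed measure.

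Next, I need a Borel set $P\subset[0,T]\times[-1,0]$ whose sections $P_\tau$ are positive sets of $\mu_\tau$ for a.e.\ $\tau$. I would construct it via Radon--Nikodym densities rather than by choosing sections directly. Since $\lambda^{+}\ll\lambda:=\lambda^++\lambda^-$, set $g:=\dd\lambda^+/\dd\lambda\in[0,1]$, a Borel function, and define $P:=\{g=1\}$ and $N:=P^c$. The key step is to show that, for a.e.\ $\tau$, $g(\tau,\cdot)=\dd\mu_\tau^+/\dd|\mu_\tau|$ holds $|\mu_\tau|$-a.e. To see this, note that for every Borel $A$,
\begin{equation*}
\int_0^T\int \mathbbm 1_A\,g\,\dd|\mu_\tau|\,\dd\tau=\lambda^+(A)=\int_0^T\mu_\tau^+(A_\tau)\,\dd\tau,
\end{equation*}
where the left equality uses the analogue of Lemma~\ref{lemma:L-inf-equality} for the positive kernel $|\mu_\tau|$ and the bounded function $\mathbbm 1_A g$. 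Thus the kernels $\tau\mapsto g(\tau,\cdot)|\mu_\tau|$ and $\tau\mapsto\mu_\tau^+$ integrate to the same measure. Applying Lemma~\ref{lemma:desint-positive-is-positive} to their difference and its negative (exactly as in the proof of Lemma~\ref{lemma:p-formula}) shows that they coincide for a.e.\ $\tau$. Since $\mu_\tau^+\perp\mu_\tau^-$, the density $\dd\mu_\tau^+/\dd|\mu_\tau|$ takes only the values $0$ and $1$. Hence, for a.e.\ $\tau$, $P_\tau$ is a positive set and $N_\tau$ a negative set for $\mu_\tau$, up to $|\mu_\tau|$-null sets.

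Finally, for any Borel $Y$ we have $\nu(Y\cap P)=\int_0^T\mu_\tau(Y_\tau\cap P_\tau)\,\dd\tau=\int_0^T\mu_\tau^+(Y_\tau)\ldots$. More precisely, because $P_\tau$ is positive for $\mu_\tau$, the integrand $\mu_\tau(Y_\tau\cap P_\tau)$ is nonnegative and equals $\mu^+_\tau(Y_\tau)$ for a.e.\ $\tau$; similarly $\nu(Y\cap N)=-\int_0^T\mu_\tau^-(Y_\tau)\,\dd\tau\le 0$. Lemma~\ref{lemma:Hahn-fact} then gives $\nu^+=\nu(\cdot\cap P)=\lambda^+$ and $\nu^-=\lambda^-$, which is the claim. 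The main obstacle is the middle step, namely producing a jointly Borel choice of Hahn sets. A measurable selection of the sections is not available directly, which is why I route the construction through the Radon--Nikodym density of $\lambda^+$ with respect to $\lambda$ and then identify that density sectionwise using Lemma~\ref{lemma:desint-positive-is-positive}.
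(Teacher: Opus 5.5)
Your proof is correct, but it runs in the opposite direction from the paper's. You build $\lambda^{\pm}$ from the sectionwise Jordan parts. You then manufacture a jointly Borel set $P=\{g=1\}$ through the Radon--Nikodym density $g=\dd\lambda^{+}/\dd\lambda$, identify $g(\tau,\cdot)$ with $\dd\mu_\tau^{+}/\dd|\mu_\tau|$ via Lemma~\ref{lemma:desint-positive-is-positive}, and only at the end apply Lemma~\ref{lemma:Hahn-fact} to $\nu$. The paper pushes down instead of lifting. It starts from the Hahn sets $P^{\nu},N^{\nu}$ of $\nu$ itself, which are Borel subsets of $[0,T]\times[-1,0]$ by construction, and writes $\nu^{+}(A)=\nu(A\cap P^{\nu})=\int_0^T\mu_\tau(A_\tau\cap P^{\nu}_\tau)\,\dd\tau$. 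It applies Lemma~\ref{lemma:desint-positive-is-positive} to the kernels $\mu_\tau(\cdot\cap P^{\nu}_\tau)$ and $-\mu_\tau(\cdot\cap N^{\nu}_\tau)$. Then it uses Lemma~\ref{lemma:Hahn-fact} sectionwise, for each $\mu_\tau$, to get $\mu_\tau(\cdot\cap P^{\nu}_\tau)=\mu_\tau^{+}$ and $-\mu_\tau(\cdot\cap N^{\nu}_\tau)=\mu_\tau^{-}$ for a.e.\ $\tau$. So the obstacle you single out, a jointly Borel choice of sectionwise Hahn sets, is removed for free by the Hahn decomposition of $\nu$; your set $\{g=1\}$ turns out a posteriori to be such a Hahn set of $\nu$. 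The paper's route is shorter and needs neither the Radon--Nikodym theorem nor an analogue of Lemma~\ref{lemma:L-inf-equality} for the kernel $|\mu_\tau|$. Even the measurability of $\tau\mapsto\mu_\tau^{\pm}(A_\tau)$ is automatic there, since these functions agree a.e.\ with $\mu_\tau((A\cap P^{\nu})_\tau)$ and $-\mu_\tau((A\cap N^{\nu})_\tau)$, which are measurable by Lemma~\ref{lemma:borel-monotone-class-measurable}. Your route has to establish the measurability of $\tau\mapsto|\mu_\tau|(A_\tau)$ separately, via the supremum formula on intervals, and you do this correctly. Doing so also keeps your argument non-circular: the paper's Lemma~\ref{lemma:nu-modulus} is derived from the present lemma, so you could not have cited it. In exchange for the extra work, your approach yields slightly more than the statement asks, namely a jointly Borel version of the sectionwise densities $\dd\mu_\tau^{+}/\dd|\mu_\tau|$.
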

\begin{proof}
   Let $P^{\nu}$ and $N^{\nu}$ be the positive and negative sets of the measure $\nu$.  For any $A\in \mathfrak{B}([0,T]\times[-1,0])$, from the definition of the Hahn--Jordan decomposition and the measure $\nu$ we can write 
   \begin{equation*}
       \nu^+(A)= \nu(A\cap P^{\nu}) = \int_{0}^T \mu_{\tau}(A_{\tau}\cap P^{\nu}_{\tau})\,\dd \tau. 
   \end{equation*}
   From Lemma~\ref{lemma:desint-positive-is-positive} we know that the measure $\mu_{\tau}(\cdot \cap P^{\nu}_{\tau})$ is a positive measure on $\mathfrak{B} ([0,T])$ for $\ell$-a.e.\ $\tau\in [0,T]$. Similarly, $\mu_{\tau}(\cdot \cap N^{\nu}_{\tau})\le 0$ for $\ell$-a.e.\ $\tau\in [0,T]$. Therefore, by Lemma~\ref{lemma:Hahn-fact}  
   \begin{equation*}
    \mu_{\tau}(\cdot \cap P^{\nu}_{\tau}) = \mu_{\tau}^+ \quad \& \quad   -\mu_{\tau}(\cdot \cap N^{\nu}_{\tau}) = \mu^-
   \end{equation*}
   for $\ell$-a.e.\ $\tau\in [0,T]$. 
\end{proof}
\begin{lemma}
       For any positive set $P^{\nu}$ of the measure $\nu$ for $\ell$-a.e. $\tau\in [0,T]$ the $\tau$-section of $P^{\nu}$ (denoted by $P^{\nu}_{\tau}$) is a positive set of the measure $\mu_{\tau}$. The same for negative sets.      
\end{lemma}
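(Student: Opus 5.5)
Fix a positive set $P^{\nu}$ of $\nu$; the complementary set $N^{\nu}=([0,T]\times[-1,0])\setminus P^{\nu}$ is then a negative set, and both are Borel. The plan is to show that for $\ell$-a.e.\ $\tau$ the sections $P^{\nu}_\tau$ and $N^{\nu}_\tau$ satisfy the hypotheses of Lemma~\ref{lemma:Hahn-fact} for the measure $\mu_\tau$ on $\mathfrak B([-1,0])$. Their disjointness and covering of $[-1,0]$ hold for every $\tau$, since sections commute with complements. Lemma~\ref{lemma:Hahn-fact} then identifies $P^{\nu}_\tau$ and $N^{\nu}_\tau$ as positive and negative sets of $\mu_\tau$. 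If one prefers not to take $N^{\nu}$ as the exact complement, any negative set differs from it by a $|\nu|$-null set. By Lemma~\ref{lemma:p-formula}, applied to the property ``$(\tau,s)\notin$ symmetric difference'', such a set has $|\mu_\tau|$-null sections for a.e.\ $\tau$, so the conclusion transfers.

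The key step is to prove that $\alpha_\tau:=\mu_\tau(\cdot\cap P^{\nu}_\tau)$ is a positive measure for $\ell$-a.e.\ $\tau$. I will apply Lemma~\ref{lemma:desint-positive-is-positive} with $\beta:=\nu^+=\nu(\cdot\cap P^{\nu})$. First, for every Borel $A$ we have $(A\cap P^{\nu})_\tau=A_\tau\cap P^{\nu}_\tau$. Hence $\beta(A)=\int_0^T\alpha_\tau(A_\tau)\,\dd\tau$, and the map $\tau\mapsto\alpha_\tau(A_\tau)=\mu_\tau((A\cap P^{\nu})_\tau)$ is measurable by Lemma~\ref{lemma:borel-monotone-class-measurable}. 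Second, $\|\alpha_\tau\|_{\mathrm{TV}}\le K$, so $\alpha_\tau\in\mathfrak m$. Since $\beta\ge0$, Lemma~\ref{lemma:desint-positive-is-positive} gives $\alpha_\tau\ge0$ outside an $\ell$-null set $E_1$. The same argument applied to $-\nu^-=\nu(\cdot\cap N^{\nu})$, with the family $-\mu_\tau(\cdot\cap N^{\nu}_\tau)$, gives $\mu_\tau(\cdot\cap N^{\nu}_\tau)\le0$ outside an $\ell$-null set $E_2$.

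For $\tau\notin E_1\cup E_2$ this says that $\mu_\tau(Y\cap P^{\nu}_\tau)\ge0$ and $\mu_\tau(Y\cap N^{\nu}_\tau)\le0$ for all Borel $Y\subset[-1,0]$. Lemma~\ref{lemma:Hahn-fact} then concludes the argument. The statement for negative sets is symmetric, or it follows by applying the result to $-\mu$, whose integrated measure is $-\nu$. This argument essentially repeats the one in the proof of Lemma~\ref{lemma:mu+/-_desint}, now with attention to the arbitrariness of the chosen positive set.

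The main obstacle is the ``up to null sets'' nonuniqueness of positive sets. One has to make sure that the exceptional $\ell$-null set is allowed to depend on the chosen $P^{\nu}$, which the statement permits. One also has to check that Lemma~\ref{lemma:desint-positive-is-positive} requires measurability of $\tau\mapsto\alpha_\tau(A_\tau)$ for all two-dimensional Borel $A$, not only for product sets. This is exactly what Lemma~\ref{lemma:borel-monotone-class-measurable} supplies after intersecting $A$ with $P^{\nu}$.
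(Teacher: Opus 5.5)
Your proposal is correct and takes the paper's route. The paper's proof just points back to the argument of Lemma~\ref{lemma:mu+/-_desint}: Lemma~\ref{lemma:desint-positive-is-positive} applied to $\nu^{\pm}$ written through the sections of $P^{\nu}$ and $N^{\nu}$, followed by Lemma~\ref{lemma:Hahn-fact}. You reproduce that argument and make explicit what the paper leaves implicit, namely the measurability via Lemma~\ref{lemma:borel-monotone-class-measurable}, the total-variation bound, and the dependence of the exceptional null set on the chosen $P^{\nu}$.
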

\begin{proof}
   The statement is a translation of the results from Lemma~\ref{lemma:mu+/-_desint}. Indeed, let $P^{\nu}$ be the positive set of the measure $\nu$. From the proof of Lemma~\ref{lemma:mu+/-_desint} we know that for $\ell$-a.e. $\tau\in [0,T]$ we have $\mu_{\tau}(\cdot \cap P^{\nu}_{\tau}) = \mu^+$. Hence, straight by the definition of positive set, $P^{\nu}_{\tau}$ is the one for the measure $\mu_{\tau}$. 
\end{proof}
\begin{lemma}\label{lemma:nu-mi-l1-for_+/-}
    For any $f\in L_1(\nu)$ 
 \begin{equation*}
  \int_0^T \int_{[-1,0]} f(\tau,s) \dd \mu_{\tau}^{\pm} (s) \,\dd\tau = \int_{[0,T]\times [-1,0]} f(\tau,s) \nu^{\pm} (\dd \tau\dd s ). 
\end{equation*} 
\begin{proof}
    The statement follows from Lemma~\ref{lemma:L1-equality-positive-measures} and Lemma~\ref{lemma:mu+/-_desint}.
\end{proof}
\end{lemma}
\begin{theorem}\label{thm:integrated-eq}
    For any $f\in L_1(\nu)$ 
 \begin{equation*}
  \int_0^T \int_{[-1,0]} f(\tau,s) \dd \mu_{\tau} (s) \,\dd\tau = \int_{[0,T]\times [-1,0]} f(\tau,s) \, \dd \nu(\tau,s) 
\end{equation*} 
\end{theorem}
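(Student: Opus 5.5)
The plan is to reduce the statement to the positive and negative parts, using Lemma~\ref{lemma:nu-mi-l1-for_+/-}. Fix $f\in L_1(\nu)$. Since $|\nu|=\nu^++\nu^-$, we have $f\in L_1(\nu^+)\cap L_1(\nu^-)$. By Lemma~\ref{lemma:mu+/-_desint} the families $(\mu^\pm_\tau)$ disintegrate $\nu^\pm$. Applying Lemma~\ref{lemma:L1-equality-positive-measures} to $|f|$ with $\mu^\pm$ then gives
\begin{equation*}
\int_0^T\int_{[-1,0]}|f(\tau,s)|\,\dd\mu^\pm_\tau(s)\,\dd\tau=\int_{[0,T]\times[-1,0]}|f|\,\dd\nu^\pm<\infty .
\end{equation*}
Hence, for $\ell$-a.e.\ $\tau$, the function $f(\tau,\cdot)$ is integrable with respect to both $\mu^+_\tau$ and $\mu^-_\tau$, and therefore with respect to $|\mu_\tau|=\mu^+_\tau+\mu^-_\tau$. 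This settles the existence of the inner integral for a.e.\ $\tau$, which is the delicate point the paper flags.

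There is one subtlety. The inner integral $\int f\,\dd\mu_\tau$ equals $\int f\,\dd\mu_\tau^+-\int f\,\dd\mu_\tau^-$ only if $\mu^\pm_\tau$ is really the Jordan decomposition of $\mu_\tau$. Lemma~\ref{lemma:mu+/-_desint} and the subsequent lemma give exactly this for $\ell$-a.e.\ $\tau$. Concretely, $\mu_\tau(\cdot\cap P^\nu_\tau)=\mu_\tau^+$ and $-\mu_\tau(\cdot\cap N^\nu_\tau)=\mu_\tau^-$, so the decomposition $\mu_\tau=\mu_\tau^+-\mu_\tau^-$ holds off a null set of times. I would also verify measurability of $\tau\mapsto\int f(\tau,s)\,\dd\mu^\pm_\tau(s)$. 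This follows from the nonnegative case in Lemma~\ref{lemma:L1-equality-positive-measures}, where it arises as a monotone limit of measurable functions via Lemma~\ref{lemma:borel-monotone-class-measurable}.

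With these facts in hand, the identity follows from linearity:
\begin{equation*}
\int_0^T\!\!\int f\,\dd\mu_\tau\,\dd\tau=\int_0^T\!\!\int f\,\dd\mu^+_\tau\,\dd\tau-\int_0^T\!\!\int f\,\dd\mu^-_\tau\,\dd\tau=\int f\,\dd\nu^+-\int f\,\dd\nu^-=\int f\,\dd\nu .
\end{equation*}
The middle equality is Lemma~\ref{lemma:nu-mi-l1-for_+/-}. Splitting the outer integral is legitimate because both pieces are finite, by the bound in the first display.

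The main obstacle is the bookkeeping of null sets. We must ensure that the exceptional $\tau$-set where $\mu^\pm_\tau$ fails to be the Jordan decomposition, and the set where $f(\tau,\cdot)\notin L_1(|\mu_\tau|)$, are both $\ell$-null. We also need the result to be independent of the representative of $f$. For the latter, Lemma~\ref{lemma:p-formula} shows that a $\nu$-null modification of $f$ changes $f(\tau,\cdot)$ only on a $\mu_\tau$-null set for a.e.\ $\tau$.
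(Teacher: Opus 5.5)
Your proposal is correct and takes the paper's route: the paper proves the theorem in one line by subtracting the two identities of Lemma~\ref{lemma:nu-mi-l1-for_+/-}, and your argument is that subtraction, with the a.e.\ integrability of $f(\tau,\cdot)$ against $|\mu_\tau|$ and the splitting of the outer integral made explicit. The ``subtlety'' you flag is vacuous, though: $\mu_\tau^\pm$ are by definition the Jordan parts of $\mu_\tau$, so $\mu_\tau=\mu_\tau^+-\mu_\tau^-$ holds for every $\tau$; what Lemma~\ref{lemma:mu+/-_desint} actually supplies is that these same parts disintegrate $\nu^\pm$.
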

\begin{proof}
    Follows directly from Lemma~\ref{lemma:nu-mi-l1-for_+/-}. 
\end{proof}
\begin{lemma} \label{lemma:nu-modulus}
For any $f\in L_1(\nu)$  
\begin{equation*}
  \int_0^T \int_{[-1,0]} f(\tau,s) \dd |\mu_{\tau}| (s) \,\dd\tau = \int_{[0,T]\times [-1,0]} f(\tau,s) |\nu|(\dd \tau\dd s )  
\end{equation*}  
\end{lemma}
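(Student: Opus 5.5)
The plan is to reduce the statement to Lemma~\ref{lemma:nu-mi-l1-for_+/-}, using the additive decompositions $|\nu|=\nu^++\nu^-$ and $|\mu_\tau|=\mu_\tau^++\mu_\tau^-$. The integrals with respect to $\nu^\pm$ and $\mu_\tau^\pm$ are already known to agree. The only points needing care are integrability, and the fact that the inner integrals exist only for $\ell$-a.e.\ $\tau$.

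First, I will note that $f\in L_1(\nu)$ means, by the convention of Section~\ref{sec:Preliminaries}, that $\int|f|\,\dd|\nu|<\infty$. Hence $f\in L_1(\nu^+)$ and $f\in L_1(\nu^-)$, because $\nu^\pm\le|\nu|$. Lemma~\ref{lemma:nu-mi-l1-for_+/-} applies to $f$, and separately to $f^+$ and $f^-$, with respect to each of $\nu^+$ and $\nu^-$. Its proof goes through Lemma~\ref{lemma:L1-equality-positive-measures} applied to the positive paths $(\mu^\pm_\tau)$, whose integrated measures are $\nu^\pm$ by Lemma~\ref{lemma:mu+/-_desint}. That proof gives finiteness of the integrals, $\int_0^T\int f^\pm\,\dd\mu^\pm_\tau\,\dd\tau<\infty$ (via the Fatou bound). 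Consequently, for $\ell$-a.e.\ $\tau$ the function $f(\tau,\cdot)$ lies in $L_1(\mu_\tau^+)\cap L_1(\mu_\tau^-)=L_1(|\mu_\tau|)$. For those $\tau$,
\begin{equation*}
\int_{[-1,0]} f(\tau,s)\,\dd|\mu_\tau|(s)=\int_{[-1,0]} f(\tau,s)\,\dd\mu^+_\tau(s)+\int_{[-1,0]} f(\tau,s)\,\dd\mu^-_\tau(s).
\end{equation*}
Each summand is $\ell$-integrable in $\tau$, since it is dominated by $\int|f|\,\dd\mu^\pm_\tau$ and the latter has finite $\dd\tau$-integral.

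Next, I will integrate this identity over $[0,T]$, use linearity of the Lebesgue integral, and apply Lemma~\ref{lemma:nu-mi-l1-for_+/-} to each term. This gives $\int f\,\dd\nu^++\int f\,\dd\nu^-$, which equals $\int f\,\dd|\nu|$ by the definition $|\nu|=\nu^++\nu^-$.

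I expect the main obstacle to be the bookkeeping of null sets and measurability of the inner integrals. Lemma~\ref{lemma:mu+/-_desint} identifies $\mu_\tau(\cdot\cap P^\nu_\tau)$ with $\mu^+_\tau$ only for $\ell$-a.e.\ $\tau$, so $\tau\mapsto\int f\,\dd\mu_\tau^\pm$ is defined only off a null set. I will handle this by discarding the union of the finitely many exceptional $\ell$-null sets, which does not change the outer integrals. Measurability of these maps comes from the monotone class argument of Lemma~\ref{lemma:borel-monotone-class-measurable}, applied to the sets $A\cap P^\nu$ and $A\cap N^\nu$, followed by approximation by simple functions.
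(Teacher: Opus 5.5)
Your proposal is correct and takes the same route as the paper, whose proof is the one-line remark that the identity follows from Lemma~\ref{lemma:nu-mi-l1-for_+/-} by adding the $\nu^+$ and $\nu^-$ versions, using $|\nu|=\nu^++\nu^-$ and $|\mu_\tau|=\mu_\tau^++\mu_\tau^-$. Your extra bookkeeping makes explicit what the paper leaves implicit: you get a.e.\ integrability of $f(\tau,\cdot)$ with respect to $|\mu_\tau|$ from finiteness of $\int_0^T\int|f|\,\dd\mu^\pm_\tau\,\dd\tau$, and measurability of the inner integrals off a null set from the monotone class argument applied to $A\cap P^\nu$, $A\cap N^\nu$ together with completeness of Lebesgue measure.
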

\begin{proof}
     Follows directly from Lemma~\ref{lemma:nu-mi-l1-for_+/-}. 
\end{proof}

\section{Integral formulation via the integrated measure}\label{sec:Integral-formulation}

The integrated measure provides an alternative way to establish
the contraction property of the continuation operator.
Although existence and uniqueness have already been proved,
the following argument shows how the same conclusion follows
directly from the measure formulation.

\begin{proposition}
\label{prop:integrated-contraction}
Let $\mu\in\mathfrak{m}_T$ and $u_0\in C([-1,0])$.
For every $\kappa>K$, the continuation operator $\mathfrak{G}_\mu:
C_{u_0}([0,T])\to C_{u_0}([0,T])$
is a contraction with respect to $d_\kappa(z,w)$. 
More precisely for any $z,w\in C_{u_0}([0,T])$ we have 
\begin{equation*}
d_\kappa(\mathfrak{G}_\mu z,\mathfrak{G}_\mu w)
\le
\frac{K}{\kappa}\,d_\kappa(z,w). 
\end{equation*}
In particular, the contraction constant is independent of
$\mu$ and $u_0$.
\end{proposition}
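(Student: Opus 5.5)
The plan is to repeat the estimate of Proposition~\ref{prop:contraction-ODEs}, this time written entirely in terms of the integrated measure. For a fixed $t\in[0,T]$, consider the restriction $\nu_t$ of $\nu$ to $[0,t]\times[-1,0]$. For $z,w\in C_{u_0}([0,T])$, both extended by $u_0$ on $[-1,0]$, the difference $z-w$ vanishes on $[-1,0]$. The function
\begin{equation*}
f(\tau,s):=z(\tau+s)-w(\tau+s)
\end{equation*}
is continuous on $Q$, hence bounded and Borel, and so belongs to $L_\infty(\nu)\subset L_1(\nu)$.

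By Lemma~\ref{lemma:L-inf-equality}, or Theorem~\ref{thm:integrated-eq}, applied with $f\mathbbm{1}_{[0,t]\times[-1,0]}$, we get
\begin{equation*}
\mathfrak{G}_\mu z[t]-\mathfrak{G}_\mu w[t]=\int_{[0,t]\times[-1,0]} f\,\dd\nu .
\end{equation*}
Taking absolute values, this is bounded by $\int_{[0,t]\times[-1,0]}|f|\,\dd|\nu|$.

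Next we bound the integrand pointwise. Whenever $\tau+s\ge 0$ we have $|f(\tau,s)|\le e^{\kappa(\tau+s)}d_\kappa(z,w)\le e^{\kappa\tau}d_\kappa(z,w)$, while $f(\tau,s)=0$ when $\tau+s<0$. Hence $|f(\tau,s)|\le e^{\kappa\tau}d_\kappa(z,w)$ on all of $Q$.

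The function $(\tau,s)\mapsto e^{\kappa\tau}\mathbbm{1}_{[0,t]}(\tau)$ is bounded and Borel. Lemma~\ref{lemma:nu-modulus} therefore moves the $|\nu|$-integral back to iterated form:
\begin{equation*}
\int_{[0,t]\times[-1,0]} e^{\kappa\tau}\,\dd|\nu|=\int_0^t e^{\kappa\tau}\|\mu_\tau\|_{\mathrm{TV}}\,\dd\tau\le K\frac{e^{\kappa t}-1}{\kappa}.
\end{equation*}
Multiplying by $e^{-\kappa t}$ and taking the supremum over $t\in[0,T]$ gives $d_\kappa(\mathfrak{G}_\mu z,\mathfrak{G}_\mu w)\le \frac{K}{\kappa}d_\kappa(z,w)$. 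This constant is strictly less than $1$ for $\kappa>K$ and depends on neither $\mu$ nor $u_0$. We should also check that $\mathfrak{G}_\mu$ maps $C_{u_0}([0,T])$ into itself: continuity in $t$ follows since the integrand is bounded by $K\max\{\|u_0\|_C,\|z\|_C\}$, and the value at $0$ is $u_0(0)$.

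The only delicate point is justifying the use of $|\nu|$ in place of the iterated integral against $|\mu_\tau|$. Lemma~\ref{lemma:nu-modulus} handles exactly this, via Lemma~\ref{lemma:mu+/-_desint}, so no further obstacle is expected. The weight is chosen as $e^{\kappa\tau}$ rather than $e^{\kappa(\tau+s)}$ so that it depends on $\tau$ alone, which makes the $|\nu|$-integral reduce cleanly to $\int_0^t e^{\kappa\tau}\|\mu_\tau\|_{\mathrm{TV}}\,\dd\tau$.
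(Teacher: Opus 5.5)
Your proof is correct and is essentially the paper's argument: you write $\mathfrak{G}_\mu z[t]-\mathfrak{G}_\mu w[t]$ as an integral against the integrated measure, pass to $|\nu|$, and return to the iterated form via Lemma~\ref{lemma:nu-modulus}, with $s\le 0$ giving $e^{\kappa(\tau+s)}\le e^{\kappa\tau}$. The only difference is minor: the paper first pushes $\nu_t$ forward under $(\tau,s)\mapsto\tau+s$ and uses $|(\cdot+\cdot)_*\nu_t|\le(\cdot+\cdot)_*|\nu_t|$ before pulling back, whereas you stay on $[0,t]\times[-1,0]$ and apply the pointwise bound there, which avoids that extra inequality.
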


\begin{proof}
Fix $z,w\in C_{u_0}([0,T])$ and extend both functions to
$[-1,0]$ by the same initial history $u_0$.
For $t\in[0,T]$, the representation using the integrated
measure gives
\begin{equation*}
\mathfrak{G}_\mu (z)[t]- \mathfrak{G}_\mu (w)[t]
=
\int_{[-1,t]}(z(r)-w(r))\,
\dd((\cdot+\cdot)_*\nu_t)(r).
\end{equation*}
Here $(\cdot+\cdot)_*\nu_t$ denotes the pushforward of
$\nu_t$ under $(\tau,s)\mapsto\tau+s$.
The integrand vanishes for $r\in[-1,0]$, while for any $r\in[0,T]$ we have 
\begin{equation*}
|z(r)-w(r)|
\le e^{\kappa r}d_\kappa(z,w). 
\end{equation*}
Consequently,
\begin{equation*}
e^{-\kappa t}
|\mathfrak{G}_\mu (z)[t]- \mathfrak{G}_\mu (w)[t]|
\le
d_\kappa(z,w)
\int_{[0,t]}e^{\kappa(r-t)}
\,\dd |(\cdot+\cdot)_*\nu_t|(r).
\end{equation*}
It remains to estimate the last integral. Using $|(\cdot+\cdot)_*\nu_t|
\le
(\cdot+\cdot)_*|\nu_t|$
and the representation of $|\nu_t|$ established in Lemma~\ref{lemma:nu-modulus}, we obtain
\begin{equation*}
\begin{aligned}
&\int_{[0,t]}e^{\kappa(r-t)}
\,\dd|(\cdot+\cdot)_*\nu_t|(r)
\\
&\quad\le
\int_{[0,t]\times[-1,0]}
\mathbbm{1}_{[0,t]}(\tau+s)e^{\kappa(\tau+s-t)}
\,\dd|\nu_t|(\tau,s)
\\
&\quad=
\int_0^t\int_{[-1,0]}
\mathbbm{1}_{[0,t]}(\tau+s)e^{\kappa(\tau+s-t)}
\,\dd|\mu_\tau|(s)\,\dd\tau
\\
&\quad\le
\int_0^t e^{\kappa(\tau-t)}
\|\mu_\tau\|_{\mathrm{TV}}\,\dd\tau
\\
&\quad\le
K\int_0^t e^{\kappa(\tau-t)}\,\dd\tau
=
\frac{K}{\kappa}(1-e^{-\kappa t})
\le \frac{K}{\kappa}.
\end{aligned}
\end{equation*}
The second inequality uses $s\le0$ and $\kappa>0$. Taking the supremum over $t\in[0,T]$ proves the claimed
estimate. Since $K/\kappa<1$, the operator
$\mathfrak{G}_\mu$ is a contraction.
\end{proof}

The Banach fixed-point theorem therefore recovers the unique
fixed point of $\mathfrak{G}_\mu$, and hence the unique solution
of the original initial-value problem.

{
\section{Measurable linear skew-product semidynamical systems}\label{sec:skew-product}
We next consider coefficient paths generated by a
measure-preserving base flow.
The deterministic results yield linear solution operators,
the cocycle identity, and continuity in time and
the initial history.
Measurability with respect to the base point requires
a separate analysis of the coefficient space and
is established in Section~\ref{sec:Measurable-structures}.

\subsection{Measurable dynamical systems}\label{subsec:MDS}
We write $\RR^{+}$ for $[0, \infty)$. For a metric space $S$ by $\mathfrak{B}(S)$ we denote the $\sigma$\nobreakdash-\hspace{0pt}algebra of Borel subsets of $S$. A probability space is a triple $\OFP$, where $\Omega$ is a set, $\mathfrak{F}$ is a $\sigma$\nobreakdash-\hspace{0pt}algebra of subsets of $\Omega$, and $\PP$ is a probability measure defined for all $F \in \mathfrak{F}$.  We always assume that the measure $\PP$ is complete.
\par\smallskip
A {\em \textup{(}two\nobreakdash-\hspace{0pt}sided\textup{)}measurable dynamical system} on the probability space $\OFP$ is a $(\mathfrak{B}(\RR) \otimes \mathfrak{F},\mathfrak{F})$\nobreakdash-\hspace{0pt}measurable mapping $\theta\colon\RR\times \Omega\to \Omega$ such that
\begin{itemize}[label=\raisebox{0.25ex}{\tiny$\bullet$}]
\item $\theta(0,\omega)=\omega$ for any $\omega  \in \Omega$,
\item $\theta(t+s,w)=\theta(t,\theta(s,\omega))$ for any $\omega  \in \Omega$ and $t,\,s \in \RR$.  
\end{itemize}
We write $\theta(t,\omega)$ as $\theta_t\omega$. Also, we usually denote measurable dynamical systems by $(\OFP,(\theta_{t})_{t \in \RR})$ or simply by $(\theta_{t})_{t \in \RR}$. 
\par
A {\em metric dynamical system} is a measurable dynamical system $(\OFP,(\theta_{t})_{t \in \RR})$
such that for each $t\in\RR$  the mapping $\theta_t\colon \Omega\to\Omega$ is $\PP$-preserving (i.e., $\PP(\theta_t^{-1}(F))=\PP(F)$ for any $F\in\mathfrak{F}$ and $t\in\RR$).
A subset $\Omega'\subset\Omega$ is \emph{invariant} if $\theta_t(\Omega')=\Omega'$ for all $t\in\RR$, and the metric dynamical system is said to be \emph{ergodic} if for any invariant subset  $F \in \mathfrak{F}$, either $\PP(F) = 1$ or $\PP(F) = 0$. Throughout the paper we will assume that $\PP$ is ergodic.

\subsection{Measurable linear skew-product semidynamical systems}\label{subsec:MLSPS} 

By a {\em measurable linear skew-product semidynamical system} or {\em semiflow}, 
$\Phi = \allowbreak ((U_\omega(t))_{\omega \in \Omega, t \in \RR^{+}}, \allowbreak (\theta_t)_{t\in\RR})$ on  $X$ covering a metric dynamical system $(\theta_{t})_{t \in \RR}$ we understand a $(\mathfrak{B}(\RR^{+}) \otimes \mathfrak{F} \otimes \mathfrak{B}(X), \mathfrak{B}(X))$\nobreakdash-\hspace{0pt}measurable
mapping
\begin{equation*}
 \RR^{+} \times \Omega \times X \ni (t,\omega,u) \mapsto U_{\omega}(t)\,u \in X 
\end{equation*}
satisfying
\begin{align}
&U_{\omega}(0) = \mathrm{Id}_{X} \quad & \textrm{for each }\,\omega  \in \Omega, \nonumber 
\\
&U_{\theta_{s}\omega}(t) \circ U_{\omega}(s)= U_{\omega}(t+s) \qquad &\textrm{for each } \,\omega \in \Omega \textrm{ and }  t,\,s \in \RR^{+},
\label{eq-cocycle}
\\
&[\, X \ni u \mapsto U_{\omega}(t)u \in X \,] \in \mathcal{L}(X) & \textrm{for each }\,\omega \in \Omega \textrm{ and } t \in \RR^{+}.\nonumber
\end{align}
Equation~\eqref{eq-cocycle} is called the cocycle property.

\subsection{Lyapunov exponents}
Let $\Phi = \allowbreak ((U_\omega(t))_{\omega \in \Omega, t \in \RR^{+}}, \allowbreak (\theta_t)_{t\in\RR})$ be a measurable
linear semiflow on a separable Banach space $X$, covering an
ergodic metric dynamical system on a complete probability
space $(\Omega,\mathfrak F,\mathbb P)$.
For $\omega\in\Omega$ and $u\in X\setminus\{0\}$, the
Lyapunov exponent associated with $u$ is defined by
\begin{equation*}
    \lambda(\omega,u)
    :=
    \lim_{t\to\infty}
    \frac{1}{t}\log\|U_\omega(t)u\|_X,
\end{equation*}
whenever this limit exists in the extended real line.
We adopt the convention $\log 0=-\infty$.
Without assuming existence of the limit, the corresponding
upper Lyapunov exponent is defined by
\begin{equation*}
    \overline\lambda(\omega,u)
    :=
    \limsup_{t\to\infty}
    \frac{1}{t}\log\|U_\omega(t)u\|_X.
\end{equation*}

The top Lyapunov exponent describes the asymptotic growth
of the operator norm. Suppose that the functions
\begin{equation*}
    \omega\mapsto
    \sup_{0\leq s\leq1}
    \log^+\|U_\omega(s)\|_{\mathcal L(X)}
    \quad\text{and}\quad
    \omega\mapsto
    \sup_{0\leq s\leq1}
    \log^+\|U_{\theta_s\omega}(1-s)\|_{\mathcal L(X)}
\end{equation*}
belong to $L^1(\Omega,\mathfrak F,\mathbb P)$, where
$\log^+r:=\max\{0,\log r\}$ for $r>0$ and $\log^+0:=0$.
The subadditive ergodic theorem, together with these
short-time bounds, yields a constant
$\lambda_{\mathrm{top}}\in[-\infty,\infty)$ such that
\begin{equation*}
    \lim_{t\to\infty}
    \frac{1}{t}\log\|U_\omega(t)\|_{\mathcal L(X)}
    =
    \lambda_{\mathrm{top}}
    \qquad\text{for }\mathbb P\text{-almost every }\omega;
\end{equation*}
see \cite[Section~3]{MierczynskiNovoObaya2020}.
This constant is called the top Lyapunov exponent of the
semiflow. At every base point where this limit holds,
\begin{equation*}
    \overline\lambda(\omega,u)\leq\lambda_{\mathrm{top}},
\end{equation*}
where $u\in X\setminus\{0\}$.  For the semiflow on $C([-1,0])$, we denote the top Lyapunov
exponent by $\lambda_{\mathrm{top}}^C$.
The required integrability conditions will be verified below.
We impose the additional assumption
\begin{equation*}
    \lambda_{\mathrm{top}}^C>-\infty.
\end{equation*}
This excludes the case of superexponential decay of the
operator norm. Under this assumption, the compactness result
proved below will allow us to apply the multiplicative
ergodic theorem and obtain an Oseledets decomposition.

\subsection{Construction of the solution semiflow}
To be more specific, we will define the mapping 
\begin{equation*}
 \RR^{+} \times \Omega \times AC([-1,0]) \ni (t,\omega,u) \mapsto U_{\omega}(t)\,u \in AC([-1,0]) 
\end{equation*} 
in the following way 
\begin{equation*}
\begin{split}
U_\omega(t) u_0 ={}& \text{solution to the equation } 
\begin{cases} 
z'(t) = \displaystyle \int_{[-1,0]} z(t+r) \, \mathrm{d}\mu_{\theta_{t}\omega}(r) \\ 
z = u_0 \quad \text{on } [-1,0] 
\end{cases} \\
&\text{shifted backward by } t \text{ (i.e., } s \mapsto z(t+s) \text{) and restricted to } [-1,0].
\end{split}
\end{equation*}
We show below that this family defines a linear skew-product semidynamical system. Firstly, we can see that for any fixed $\omega\in \Omega$ and $u_0\in AC([-1,0])$, straight from the definition we have $U_{\omega}(0)u_0 = u_0$.  
\begin{lemma}\label{lemma:cocycle-U-AC}
    For each $\omega\in \Omega$ and $t,s\in \RR^+$ we have $U_{\theta_{s}\omega}(t) \circ U_{\omega}(s)= U_{\omega}(t+s)$.
\end{lemma}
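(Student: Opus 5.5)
The plan is to derive the cocycle identity from uniqueness of Carath\'eodory solutions (Proposition~\ref{prop:unigue-sol-in-C}), applied on a finite interval long enough to contain $[0,t+s]$. Fix $\omega\in\Omega$, $t,s\in\RR^+$ and $u_0\in AC([-1,0])$. Let $z$ denote the solution on $[-1,t+s]$ of the problem with coefficient path $\tau\mapsto\mu_{\theta_\tau\omega}$ and initial history $u_0$. Then $U_\omega(s)u_0=z_s$ and $U_\omega(t+s)u_0=z_{t+s}$. Note that restricting a solution on a longer interval gives the solution on a shorter one, again by uniqueness. This makes the definition independent of the auxiliary horizon $T$.

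Next, define $w(r):=z(s+r)$ for $r\in[-1,t]$. By construction $w=z_s$ on $[-1,0]$. Subtracting the integral identity~\eqref{eq:int-variant} for $z$ at times $s$ and $s+r$ gives, for $r\in[0,t]$,
\begin{equation*}
w(r)=z(s)+\int_s^{s+r}\int_{[-1,0]} z(\tau+\sigma)\,\dd\mu_{\theta_\tau\omega}(\sigma)\,\dd\tau .
\end{equation*}
The substitution $\tau=s+\rho$, together with the flow property $\theta_{s+\rho}\omega=\theta_\rho(\theta_s\omega)$, turns this into
\begin{equation*}
w(r)=w(0)+\int_0^{r}\int_{[-1,0]} w(\rho+\sigma)\,\dd\mu_{\theta_\rho(\theta_s\omega)}(\sigma)\,\dd\rho .
\end{equation*}
For this, I need that $\rho\mapsto\mu_{\theta_{s+\rho}\omega}$ is again an admissible path in $\mathfrak m_t$. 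Weak-* measurability in $\rho$ follows from measurability in $\tau$, since a translation preserves Lebesgue measurability, and the bound $K$ is unchanged.

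Hence $w$ is a Carath\'eodory solution with coefficient path generated by $\theta_s\omega$ and initial history $z_s=U_\omega(s)u_0$. This history lies in $AC$ by Proposition~\ref{prop:Gronwal-determine-bound}, which is needed so that $U_{\theta_s\omega}(t)$ is applied within $AC([-1,0])$. By uniqueness (Proposition~\ref{prop:unigue-sol-in-C}, equivalently the contraction of Proposition~\ref{prop:contraction-ODEs}), $w=z(\cdot;\mu_{\theta_\cdot\theta_s\omega},z_s)$. Therefore $U_{\theta_s\omega}(t)U_\omega(s)u_0=w_t=z_{t+s}=U_\omega(t+s)u_0$.

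The only delicate point is the bookkeeping in the change of variables. One must check that the history segments of $w$ for $\rho\in[0,1]$ draw on values of $z$ in $[s-1,s]$, which are exactly $z_s$. One must also check that the shifted path satisfies \ref{A2-ODEs}. Everything else is routine.
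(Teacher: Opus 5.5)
Your proposal is correct and follows essentially the same route as the paper. Both arguments split the integral identity at time $s$, shift the time variable using $\theta_{s+\rho}\omega=\theta_\rho(\theta_s\omega)$, and conclude by uniqueness that $w=z(s+\cdot)$ is the solution issued from $z_s$ for the base point $\theta_s\omega$. You are also somewhat more careful than the paper: you state explicitly the appeal to uniqueness, the admissibility of the shifted coefficient path, and the membership $z_s\in AC([-1,0])$, all of which the paper leaves implicit.
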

\begin{proof}
Fix $\omega\in \Omega$, $t,s\in \RR^+$ and $u_0\in AC([-1,0])$. Let $x$ be the solution of 
\begin{equation*}
\begin{cases} 
z'(t) = \displaystyle \int_{[-1,0]} z(t+r) \, \mathrm{d}\mu_{\theta_{t}\omega}(r) \\ 
z = u_0 \quad \text{on } [-1,0] 
\end{cases}
\end{equation*}
and $x_s$ be $z(\cdot+s)$. Therefore, $z_s{\restriction}_{[-1,0]} = U_{\omega}(s)u_0$. Similarly, we can define $z_{t+s}$ and simplify the problem to showing $z_{t+s}{\restriction}_{[-1,0]} = U_{\theta_s \omega }(t)(z_s{\restriction}_{[-1,0]})$. However, from the definition of $U_{\theta_s \omega }(t)$ it suffices to show 
\begin{equation*}
\begin{cases}
 z_s'(t) = \displaystyle \int_{[-1,0]} z_s(t + r) \, \dd\mu_{\theta_{t+s}\omega}(r) \\
z_s = z_s{\restriction}_{[-1,0]} \quad \text{on } [-1,0]. 
\end{cases}
\end{equation*}
omitting translation by $t$ and restriction to $[-1,0]$. Since the second condition is trivially satisfied, we focus on the first one. Recall that 
\begin{equation*}
z(t) = z(0) + \int_{[0,t)} \int_{[-1,0]} z(\tau+r) \, \dd\mu_{\theta_\tau\omega}(r) \, \dd \tau.
\end{equation*}
Therefore, after changing the argument, we have 
\begin{equation*}
\begin{split}
    z(t+s) & = z(0) + \int_{[0,t+s)} \int_{[-1,0]} z(\tau+r) \, \dd\mu_{\theta_\tau\omega}(r) \, \dd \tau \\
& = z(s) + \int_{[s,t+s)} \int_{[-1,0]} z(\tau+r) \, \dd\mu_{\theta_\tau\omega}(r) \, \dd \tau.
\end{split}
\end{equation*}
After going back to $x_s$ notation we can write  
\begin{equation*}
z_s(t) = z_s(0) + \int_{[0,t)} \int_{[-1,0]} z_s(\tau+r) \, \dd\mu_{\theta_{\tau+s}\omega}(r) \, \dd \tau
\end{equation*}
or in the differential form 
\begin{equation*}
z_s'(t) = \int_{[-1,0]} z_s(t+r) \, \dd\mu_{\theta_{t+s}\omega}(r). 
\end{equation*}
\end{proof}

\begin{lemma}\label{lemma:U-cont-Ac}
For each $\omega\in \Omega$ and $t\in \RR^+$ we have $U_{\omega}(t)\in \mathcal{L}(AC([-1,0]))$. 
\end{lemma}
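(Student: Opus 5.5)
The statement has two parts: $U_\omega(t)$ maps $AC([-1,0])$ into itself linearly, and it is bounded with an explicit constant. Fix $\omega\in\Omega$ and $t\in\RR^+$, and put $\mu:=(\mu_{\theta_\tau\omega})_{\tau\in[0,t]}$, assuming (as throughout this section) that this path lies in $\mathfrak m_t$. For $u_0\in AC([-1,0])$ write $z=z(\cdot;\mu,u_0)$.

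\textbf{Step 1: the segment lies in $AC([-1,0])$.} By Proposition~\ref{prop:Gronwal-determine-bound}, $z\in AC([-1,t])$. Its segment $z_t$ is the restriction of $z$ to the subinterval $[t-1,t]$, translated. Since restriction and translation preserve absolute continuity, $U_\omega(t)u_0=z_t\in AC([-1,0])$.

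\textbf{Step 2: linearity.} I will use uniqueness of fixed points (Proposition~\ref{prop:unigue-sol-in-C}). For $u_0,v_0\in AC([-1,0])$ and scalars $a,b$, the function $az(\cdot;\mu,u_0)+bz(\cdot;\mu,v_0)$ has initial history $au_0+bv_0$. Because the integral equation~\eqref{eq:int-variant} is linear in $z$ and $u_0$, this function satisfies~\eqref{eq:int-variant} with that history. By uniqueness it equals $z(\cdot;\mu,au_0+bv_0)$. Taking segments gives $U_\omega(t)(au_0+bv_0)=aU_\omega(t)u_0+bU_\omega(t)v_0$.

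\textbf{Step 3: boundedness.} This is the only step needing a computation. By the definition of the norm,
\begin{equation*}
\|z_t\|_{AC([-1,0])}=|z(t-1)|+\int_{t-1}^{t}|z'(r)|\,\dd r.
\end{equation*}
The first term is at most $\sup_{-1\le r\le t}|z(r)|\le e^{Kt}\|u_0\|_C\le e^{Kt}\|u_0\|_{AC}$, by Proposition~\ref{prop:Gronwal-determine-bound} and the inequality $\|u\|_C\le\|u\|_{AC}$. The second term is at most $\int_{-1}^{t}|z'(r)|\,\dd r$. Split this integral at $0$:
\begin{itemize}
\item on $[-1,0]$ it is bounded by $\|u_0\|_{AC}$;
\item on $[0,t]$ it is bounded by $(e^{Kt}-1)\|u_0\|_C$, using $|z'(r)|\le Ke^{Kr}\|u_0\|_C$ as in the proof of Proposition~\ref{prop:Gronwal-determine-bound}.
\end{itemize}
Adding the three contributions gives
\begin{equation*}
\|U_\omega(t)u_0\|_{AC}\le 2e^{Kt}\|u_0\|_{AC}.
\end{equation*}
So the operator is bounded with norm at most $2e^{Kt}$, uniformly in $\omega$.

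\textbf{Possible obstacle.} The one delicate point is that the Gr\"onwall estimate is stated on a fixed interval $[0,T]$ with $\mu\in\mathfrak m_T$. I will apply it with $T:=t$. This needs the path $\tau\mapsto\mu_{\theta_\tau\omega}$ to satisfy \ref{A2-ODEs} with the uniform bound $K$, which is a standing hypothesis of this section. The case $t=0$ is trivial, since $U_\omega(0)=\mathrm{Id}$.
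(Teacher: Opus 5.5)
Your proof is correct and follows essentially the paper's route: both bound the $AC$-norm of the segment $z_t$ using the Gr\"onwall estimates of Proposition~\ref{prop:Gronwal-determine-bound}. The only difference is that the paper absorbs $|z(t-1)|$ into $|z(-1)|+\int_{-1}^{t}|z'(r)|\,\dd r$, which gives the constant $1+tKe^{tK}$ (and in fact $e^{Kt}$ via the third estimate of that proposition), whereas you bound it separately by the sup-norm estimate and obtain $2e^{Kt}$, while your explicit checks of linearity and of $z_t\in AC([-1,0])$ cover points the paper defers to Theorem~\ref{thm:AC-measurable-semiflow}.
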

\begin{proof} Fix $\omega$, $t$ and $u_0\in AC([-1,0])$. From the definition of $U_\omega(t)$ and Proposition~\ref{prop:Gronwal-determine-bound}
\begin{equation*}
    \begin{split}
\|U_\omega(t)u_0\|_{AC([-1,0])} & = \| z(t + \cdot){\restriction}_{[-1, 0]} \|_{AC([-1,0])} \\
&\le |z(-1)| + \int_{-1}^T |z'(\tau)| \dd\tau \\
&\le |z(-1)| + \|z'_0\|_{L_1([-1,0])} + TK e^{TK}\|u_0\|_{AC([-1,0])} \\
&= (1 + TKe^{TK})\|u_0\|_{AC([-1,0])}.
    \end{split}
\end{equation*}
\end{proof}
\begin{lemma}\label{lemma:joint-continu}
Let $\omega \in \Omega$. The function
\begin{equation*}
    \mathbb{R}^+ \times AC([-1,0]) \ni (t, z) \mapsto U_\omega(t)z \in AC([-1,0]) 
\end{equation*}
is continuous.
\end{lemma}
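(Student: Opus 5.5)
Fix $\omega$ and work on a compact time window $[0,T]$; it suffices to prove continuity at every $(t_0,u_0)$ with $t_0<T$ (and at $t_0=T$ from the left). Let $z^u:=z(\cdot;\mu,u)$ denote the solution on $[-1,T]$ with coefficient path $\mu_\tau=\mu_{\theta_\tau\omega}$ and initial history $u\in AC([-1,0])$. I will split the difference with the triangle inequality:
\begin{equation*}
\|U_\omega(t)u-U_\omega(t_0)u_0\|_{AC}
\le
\|U_\omega(t)(u-u_0)\|_{AC}
+
\|U_\omega(t)u_0-U_\omega(t_0)u_0\|_{AC}.
\end{equation*}
The first term is handled by linearity together with the uniform bound from the proof of Lemma~\ref{lemma:U-cont-Ac}: for $t\in[0,T]$ we have $\|U_\omega(t)\|_{\mathcal L(AC)}\le 1+TKe^{TK}$, the constant coming from Proposition~\ref{prop:Gronwal-determine-bound} and being independent of $t\le T$. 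Hence the first term is at most $(1+TKe^{TK})\|u-u_0\|_{AC}\to0$. Linearity of $u\mapsto z^u$ follows from uniqueness (Proposition~\ref{prop:unigue-sol-in-C}), since the operator $\mathfrak G_\mu$ is affine.

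It remains to show continuity of $t\mapsto U_\omega(t)u_0=z_t{\restriction}_{[-1,0]}$ in $AC([-1,0])$ for a fixed $z:=z^{u_0}\in AC([-1,T])$; this holds by Proposition~\ref{prop:Gronwal-determine-bound}, because $u_0\in AC$. Writing $\|z_t-z_{t_0}\|_{AC}=|z(t-1)-z(t_0-1)|+\int_{-1}^0|z'(t+s)-z'(t_0+s)|\,\dd s$, the first summand tends to zero by continuity of $z$. For the second, I will extend $z'\in L_1([-1,T])$ by zero outside $[-1,T]$ and invoke continuity of translations in $L_1(\RR)$: $\|z'(\cdot+h)-z'\|_{L_1(\RR)}\to0$ as $h\to0$. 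Substituting $r=t_0+s$ then bounds the integral by $\|z'(\cdot+(t-t_0))-z'\|_{L_1(\RR)}$, provided $t$ stays in $[0,T]$ so that the zero extension does not enter the window $[t-1,t]$.

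The main obstacle is this last step. It requires that $z'$ be genuinely integrable across the junction at $0$, which in turn requires that $z$ be globally absolutely continuous on $[-1,T]$ rather than merely piecewise absolutely continuous; that is exactly the final assertion of Proposition~\ref{prop:Gronwal-determine-bound}. It also requires some care at the right endpoint, which I avoid by choosing $T>t_0+1$. Combining the two estimates gives joint continuity at $(t_0,u_0)$.
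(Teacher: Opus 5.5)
Your proposal is correct and follows essentially the same route as the paper. It uses the same triangle-inequality split, the same uniform bound $1+TKe^{TK}$ from Lemma~\ref{lemma:U-cont-Ac} for the initial-history term, and continuity of $z$ together with continuity of translations in $L_1$ for the time term. Your remarks that $z$ must be globally absolutely continuous across $0$, and that $T$ should be chosen so the window $[t-1,t]$ stays inside $[-1,T]$, make explicit points the paper leaves implicit.
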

\begin{proof}  We first establish continuity in each variable and then prove joint continuity. Continuity of the mapping 
    \begin{equation*}
     AC([-1,0]) \ni z \mapsto U_\omega(t)z \in AC([-1,0]) 
\end{equation*}
for each fixed $t\geq0$ follows directly from Lemma~\ref{lemma:U-cont-Ac}. Fix $u_0 \in AC$, $\omega \in \Omega$, let $z: [-1, T] \to \mathbb{R}$ be the solution
\begin{equation}
z(t) = z(-1)+ \int_{-1}^t f(\tau) \,\dd\tau
\end{equation}
for some $f \in L_1([-1, T])$. Since, $  U_\omega(t)u_0 = z_t{\restriction}_{[-1, 0]} \in AC([-1,0])$ we have 
\begin{equation*}
    \begin{split}
        \|U_\omega(t+h)u_0 - U_\omega(t)u_0\|_{AC} &= \|z_{t+h}{\restriction}_{[-1, 0]} - z_t{\restriction}_{[-1, 0]}\|_{AC} \\
&= |z(t+h-1) - z(t-1)| + \| S_{t+h} f - S_t f \|_{L_1([-1, 0])}
    \end{split}
\end{equation*}
where $S$ is the translation operator given by 
$S_t f(r) = f(r+t)$. The first term converges to zero as $h \to 0$ because the solution $z(\cdot)$ is absolutely continuous. The second term converges to zero since translation is continuous in the $\mathcal{L}_{s}(L_p)$($1\le p < \infty$); see~\cite[Prop. 8.5]{Folland}

In order to prove joint continuity take the sequence $(t_n, z_n)$ converging to the pair $(t, z)$ in the space $[0, T] \times AC$. We have 
\begin{equation*}
\|U_\omega(t_n)z_n - U_\omega(t)z\|_{AC}
\le \|U_\omega(t_n)z_n - U_\omega(t_n)z\|_{AC} + \|U_\omega(t_n)z - U_\omega(t)z\|_{AC}
\end{equation*}
Let us consider both components of the sum separately.
The first component can be estimated by Lemma~\ref{lemma:U-cont-Ac} as follows 
\begin{equation*}
    \|U_\omega(t_n)z_n - U_\omega(t_n)z\|_{AC} \le (1+TKe^{KT}) \|z_n - z\|_{AC}. 
\end{equation*}
The second component converges to zero directly from the previously proven continuity of the operator with respect to time $t$. Since both components of the estimate converge to zero, the entire sum also converges to zero. 
\end{proof}

To complete the construction of the measurable linear
skew-product semiflow on $AC([-1,0])$, it remains to establish
measurability with respect to the base point. More precisely,
for every fixed $t\ge0$ and $u_0\in AC([-1,0])$, we need to
prove that the mapping
\begin{equation*}
    \Omega\ni\omega
    \mapsto U_\omega(t)u_0\in AC([-1,0])
\end{equation*}
is $(\mathfrak{F} ,\mathfrak{B} (AC([-1,0])))$-measurable.
Together with continuity with respect to time and the initial
history, this will yield the joint measurability required
in the definition of the semiflow.

The proof requires a suitable measurable structure on the
space of measure-valued parameters. Since its construction
and the associated measurability arguments involve several
auxiliary results, we develop them separately in the next
section. We introduce a $\sigma$-algebra $\Sigma_T$
on $\mathfrak m_T$ and assume that
\begin{equation*}
    \Omega\ni\omega
    \mapsto
    (\mu_{\theta_\tau\omega})_{\tau\in[0,T]}
    \in\mathfrak m_T
\end{equation*}
is $(\mathfrak{F},\Sigma_T)$-measurable.
Here $\Sigma_T$ is the smallest $\sigma$-algebra making
all scalar mappings
\begin{equation*}
    \mathfrak m_T\ni\mu
    \mapsto
    \int_0^t\int_{[-1,0]}
    z(\tau+s)\,\dd\mu_\tau(s)\,\dd\tau
    \in\mathbb R,
    \qquad
    z\in AC([-1,T]),\quad t\in[0,T],
\end{equation*}
measurable. In the next section, we give equivalent characterizations
of $\Sigma_T$ in terms of the associated measures on
$[0,T]\times[-1,0]$. We then show that this measurable
structure ensures measurability of the integral operator
for each fixed argument. Finally, using successive
approximations, we establish measurability of its fixed
point, and hence of the solution with respect to the
measure-valued parameter. Composing the resulting solution
map with the parametrization by $\omega$ will provide
the remaining measurability property.

\section{Measurable structures on the parameter space}\label{sec:Measurable-structures}
Our purpose is to equip the parameter space $\mathfrak m_T$
with a $\sigma$-algebra that ensures measurable dependence
of the integral operator on the parameter. We then use this
property to establish measurability of the solution and,
subsequently, of the associated skew-product semiflow.

Fix an initial history $u_0\in AC([-1,0])$.
We retain the spaces $C_{u_0}([0,T])$ and
$AC_{u_0}([0,T])$ and the continuation operator
$\mathfrak G_\mu$ introduced earlier.
With a slight abuse of notation, every
$x\in C_{u_0}([0,T])$ is understood to be extended to
$[-1,T]$ by setting
\begin{equation*}
    x(r):=u_0(r),
    \qquad r\in[-1,0].
\end{equation*}
In particular, whenever $\tau+s<0$, the expression
$x(\tau+s)$ denotes $u_0(\tau+s)$.
The extension agrees with the continuation at zero
because $x(0)=u_0(0)$.

With this convention, the operator is written as
\begin{equation*}
    \mathfrak G_\mu (x)[t]
    =
    u_0(0)
    +
    \int_0^t\int_{[-1,0]}
        x(\tau+s)\,\dd\mu_\tau(s)\,\dd\tau,
    \qquad t\in[0,T].
\end{equation*}
Whenever values at negative times are needed, we similarly
set
\begin{equation*}
     \mathfrak G_\mu (x)[t]:=u_0(t),
    \qquad t\in[-1,0].
\end{equation*}
These extensions are only a notational convention;
the domain and range of the continuation operator remain
spaces of functions on $[0,T]$.

We seek a $\sigma$-algebra $\Sigma_T$ on $\mathfrak m_T$
such that, for every fixed $x\in AC_{u_0}([0,T])$,
the mapping
\begin{equation*}
    \mathfrak m_T\ni\mu
    \mapsto\mathfrak G_\mu (x)
    \in AC_{u_0}([0,T])
\end{equation*}
is measurable with respect to
$\Sigma_T$ and $\mathfrak B(AC([0,T]))$.
The first lemma shows that this measurability can be
verified through scalar evaluations at fixed times,
even though the range carries the topology induced
by the $AC$ norm.

\begin{lemma}\label{lem:AC-evaluations}
Let $D\subset[a,b]$ be a countable dense set containing
$a$ and $b$. For $t\in[a,b]$, let
\begin{equation*}
    \pi_t:AC([a,b])\to\mathbb R,
    \qquad
    \pi_t(x)=x(t).
\end{equation*}
Then
\begin{equation*}
    \mathfrak{B}(AC([a,b]))
    =
    \sigma(\pi_t:t\in D).
\end{equation*}
Consequently, a mapping from an arbitrary measurable space
into $AC([a,b])$ is Borel measurable if and only if
all its evaluations at times in $D$ are measurable.
\end{lemma}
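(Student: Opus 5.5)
We will prove the two inclusions separately; the difficulty lies entirely in the inclusion $\mathfrak{B}(AC([a,b]))\subseteq\sigma(\pi_t:t\in D)$.

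The inclusion $\sigma(\pi_t:t\in D)\subseteq\mathfrak{B}(AC([a,b]))$ is immediate. Each $\pi_t$ is linear and satisfies $|\pi_t(x)|\le\|x\|_{C([a,b])}\le\|x\|_{AC([a,b])}$. Hence it is continuous on $AC([a,b])$, and therefore Borel measurable.

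For the reverse inclusion we use separability. The space $AC([a,b])$ is isometric to $\mathbb R\times L_1([a,b])$ via $x\mapsto(x(a),x')$, so it is a separable Banach space. Every open set is then a countable union of closed balls $\overline B(x_0,r)$, with $x_0$ in a countable dense set and rational $r>0$. It therefore suffices to show that the norm $x\mapsto\|x-x_0\|_{AC}$ is $\sigma(\pi_t:t\in D)$-measurable. Writing $y=x-x_0$, the functions $\pi_t(y)=\pi_t(x)-\pi_t(x_0)$ are again measurable, so it is enough to show that $N(y):=\|y\|_{AC}$ is measurable for the $\sigma$-algebra generated by the evaluations.

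This is the main step. The plan is to express $\int_a^b|y'|$ as the total variation of $y$, computed along $D$. Enumerate finite partitions $a=s_0<s_1<\dots<s_k=b$ with all $s_j\in D$; there are countably many such partitions because $D$ is countable and contains $a$ and $b$. For an absolutely continuous $y$ we have
\begin{equation*}
    \int_a^b|y'(r)|\,\dd r=\operatorname{Var}_{[a,b]}(y)=\sup\Big\{\sum_{j=1}^k|y(s_j)-y(s_{j-1})|\Big\},
\end{equation*}
where the supremum is taken over these $D$-partitions. To justify restricting to $D$, start from an arbitrary partition of $[a,b]$. By density of $D$ and continuity of $y$, each interior point can be replaced by a nearby point of $D$, keeping the points ordered, so that the sum changes by less than any prescribed $\varepsilon$. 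Thus the supremum over $D$-partitions equals the full variation.

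Consequently $N(y)=|\pi_a(y)|+\sup_{\text{$D$-partitions}}\sum_j|\pi_{s_j}(y)-\pi_{s_{j-1}}(y)|$. This is a countable supremum of measurable functions, hence $\sigma(\pi_t:t\in D)$-measurable, and the equality of $\sigma$-algebras follows.

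The final assertion is then standard. A map $g$ from a measurable space $(E,\mathcal E)$ into $AC([a,b])$ is Borel measurable if and only if $g^{-1}$ maps the generating sets $\pi_t^{-1}(A)$ into $\mathcal E$, i.e.\ if and only if each $\pi_t\circ g$ is measurable for $t\in D$. The only delicate point is the approximation of arbitrary partitions by $D$-partitions. It uses continuity of $y$ only, and the endpoints are unaffected because $a,b\in D$.
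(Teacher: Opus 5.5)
Your proposal is correct and follows essentially the same route as the paper. Continuity of the evaluations gives one inclusion. For the other, you write $\|x-x_0\|_{AC([a,b])}$ as $|x(a)-x_0(a)|$ plus a countable supremum of finite sums of evaluation differences over $D$-partitions, and combine this with separability of $AC([a,b])\cong\RR\times L_1([a,b])$. The only cosmetic difference is that you cover open sets directly by closed balls with centres in a countable dense set, whereas the paper passes from closed balls to open balls and then uses a countable base.
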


\begin{proof}
Every evaluation $\pi_t$ is continuous, since
\begin{equation*}
    |x(t)|
    \le
    |x(a)|+\int_a^b|x'(r)|\,\dd r
    =
    \|x\|_{AC([a,b])}.
\end{equation*}
It follows that
\begin{equation*}
    \sigma(\pi_t:t\in D)
    \subset
    \mathfrak{B}(AC([a,b])).
\end{equation*}
For the reverse inclusion, recall that for an absolutely
continuous function its total variation equals the integral
of the absolute value of its derivative. Moreover, by
continuity, the total variation can be computed using
partitions with points in $D$. Hence
\begin{equation*}
    \|x\|_{AC([a,b])}
    =
    |x(a)|
    +
    \sup_{\substack{
        n\ge1 \\ a=t_0<\cdots<t_n=b\\
        t_0,\ldots,t_n\in D
    }} \, \, 
    \sum_{j=1}^n|x(t_j)-x(t_{j-1})|.
\end{equation*}
The family of partitions appearing in this supremum is
countable. Fix $y\in AC([a,b])$ and $r\ge0$. The closed ball with centre $y$ and radius $r$ can therefore
be written as 
\begin{equation*}
    \begin{aligned}
        &\{x:\|x-y\|_{AC([a,b])}\le r\}\\
        &\quad=
        \bigcap_{\substack{
            n\ge1\\  a=t_0<\cdots<t_n=b\\
            t_0,\ldots,t_n\in D
        }}
        \Big\{
            x:
            |x(a)-y(a)|
            +
            \sum_{j=1}^n
            |
                (x-y)(t_j)-(x-y)(t_{j-1})
            |
            \le r
        \Big\}.
    \end{aligned}
\end{equation*}
Each set in the intersection is measurable with respect
to $\sigma(\pi_t:t\in D)$, since its definition involves
only finitely many evaluations and a continuous function
of their values. Thus every closed ball belongs to this
$\sigma$-algebra. The same is true of every open ball,
because an open ball is a countable union of concentric
closed balls with smaller radii. 
In particular, $AC([a,b])$ is separable since it is isometrically isomorphic with $\RR\times L_1([a,b])$.
Its topology has a countable base of open balls, so every
open set is a countable union of measurable balls.
This proves the reverse inclusion.
\end{proof}

\subsection{The weak-* Borel structure on the space of measures}
Set
\begin{equation*}
    Q=[0,T]\times[-1,0],
    \qquad
    \mathcal M(Q)=C(Q)^*.
\end{equation*}
We identify $\mathcal M(Q)$ with the space of finite signed
Borel measures on $Q$. It is equipped with the weak-$*$
topology $w^*$ induced by the duality with $C(Q)$. The next lemma relates Borel measurability for this topology
to scalar measurability under continuous test functions.

\begin{lemma}\label{lem:measure-Borel}
For $z\in C(Q)$ and an open set $U\subset\mathbb R$,
define
\begin{equation*}
    B_{z,U}
    =
    \{\nu\in\mathcal M(Q):\langle z,\nu\rangle\in U\}.
\end{equation*}
Then
\begin{equation*}
    \mathfrak B(\mathcal M(Q),w^*)
    =
    \sigma\bigl(
        B_{z,U}:
        z\in C(Q),\
        U\subset\mathbb R\text{ open}
    \bigr).
\end{equation*}
Consequently, a mapping into $\mathcal M(Q)$ is
weak-$*$ Borel measurable if and only if all its pairings
with functions in $C(Q)$ are measurable.
\end{lemma}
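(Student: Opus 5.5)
The inclusion $\sigma(B_{z,U})\subset\mathfrak B(\mathcal M(Q),w^*)$ is immediate. For every $z\in C(Q)$ the map $\nu\mapsto\langle z,\nu\rangle$ is weak-$*$ continuous by definition of $w^*$, so each $B_{z,U}$ is weak-$*$ open. The real content is the reverse inclusion: every weak-$*$ open set must lie in the cylinder $\sigma$-algebra $\mathcal C:=\sigma(B_{z,U})$. On the whole dual $\mathcal M(Q)$ the weak-$*$ topology is neither metrizable nor second countable. So I will not try to write an arbitrary open set as a countable union of basic neighbourhoods directly. Instead I will localize to norm-bounded balls, where the remark on $\mathfrak m$ (cf.~\cite[Thm. 3.6.17]{DM}) applies with $Q$ in place of $[-1,0]$.

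Fix a countable dense set $\mathcal D\subset C(Q)$, which exists since $Q$ is a compact metric space, and let $\mathcal D_1$ be a countable dense subset of the closed unit ball of $C(Q)$. For $n\in\NN$ put $\mathcal B_n:=\{\nu:\|\nu\|_{\mathrm{TV}}\le n\}$.

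\emph{Step 1: each $\mathcal B_n$ lies in $\mathcal C$.} By density,
\begin{equation*}
    \mathcal B_n=\bigcap_{z\in\mathcal D_1}\{\nu:|\langle z,\nu\rangle|\le n\}.
\end{equation*}
This is a countable intersection of complements of sets $B_{z,U}$, so $\mathcal B_n\in\mathcal C$.

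\emph{Step 2: a countable base on $\mathcal B_n$.} Each $\mathcal B_n$ is weak-$*$ compact and metrizable. The relative weak-$*$ topology on $\mathcal B_n$ coincides with the topology generated by the pairings with elements of $\mathcal D$, because on bounded sets convergence against a dense set of test functions implies convergence against all of them (an $\varepsilon/3$ argument using $\|\nu\|_{\mathrm{TV}}\le n$). Hence the sets
\begin{equation*}
    \mathcal B_n\cap\bigcap_{i=1}^k\{\nu:|\langle z_i,\nu-\nu_0\rangle|<q\},
\end{equation*}
with $z_i\in\mathcal D$, $q\in\QQ_{>0}$ and $\nu_0$ ranging over a countable dense subset of $\mathcal B_n$, form a countable base for this relative topology. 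Each of these sets belongs to $\mathcal C$: the set $\{\nu:|\langle z_i,\nu-\nu_0\rangle|<q\}$ equals $B_{z_i,U}$ with $U$ the open interval of radius $q$ centred at $\langle z_i,\nu_0\rangle$, and $\mathcal B_n\in\mathcal C$ by Step 1.

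\emph{Step 3: conclusion for open sets.} Let $G$ be weak-$*$ open. Then $G\cap\mathcal B_n$ is relatively open in $\mathcal B_n$, so it is a countable union of the basic sets from Step 2 and lies in $\mathcal C$. Since every finite signed measure has finite total variation, $\mathcal M(Q)=\bigcup_n\mathcal B_n$, and therefore $G=\bigcup_n(G\cap\mathcal B_n)\in\mathcal C$. Thus $\mathfrak B(\mathcal M(Q),w^*)\subset\mathcal C$.

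The consequence then follows from the standard generator criterion. A map $\Psi$ into $\mathcal M(Q)$ is measurable for $\mathcal C$ if and only if $\Psi^{-1}(B_{z,U})$ is measurable for all $z$ and $U$. This holds exactly when every composition $\langle z,\Psi(\cdot)\rangle$ is Borel measurable, since open sets generate $\mathfrak B(\RR)$.

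The main obstacle I expect is Step 2, namely the identification of the relative weak-$*$ topology on $\mathcal B_n$ with the countably generated one. Here the uniform total variation bound is essential, and it is precisely why the argument must pass through the balls $\mathcal B_n$ rather than work on $\mathcal M(Q)$ directly.
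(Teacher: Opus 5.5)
Your proof is correct and follows essentially the same route as the paper. Both arguments show that each total-variation ball $K_n$ lies in the cylinder $\sigma$-algebra, using a countable dense subset of the unit ball of $C(Q)$. Both then use compact metrizability, hence second countability, of $K_n$ to write relatively open sets as countable unions of cylinder sets, and conclude via $\mathcal M(Q)=\bigcup_n K_n$. The differences are cosmetic: you build an explicit countable base from a dense set of test functions, where the paper applies Lindel\"of to the subbase $\{K_n\cap B_{z,U}\}$, and you handle weak-$*$ open sets directly instead of passing through the trace $\sigma$-algebra on $K_n$.
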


\begin{proof}
Write $\mathcal S
    =
    \sigma(
        B_{z,U}:
        z\in C(Q),\
        U\subset\mathbb R\text{ open}
    )$.  Every $B_{z,U}$ is weak-$*$ open, and hence
\begin{equation*}
    \mathcal S
    \subset
    \mathfrak B(\mathcal M(Q),w^*).
\end{equation*}
To prove the reverse inclusion, let
\begin{equation*}
    K_n
    =
    \{\nu\in\mathcal M(Q):\|\nu\|_{\mathrm{TV}}\le n\},
    \qquad n\in\mathbb N.
\end{equation*}
Choose a countable norm-dense subset $\{z_j:j\in\mathbb N\}$
of the closed unit ball of $C(Q)$.
The dual norm formula gives
\begin{equation*}
    \|\nu\|_{\mathrm{TV}}
    =
    \sup_{j\in\mathbb N}|\langle z_j,\nu\rangle|.
\end{equation*}
Therefore,
\begin{equation*}
    K_n
    =
    \bigcap_{j=1}^{\infty}
    \{\nu:|\langle z_j,\nu\rangle|\le n\}
    \in\mathcal S.
\end{equation*}

By the Banach--Alaoglu theorem and the separability of $C(Q)$,
each $K_n$ is compact and metrizable in the relative weak-$*$
topology. In particular, $K_n$ is second countable. The family
\begin{equation*}
\{K_n\cap B:B\in \{B_{z,U}:
        z\in C(Q),\
        U\subset\mathbb R\text{ open}\}\}
\end{equation*}
is a subbase for this relative topology. Thus every relatively
open set $O\subset K_n$ is a union of finite intersections
of members of this family. Since $O$ is second countable,
it is Lindel\"of, and a countable subfamily of these intersections
already covers $O$. Consequently, we may write
\begin{equation*}
O=
\bigcup_{k=1}^{\infty}
\Big(
K_n\cap\bigcap_{l=1}^{m_k}B_{k,l}
\Big),
\qquad B_{k,l}\in \{B_{z,U}:
        z\in C(Q),\
        U\subset\mathbb R\text{ open}\}.
\end{equation*}
Since $K_n\in\mathcal{S}$ and
$\{B_{z,U}:
        z\in C(Q),\
        U\subset\mathbb R\text{ open}\}\subset\mathcal{S}$, it follows that
$O\in\mathcal{S}$. Moreover, the family
\begin{equation*}
\mathcal{S}_n
:=
\{A\subset K_n:A\in\mathcal{S}\}
\end{equation*}
is a $\sigma$-algebra on $K_n$ containing all relatively open
subsets of $K_n$. Hence every Borel subset of $K_n$,
regarded as a subset of $\mathcal{M}(Q)$, belongs to
$\mathcal{S}$.

Now let $E\in\mathfrak{B}(\mathcal{M}(Q),w^*)$.
For each $n$, the set $E\cap K_n$ is Borel in $K_n$,
and therefore belongs to $\mathcal{S}$. Since
\begin{equation*}
    \mathcal{M}(Q)=\bigcup_{n=1}^{\infty}K_n, 
\end{equation*}
we obtain
\begin{equation*}
E=\bigcup_{n=1}^{\infty}(E\cap K_n)\in\mathcal{S}.
\end{equation*}
This proves the reverse inclusion and completes the proof.
\end{proof}

\subsection{Approximation by functions adapted to the delay structure}
The integral operator involves test functions of the form
$(\tau,s)\mapsto x(\tau+s)$, whereas weak-$*$ measurability
of a measure on $Q$ is expressed using arbitrary functions
in $C(Q)$.

The next lemma connects these two classes of tests.
Although functions depending only on $\tau+s$ cannot
uniformly approximate every continuous function on $Q$,
such an approximation becomes possible after allowing
a finite partition in the time variable.

\begin{lemma}\label{lem:delay-test-approximation}
For every $z\in C(Q)$, there exists a sequence
of functions
\begin{equation*}
    z_n(\tau,s)
    =
    \sum_{k=0}^{n-1}
    \mathbbm 1_{I_{k,n}}(\tau)\,x_{k,n}(\tau+s),
    \qquad
    x_{k,n}\in AC([-1,T]),
\end{equation*}
converging uniformly to $z$ on $Q$, where
\begin{equation*}
    I_{k,n}
    =
    \begin{cases}
        [kT/n,(k+1)T/n),
        & 0\le k<n-1,\\
        [(n-1)T/n,T],
        & k=n-1.
    \end{cases}
\end{equation*}
\end{lemma}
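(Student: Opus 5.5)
The plan is to use uniform continuity of $z$ on the compact set $Q$, and on each time strip $I_{k,n}$ to freeze the time variable at the left endpoint $t_k:=kT/n$. On the strip $\tau\in I_{k,n}$ we would like $x_{k,n}(\tau+s)\approx z(t_k,s)$. Since $\tau+s$ and $s$ differ by $\tau\in I_{k,n}$, we shift the argument and set
\begin{equation*}
    x_{k,n}(r)\approx z\bigl(t_k,\,r-t_k\bigr),
\end{equation*}
so that on the strip $x_{k,n}(\tau+s)\approx z(t_k,s+\tau-t_k)$. The quantity $r-t_k$ need not lie in $[-1,0]$, so we first extend $z$ continuously in the second variable. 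Define $\tilde z(t,\sigma):=z(t,\max\{-1,\min\{0,\sigma\}\})$ for $\sigma\in\mathbb R$. This is uniformly continuous on $[0,T]\times\mathbb R$ with the same modulus of continuity $\omega_z$ as $z$, because clamping is $1$-Lipschitz. We then set $y_{k,n}(r):=\tilde z(t_k,r-t_k)$ for $r\in[-1,T]$, which is continuous.

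For the error estimate, take $(\tau,s)\in Q$ with $\tau\in I_{k,n}$. Then
\begin{equation*}
    |y_{k,n}(\tau+s)-z(\tau,s)|
    =|\tilde z(t_k,s+\tau-t_k)-\tilde z(\tau,s)|
    \le\omega_z(2T/n),
\end{equation*}
since both coordinates differ by at most $T/n$ and $\omega_z$ is taken with respect to the sum metric. Here we use that $\tilde z(\tau,s)=z(\tau,s)$ for $s\in[-1,0]$.

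The remaining issue is that $y_{k,n}$ is only continuous, while the statement requires $x_{k,n}\in AC([-1,T])$. We handle this by density: choose $x_{k,n}$ to be a polynomial, or a piecewise linear interpolant of $y_{k,n}$ on a fine grid, with $\|x_{k,n}-y_{k,n}\|_{C([-1,T])}\le 1/n$. The Weierstrass theorem gives such an approximant, and it is Lipschitz, hence absolutely continuous. Since the strips $I_{k,n}$ partition $[0,T]$, for every $(\tau,s)\in Q$ exactly one indicator is nonzero, and
\begin{equation*}
    \sup_{Q}|z_n-z|\le\omega_z(2T/n)+1/n\to0.
\end{equation*}

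The only genuine obstacle is making the shifted argument $r-t_k$ legal outside $[-1,0]$. The clamping extension resolves this without enlarging the modulus of continuity. Everything else is routine uniform continuity and Weierstrass approximation.
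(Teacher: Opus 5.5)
Your proof is correct, but it takes a different route from the paper.

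The paper applies the two-variable Stone--Weierstrass theorem to the algebra generated by $\tau$ and $\tau+s$ (which is just $\mathbb R[\tau,s]$). This gives $v(\tau,s)=\sum_j a_j(\tau)p_j(\tau+s)$ uniformly close to $z$, and the paper then freezes only the coefficients $a_j$ on short time intervals. The resulting $x_k=\sum_j c_{jk}p_j$ are automatically polynomials, so absolute continuity comes for free and $z$ never has to be extended outside $Q$.

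You instead freeze the whole function at the left endpoint $t_k$ and compensate with the shift $r\mapsto r-t_k$ in the delay variable. This forces the clamping extension $\tilde z$, and you then need a separate one-variable Weierstrass (or piecewise linear) step to upgrade $y_{k,n}$ from merely continuous to absolutely continuous.

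What your route buys is an explicit error bound $\omega_z(2T/n)+1/n$ on exactly the uniform partitions $I_{k,n}$ of the statement, so the sequence indexed by $n$ is produced directly. The paper's argument yields, for each $\varepsilon$, some sufficiently fine partition. Matching it to the prescribed $I_{k,n}$ requires an easy but unstated observation: every partition whose mesh is below the uniform-continuity threshold of the $a_j$ works, after which one chooses $\varepsilon_n\to0$.

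The paper's route is more structural in return. It shows that sums of functions of the separated form $a(\tau)p(\tau+s)$ are already dense in $C(Q)$, so the time partition is needed only to make the coefficients constant.
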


\begin{proof}
Consider the algebra
\begin{equation*}
    \mathcal A
    :=
    \Big\{
        \sum_{j=1}^{m}a_j(\tau)p_j(\tau+s):
        m\in\mathbb N,\quad a_j,p_j\in\mathbb R[r]
    \Big\}
    \subset C(Q).
\end{equation*}
It contains the constants and separates points of $Q$,
since the coordinate functions $\tau$ and $\tau+s$
determine $(\tau,s)$ uniquely.
By the Stone--Weierstrass theorem,
$\mathcal A$ is uniformly dense in $C(Q)$ (in fact $\mathcal{A}=\RR[\tau,s]$). Fix $z\in C(Q)$ and $\varepsilon>0$. Choose
\begin{equation*}
    v(\tau,s)=\sum_{j=1}^{m}a_j(\tau)p_j(\tau+s)
    \quad\text{such that}\quad
    \|z-v\|_{C(Q)}<\varepsilon/2.
\end{equation*}
Uniform continuity of the finitely many functions $a_j$
allows us to choose a finite partition $(I_k)_{k=1}^{N}$
of $[0,T]$ into intervals and constants $c_{jk}$ such that
\begin{equation*}
    \Big\|
        v-
        \sum_{k=1}^{N}\mathbbm{1}_{I_k}(\tau)
        \sum_{j=1}^{m}c_{jk}p_j(\tau+s)
    \Big\|_{C(Q)}
    <\varepsilon/2.
\end{equation*}
Setting
\begin{equation*}
    x_k(r):=\sum_{j=1}^{m}c_{jk}p_j(r),
\end{equation*}
we obtain the required approximation, with
$x_k\in AC([-1,T])$.
\end{proof}

\begin{theorem}\label{thm:equivalent-parameter-measurability}
Let $\Sigma$ be a $\sigma$-algebra on $\mathfrak m_T$.
The following conditions are equivalent:
\begin{enumerate}[label=\textup{(}\roman*\textup{)}, ref=\textup{(}\emph{\roman*}\textup{)}]
\item\label{thm:equivalent-parameter-measurability-i}    For every $x\in AC([-1,T])$ and every
    $t\in[0,T]$, the mapping
    \begin{equation*}
        \mathfrak m_T\ni\mu
        \mapsto
        \int_0^t\int_{[-1,0]}
        x(\tau+s)\,\dd\mu_\tau(s)\,\dd\tau
        \in\mathbb R
    \end{equation*}
    is $(\Sigma,\mathfrak B(\mathbb R))$-measurable.
\item\label{thm:equivalent-parameter-measurability-ii}The mapping
    \begin{equation*}
        \mathfrak m_T\ni\mu
        \mapsto\nu(\mu)\in\mathcal M(Q)
    \end{equation*}
    is $(\Sigma,\mathfrak B(\mathcal M(Q),w^*))$-measurable.
\item\label{thm:equivalent-parameter-measurability-iii}The mapping $\mu\mapsto\nu(\mu)$ is weak-$*$ measurable:
    for every $z\in C(Q)$, the scalar mapping
    \begin{equation*}
        \mathfrak m_T\ni\mu
        \mapsto\langle z,\nu(\mu)\rangle\in\mathbb R
    \end{equation*}
    is $(\Sigma,\mathfrak B(\mathbb R))$-measurable.
\end{enumerate}
\end{theorem}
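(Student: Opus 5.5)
The plan is to prove the cycle of equivalences (ii)$\Leftrightarrow$(iii) and (iii)$\Leftrightarrow$(i). The equivalence (ii)$\Leftrightarrow$(iii) is immediate from Lemma~\ref{lem:measure-Borel}. That lemma says $\mathfrak B(\mathcal M(Q),w^*)$ is generated by the sets $B_{z,U}$. Hence $\mu\mapsto\nu(\mu)$ is $(\Sigma,\mathfrak B(\mathcal M(Q),w^*))$-measurable if and only if every preimage $\nu^{-1}(B_{z,U})=\{\mu:\langle z,\nu(\mu)\rangle\in U\}$ lies in $\Sigma$. This is exactly the measurability of each scalar map $\mu\mapsto\langle z,\nu(\mu)\rangle$, since open sets generate $\mathfrak B(\RR)$. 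Throughout, $\nu(\mu)$ is the integrated measure on $Q$ from Section~\ref{sec:Measure-integration-and-disintegration}, with $\nu(\mu)(A)=\int_0^T\mu_\tau(A_\tau)\,\dd\tau$. By Theorem~\ref{thm:integrated-eq}, every bounded Borel $f$ on $Q$ satisfies $\int_Q f\,\dd\nu(\mu)=\int_0^T\int_{[-1,0]}f(\tau,s)\,\dd\mu_\tau(s)\,\dd\tau$.

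For (iii)$\Rightarrow$(i), fix $x\in AC([-1,T])$ and $t\in[0,T]$. The quantity in (i) equals $\int_Q \mathbbm 1_{[0,t]}(\tau)x(\tau+s)\,\dd\nu(\mu)$, which is not a pairing with a continuous function because of the indicator. I would approximate $\mathbbm 1_{[0,t]}$ by continuous cutoffs $\chi_m(\tau)$ that equal $1$ on $[0,t]$, equal $0$ on $[t+1/m,T]$, and are linear in between; for $t=T$ take $\chi_m\equiv1$. Set $z_m(\tau,s)=\chi_m(\tau)x(\tau+s)\in C(Q)$, so each $\mu\mapsto\langle z_m,\nu(\mu)\rangle$ is $\Sigma$-measurable by (iii). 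Then
\[
\Big|\langle z_m,\nu(\mu)\rangle-\int_0^t\int_{[-1,0]}x(\tau+s)\,\dd\mu_\tau(s)\,\dd\tau\Big|\le \|x\|_{C([-1,T])}K/m .
\]
So the map in (i) is a pointwise limit of $\Sigma$-measurable functions, hence measurable.

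For (i)$\Rightarrow$(iii), fix $z\in C(Q)$ and take the approximants $z_n=\sum_k\mathbbm 1_{I_{k,n}}(\tau)x_{k,n}(\tau+s)$ from Lemma~\ref{lem:delay-test-approximation}. Since $|\nu(\mu)|(Q)\le KT$ uniformly in $\mu$ (Lemma~\ref{lemma:nu-modulus}), uniform convergence gives $\int z_n\,\dd\nu(\mu)\to\langle z,\nu(\mu)\rangle$ for every $\mu$. The identity of Theorem~\ref{thm:integrated-eq}, applied to the bounded Borel function $z_n$, yields
\[
\int_Q z_n\,\dd\nu(\mu)=\sum_k\int_{I_{k,n}}\int_{[-1,0]}x_{k,n}(\tau+s)\,\dd\mu_\tau(s)\,\dd\tau .
\]
Each summand is a difference of two maps from (i), taken at the endpoints of $I_{k,n}$. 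A half-open interval and a closed one differ by a single point, which is Lebesgue-null, so the endpoint convention does not matter. Each summand is therefore $\Sigma$-measurable, and so is the pointwise limit.

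The main obstacle is the bookkeeping in (i)$\Rightarrow$(iii). There one must check that Theorem~\ref{thm:integrated-eq} legitimately converts the piecewise test functions $z_n$, which are Borel but discontinuous in $\tau$, into iterated integrals. One must also check that the endpoint conventions of the intervals $I_{k,n}$ are invisible to the outer Lebesgue integral. The Borel measurability of $z_n$ and its boundedness settle the first point, and the null-set argument above settles the second. Everything else reduces to pointwise limits of measurable maps, using the uniform total variation bound.
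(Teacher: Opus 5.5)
Your proposal is correct and follows essentially the same route as the paper. It obtains (ii)$\Leftrightarrow$(iii) from Lemma~\ref{lem:measure-Borel}, (iii)$\Rightarrow$(i) via continuous cutoffs $\chi_m(\tau)x(\tau+s)$, and (i)$\Rightarrow$(iii) via the delay-adapted approximants of Lemma~\ref{lem:delay-test-approximation}. The differences are cosmetic: you give the explicit rate $\|x\|_{C([-1,T])}K/m$ where the paper uses dominated convergence, and you route the step-function approximants through $\nu(\mu)$ and Theorem~\ref{thm:integrated-eq} where the paper estimates the iterated integrals directly.
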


\begin{proof}
The equivalence of \ref{thm:equivalent-parameter-measurability-ii} and \ref{thm:equivalent-parameter-measurability-iii} follows directly
from Lemma~\ref{lem:measure-Borel}.
It remains to prove the equivalence of \ref{thm:equivalent-parameter-measurability-i} and \ref{thm:equivalent-parameter-measurability-iii}.

\medskip
\noindent
\ref{thm:equivalent-parameter-measurability-iii}$\Rightarrow$\ref{thm:equivalent-parameter-measurability-i}.
Fix $x\in AC([-1,T])$ and $t\in[0,T]$.
Choose continuous functions $\chi_n:[0,T]\to[0,1]$
by setting
\begin{equation*}
    \chi_n(\tau)
    =
    \begin{cases}
        1,&\tau\le t,\\
        \max\{1-n(\tau-t),0\},&\tau>t.
    \end{cases}
\end{equation*}
Then $(\tau,s)\mapsto\chi_n(\tau)x(\tau+s)$ belongs to $C(Q)$. Condition \ref{thm:equivalent-parameter-measurability-iii} implies that
\begin{equation*}
    \mathfrak{m}_T\ni \mu\mapsto
    \int_0^T\int_{[-1,0]}
    \chi_n(\tau)x(\tau+s)\,
    \dd\mu_\tau(s)\,\dd\tau \in \RR
\end{equation*}
is $(\Sigma,\mathfrak B(\mathbb R))$-measurable. For each fixed $\mu$, the inner integrals multiplied
by $\chi_n(\tau)$ converge to
\begin{equation*}
    \mathbbm 1_{[0,t]}(\tau)
    \int_{[-1,0]}x(\tau+s)\,\dd\mu_\tau(s).
\end{equation*}
Their absolute values are bounded by the integrable
function
\begin{equation*}
    \tau\mapsto \|x\|_{C([-1,T])}\|\mu_\tau\|_{\mathrm{TV}}. 
\end{equation*}
The dominated convergence theorem therefore gives
\begin{equation*}
    \int_0^T\int_{[-1,0]}
    \chi_n(\tau)x(\tau+s)\,
    \dd\mu_\tau(s)\,\dd\tau
    \to
    \int_0^t\int_{[-1,0]}
    x(\tau+s)\,\dd\mu_\tau(s)\,\dd\tau.
\end{equation*}
The limit is $(\Sigma,\mathfrak B(\mathbb R))$-measurable, proving \ref{thm:equivalent-parameter-measurability-i}.

\medskip
\noindent
\ref{thm:equivalent-parameter-measurability-i}$\Rightarrow$\ref{thm:equivalent-parameter-measurability-iii}.
Fix $z\in C(Q)$.
By Lemma~\ref{lem:delay-test-approximation}, there exist
functions
\begin{equation*}
    z_n(\tau,s)
    =
    \sum_{k=0}^{n-1}
    \mathbbm 1_{I_{k,n}}(\tau)x_{k,n}(\tau+s),
    \qquad x_{k,n}\in AC([-1,T]),
\end{equation*}
converging uniformly to $z$. For each $n$, consider
\begin{equation*}
     \mathfrak{m}_T\ni \mu\mapsto
    \int_0^T\int_{[-1,0]}
    z_n(\tau,s)\,\dd\mu_\tau(s)\,\dd\tau \in \RR.
\end{equation*}
This is a finite sum of mappings of the form
\begin{equation*}
     \mathfrak{m}_T\ni \mu\mapsto
    \int_a^b\int_{[-1,0]}
    x_{k,n}(\tau+s)\,\dd\mu_\tau(s)\,\dd\tau\in \RR.
\end{equation*}
Each summand is $(\Sigma,\mathfrak B(\RR))$-measurable by \ref{thm:equivalent-parameter-measurability-i}, since it
is the difference of two integrals with upper limits
$b$ and $a$. The choice of open or closed endpoints
does not affect these integrals, whose outer integration
is with respect to Lebesgue measure.

For every fixed $\mu\in\mathfrak m_T$, we have
\begin{equation*}
    \Big|
            \int_0^T\int_{[-1,0]}
            z_n(\tau,s)\,\dd\mu_\tau(s)\,\dd\tau
            -
            \langle z,\nu(\mu)\rangle
        \Big|
        \le
        \|z_n-z\|_{C(Q)}
        \int_0^T\|\mu_\tau\|_{\mathrm{TV}}\,\dd\tau
        \to0.
\end{equation*}
Thus $\mu\mapsto\langle z,\nu(\mu)\rangle$ is a pointwise
limit of $\Sigma$-measurable functions.
This proves \ref{thm:equivalent-parameter-measurability-iii} and completes the proof.
\end{proof}

\subsection{The induced measurable structure and the integral operator}

Theorem~\ref{thm:equivalent-parameter-measurability}
allows us to introduce the smallest $\sigma$-algebra
needed for the preceding scalar integrals.
Define
\begin{equation*}
    \Sigma_T
    =
    \sigma\!\Big(
        \mu\mapsto
        \int_0^t\int_{[-1,0]}
        x(\tau+s)\,\dd\mu_\tau(s)\,\dd\tau:
        x\in AC([-1,T]),\
        t\in[0,T]
    \Big).
\end{equation*}
Equivalently, writing $\mathcal N_T(\mu)=\nu(\mu)$, we have
\begin{equation*}
    \begin{aligned}
        \Sigma_T
        &=
        \sigma\bigl(
            \mu\mapsto\langle z,\nu(\mu)\rangle:
            z\in C(Q)
        \bigr)\\
        &=
        \{
            \mathcal N_T^{-1}(B):
            B\in\mathfrak B(\mathcal M(Q),w^*)
        \}.
    \end{aligned}
\end{equation*}

The first characterization is adapted to the integral
equation, while the second and third describe the same
measurable structure in terms of measures on the
time--delay domain.

\begin{lemma}
    \label{cor:integral-operator-measurable}
Fix $u_0\in AC([-1,0])$.
For every fixed $x\in AC_{u_0}([0,T])$, the mapping
\begin{equation*}
    \mathfrak m_T\ni\mu
    \mapsto \mathfrak G_{\mu} (x)\in AC([0,T])
\end{equation*}
is $(\Sigma_T,\mathfrak B(AC([0,T])))$-measurable.
\end{lemma}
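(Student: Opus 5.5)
By Lemma~\ref{lem:AC-evaluations} applied on $[a,b]=[0,T]$ with a countable dense set $D\subset[0,T]$ containing $0$ and $T$, it suffices to show that for every $t\in D$ the scalar map
\begin{equation*}
    \mathfrak m_T\ni\mu\mapsto \pi_t\bigl(\mathfrak G_\mu(x)\bigr)=\mathfrak G_\mu(x)[t]\in\mathbb R
\end{equation*}
is $(\Sigma_T,\mathfrak B(\mathbb R))$-measurable. Before this, I note that $\mathfrak G_\mu(x)$ really lies in $AC([0,T])$. This holds because its derivative $\tau\mapsto\int_{[-1,0]}x(\tau+s)\,\dd\mu_\tau(s)$ is in $L_\infty([0,T])$ by the first auxiliary lemma of Section~\ref{sec:Main-eq}. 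So the map is well defined into $AC([0,T])$, and only scalar measurability remains.

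For the scalar evaluations, I will use the convention that $x$ is extended to $[-1,T]$ by $u_0$ on $[-1,0]$. Since $u_0\in AC([-1,0])$, $x\in AC_{u_0}([0,T])$ and $x(0)=u_0(0)$, this extension $\tilde x$ belongs to $AC([-1,T])$. Then
\begin{equation*}
    \mathfrak G_\mu(x)[t]=u_0(0)+\int_0^t\int_{[-1,0]}\tilde x(\tau+s)\,\dd\mu_\tau(s)\,\dd\tau ,
\end{equation*}
which is a constant plus one of the generating maps of $\Sigma_T$, with $\tilde x\in AC([-1,T])$ and $t\in[0,T]$. It is therefore $\Sigma_T$-measurable by the definition of $\Sigma_T$, that is, by condition~\ref{thm:equivalent-parameter-measurability-i} of Theorem~\ref{thm:equivalent-parameter-measurability} with $\Sigma=\Sigma_T$. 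Lemma~\ref{lem:AC-evaluations} then gives Borel measurability into $AC([0,T])$.

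The only step needing care is checking that the glued function $\tilde x$ is absolutely continuous on the whole of $[-1,T]$, so that it is an admissible test function in the definition of $\Sigma_T$. This is exactly where the hypothesis $u_0\in AC([-1,0])$, rather than merely $u_0\in C$, is used. The gluing is immediate because the two absolutely continuous pieces agree at $0$. If one wanted $u_0\in C$ only, I would instead use the characterization~\ref{thm:equivalent-parameter-measurability-iii} with the cutoff functions $\chi_n$ from the proof of Theorem~\ref{thm:equivalent-parameter-measurability}, since then $(\tau,s)\mapsto\chi_n(\tau)\tilde x(\tau+s)\in C(Q)$ and dominated convergence gives the limit. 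For the lemma as stated, however, the direct route suffices.
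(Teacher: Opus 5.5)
Your proposal is correct and follows essentially the same route as the paper: you obtain measurability of each evaluation $\mu\mapsto\mathfrak G_\mu(x)[t]$ directly from the definition of $\Sigma_T$, and then apply Lemma~\ref{lem:AC-evaluations}. You also make explicit two details the paper leaves implicit, namely that the glued extension of $x$ by $u_0$ lies in $AC([-1,T])$ and that $\mathfrak G_\mu(x)$ indeed belongs to $AC([0,T])$.
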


\begin{proof}
For $t\in[0,T]$, measurability of
$\mu\mapsto \mathfrak G_{\mu} (x)[t]$ follows from the definition
of $\Sigma_T$.
Lemma~\ref{lem:AC-evaluations} now gives measurability
as a mapping into $AC([0,T])$.
\end{proof}

The remaining step is to transfer this measurability
from the integral operator to its fixed point.
This step uses the continuity of $x\mapsto \mathfrak G_{\mu} (x)$
and convergence of the successive approximations,
rather than any additional measurability assumption
on the solution. Indeed, once these properties have been established,
the iterates starting from a fixed element of $AC([0,T])$
are measurable functions of $\mu$, and their limit
is the measurable solution map.

For a random parametrization, the additional assumption
is then that, for each $T>0$,
\begin{equation*}
    \Omega\ni\omega
    \mapsto
    (\mu_{\theta_t\omega})_{t\in[0,T]}
    \in\mathfrak m_T
\end{equation*}
is $(\mathfrak F,\Sigma_T)$-measurable.
Measurability of the random solution subsequently follows
by composition with the deterministic solution map.

\begin{theorem}\label{thrm:mesur-fix}
Let $(X,d)$ be a nonempty separable complete metric space,
and let $(\Omega,\mathfrak F)$ be a measurable space.
Suppose that $F:X\times\Omega\to X$ satisfies the following
conditions:
\begin{itemize}
    \item For every $x\in X$, the map
    $\omega\mapsto F(x,\omega)$ is
    $(\mathfrak F,\mathfrak B(X))$-measurable.
    \item There exists $\varkappa\in[0,1)$ such that
    \begin{equation*}
        d(F(x,\omega),F(y,\omega))
        \leq \varkappa\,d(x,y),
        \qquad x,y\in X,\quad \omega\in\Omega.
    \end{equation*}
\end{itemize}
Then each $F(\cdot,\omega)$ has a unique fixed point
$x^{\omega}$, and the map
\begin{equation*}
    \Omega\ni\omega\mapsto x^{\omega}\in X
\end{equation*}
is $(\mathfrak F,\mathfrak B(X))$-measurable.
\end{theorem}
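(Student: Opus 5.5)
The plan is to use the Picard iteration and show that every iterate is measurable, so the fixed point is measurable as a pointwise limit. Existence and uniqueness of $x^\omega$ follow directly from the Banach fixed-point theorem applied to each $F(\cdot,\omega)$ on the complete space $(X,d)$. For measurability, fix any $x_0\in X$ and set $x_0(\omega):=x_0$ and $x_{n+1}(\omega):=F(x_n(\omega),\omega)$. The standard contraction estimate
\begin{equation*}
    d(x_n(\omega),x^\omega)\le \frac{\varkappa^n}{1-\varkappa}\,d(F(x_0,\omega),x_0)
\end{equation*}
gives $x_n(\omega)\to x^\omega$ for every $\omega$. A pointwise limit of $(\mathfrak F,\mathfrak B(X))$-measurable maps into a metric space is measurable. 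To see this, note that for closed $C\subset X$ we have $\{\omega: x^\omega\in C\}=\bigcap_k\bigcup_N\bigcap_{n\ge N}\{\omega: d(x_n(\omega),C)<1/k\}$, and $\omega\mapsto d(x_n(\omega),C)$ is measurable. It therefore remains to show inductively that each $\omega\mapsto x_n(\omega)$ is measurable.

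The inductive step is the main obstacle. We must show that $\omega\mapsto F(y(\omega),\omega)$ is measurable whenever $y:\Omega\to X$ is measurable. In other words, $F$ must be a Carathéodory map: measurable in $\omega$ and continuous (here uniformly Lipschitz) in $x$. I would argue by approximation. Using separability, fix a countable dense set $\{d_j\}\subset X$ and define the countably-valued maps
\begin{equation*}
    y_m(\omega):=d_{j_m(\omega)},\qquad j_m(\omega):=\min\{j: d(y(\omega),d_j)<1/m\}.
\end{equation*}
Each set $\{j_m=j\}$ belongs to $\mathfrak F$, because $\omega\mapsto d(y(\omega),d_j)$ is measurable. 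Then
\begin{equation*}
    F(y_m(\omega),\omega)=\sum_j \mathbbm 1_{\{j_m=j\}}(\omega)\,F(d_j,\omega)
\end{equation*}
is measurable: for any Borel $B$, its preimage is $\bigcup_j\bigl(\{j_m=j\}\cap\{F(d_j,\cdot)\in B\}\bigr)$, and each piece is measurable by the first hypothesis. Finally, the Lipschitz bound gives $d(F(y_m(\omega),\omega),F(y(\omega),\omega))\le\varkappa/m\to0$, so $F(y(\cdot),\cdot)$ is a pointwise limit of measurable maps and is measurable by the limit argument above.

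Combining the two parts, induction shows that every Picard iterate $x_n$ is measurable, and hence so is their pointwise limit $\omega\mapsto x^\omega$. Separability is used essentially in the inductive step, to build the countable approximations; completeness is used only to obtain the fixed point.
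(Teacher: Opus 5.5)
Your proof is correct and follows the same route as the paper. Both arguments start the Picard iteration at a fixed point, show that each iterate is measurable, and pass to the pointwise limit. The only difference is in the inductive step. The paper cites the Carath\'eodory lemma \cite[Lemma~4.51]{AliB} to get $(\mathfrak B(X)\otimes\mathfrak F)$-measurability of $F$ and then composes with $\omega\mapsto(v_n(\omega),\omega)$. You instead prove measurability of $\omega\mapsto F(y(\omega),\omega)$ directly, using countably-valued approximations of $y$ built from a dense sequence, which makes the argument self-contained.
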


\begin{proof}
For every $\omega\in\Omega$, the contraction assumption
implies that $F(\cdot,\omega)$ is continuous.
For every $x\in X$, the mapping $F(x,\cdot)$ is measurable
by assumption. Since $X$ is separable and metrizable,
the Carath\'eodory measurability theorem
\cite[Lemma~4.51]{AliB} implies that $ F:X\times\Omega\to X$
is $(\mathfrak B(X)\otimes\mathfrak F,\mathfrak B(X))$-measurable. Choose $x_*\in X$ and define the successive approximations
\begin{equation*}
    v_0(\omega):=x_*,
    \qquad
    v_{n+1}(\omega):=F(v_n(\omega),\omega),
    \qquad n\geq0.
\end{equation*}
We show inductively that every $v_n:\Omega\to X$ is
measurable. This is immediate for the constant map $v_0$.
If $v_n$ is measurable, then so is
\begin{equation*}
    \Omega\ni\omega\mapsto(v_n(\omega),\omega)
    \in X\times\Omega
\end{equation*}
with respect to $\mathfrak F$ and
$\mathfrak B(X)\otimes\mathfrak F$.
Composing this map with $F$ proves measurability of
$v_{n+1}$. By the Banach
fixed-point theorem $F(\cdot,\omega)$ has a unique
fixed point $x^\omega$, and
\begin{equation*}
    v_n(\omega)\to x^\omega
\end{equation*}
as $n\to\infty$. The mapping $\omega\mapsto x^\omega$ is therefore measurable
as a pointwise limit of measurable mappings into a metric
space.
\end{proof}

The same conclusion holds if $F(\cdot,\omega)$ is a
contraction with respect to a complete metric $d_\omega$
inducing the same topology as $d$.
The contraction constant may also depend on $\omega$,
provided that it is strictly less than one for every
$\omega$. Indeed, $F(\cdot,\omega)$ remains continuous with respect to $d$, so the measurable-composition argument above
still applies. The Picard iterates converge in $d_\omega$
and hence in $d$.

Applying this result to the continuation operator on the
space of absolutely continuous extensions of a fixed initial history yields measurability of the solution
with respect to $\omega$.
For each fixed time, restriction and translation of the
solution then give measurability of
\begin{equation*}
    \Omega\ni\omega
    \mapsto U_\omega(t)u_0\in AC([-1,0]).
\end{equation*}
Together with the previously established continuity
in time and in the initial history, this yields the
joint measurability required for the skew-product
semiflow.
}

\begin{proposition}\label{prop:U-measure} The mapping 
\begin{equation*}
     \Omega \ni \omega \mapsto U_\omega(t)u_0 \in AC([-1,0])
\end{equation*}
is $(\mathfrak{F},\mathfrak{B}(AC([-1,0])))$-measurable.
\end{proposition}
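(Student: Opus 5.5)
Fix $t\ge0$ and $u_0\in AC([-1,0])$, and choose $T\ge t$ (any $T>0$ with $T\ge t$ will do; for $t=0$ the map is constant). The plan is to factor the map through the parameter space,
\begin{equation*}
    \Omega\ni\omega\ \longmapsto\ \mu^\omega:=(\mu_{\theta_\tau\omega})_{\tau\in[0,T]}\in\mathfrak m_T
    \ \longmapsto\ z(\cdot;\mu^\omega,u_0){\restriction}_{[0,T]}\in AC_{u_0}([0,T])
    \ \longmapsto\ U_\omega(t)u_0\in AC([-1,0]).
\end{equation*}
The first arrow is $(\mathfrak F,\Sigma_T)$-measurable by the standing assumption on the random parametrization. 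It therefore suffices to show two things: that the deterministic solution map $\mu\mapsto z(\cdot;\mu,u_0){\restriction}_{[0,T]}$ is $(\Sigma_T,\mathfrak B(AC([0,T])))$-measurable, and that the last arrow is Borel measurable (in fact continuous) on the relevant set.

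\textbf{Measurability of the solution map.} I would apply Theorem~\ref{thrm:mesur-fix} with $X=AC_{u_0}([0,T])$, the measurable space $(\mathfrak m_T,\Sigma_T)$, and $F(x,\mu)=\mathfrak G_\mu(x)$. The space $X$ is a closed affine subset of the separable Banach space $AC([0,T])$, as noted in Lemma~\ref{lem:AC-evaluations}, so it is a nonempty separable complete metric space. Measurability of $\mu\mapsto\mathfrak G_\mu(x)$ for each fixed $x$ is Lemma~\ref{cor:integral-operator-measurable}.

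The contraction hypothesis is the delicate point. Proposition~\ref{prop:contraction-ODEs-on-AC} gives a contraction only with respect to the $\mu$-dependent norm $\|\cdot\|_{\kappa,\mu,AC}$. I would therefore invoke the remark following Theorem~\ref{thrm:mesur-fix}. These norms are uniformly equivalent to the standard $AC$ norm, with constants $e^{-\kappa KT}$ and $1$, so they induce the same topology. The contraction constant $1/\kappa<1$ does not depend on $\mu$. Each $\mathfrak G_\mu$ is therefore continuous for the standard metric, and the successive-approximation argument goes through. Concretely, the Picard iterates $v_n(\mu)$ are measurable by induction: they are compositions of $\mu\mapsto(v_n(\mu),\mu)$ with $F$, and $F$ is jointly measurable by the Carath\'eodory theorem. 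The iterates converge in $AC([0,T])$ to the fixed point, which is the solution by Proposition~\ref{prop:unigue-sol-in-C} and the integral identity~\eqref{eq:int-variant}. As a pointwise limit of measurable maps into a metric space, the fixed point depends measurably on $\mu$.

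\textbf{From the continuation to $U_\omega(t)u_0$.} Let $R_t:AC_{u_0}([0,T])\to AC([-1,0])$ send $x$ to $s\mapsto\tilde x(t+s)$, where $\tilde x$ is the extension of $x$ by $u_0$ on $[-1,0]$. Since $u_0\in AC$ and $x(0)=u_0(0)$, the extension $\tilde x$ lies in $AC([-1,T])$. Moreover $R_t$ is affine and Lipschitz:
\begin{equation*}
    \|R_tx-R_ty\|_{AC([-1,0])}\le\|x-y\|_{AC([0,T])},
\end{equation*}
because the two extensions agree on $[-1,0]$, and the difference is controlled by its value at a point in $[0,T]$ plus the $L_1$ norm of the derivative difference. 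Hence $R_t$ is continuous and in particular Borel. By the definition of $U_\omega(t)$,
\begin{equation*}
    U_\omega(t)u_0=R_t\bigl(z(\cdot;\mu^\omega,u_0){\restriction}_{[0,T]}\bigr),
\end{equation*}
so the map in the statement is a composition of measurable maps.

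The main obstacle I expect is the $\mu$-dependence of the contraction metric in the fixed-point step. It is handled by the uniform equivalence of the norms. Alternatively, one could work on $C_{u_0}([0,T])$ with the $\mu$-independent metric $d_\kappa$, obtain measurability into $C([0,T])$ via evaluations, and then note that the solution equals $\mathfrak G_\mu$ applied to itself. That route would require joint measurability of $(x,\mu)\mapsto\mathfrak G_\mu x$ into $AC$, which again follows from the Carath\'eodory theorem, since the map is continuous in $x$ for the sup norm (Proposition~\ref{prop:G_operator_continuous-AC}) and measurable in $\mu$.
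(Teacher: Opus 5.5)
Your proof is correct and follows essentially the paper's route. The paper, too, combines Lemma~\ref{cor:integral-operator-measurable} with Theorem~\ref{thrm:mesur-fix}, in the form of the subsequent remark that allows the uniformly equivalent, $\mu$-dependent norms of Proposition~\ref{prop:contraction-ODEs-on-AC}, on $AC_{u_0}([0,T])$. It then composes with the $(\mathfrak F,\Sigma_T)$-measurable parametrization $\omega\mapsto(\mu_{\theta_\tau\omega})_{\tau\in[0,T]}$ and finishes with translation and restriction. You additionally supply details the paper leaves implicit, notably the explicit Lipschitz bound for the map $R_t$.
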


\begin{proposition}\label{prop:U-joint-measure} The mapping 
    \begin{equation*}
 \RR^{+} \times \Omega \times AC([-1,0]) \ni (t,\omega,u) \mapsto U_{\omega}(t)\,u \in AC([-1,0]) 
\end{equation*} 
is $(\mathfrak{B}(\RR)\otimes\mathfrak{F}\otimes \mathfrak{B}(AC([-1,0])),\mathfrak{B}(AC([-1,0])))$-measurable.
\end{proposition}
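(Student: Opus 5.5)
The statement to prove is Proposition~\ref{prop:U-joint-measure}, i.e. joint measurability of $(t,\omega,u)\mapsto U_\omega(t)u$. The plan is to combine three ingredients. The first is measurability in $\omega$ for fixed $(t,u)$, which is Proposition~\ref{prop:U-measure}. The second is continuity in $(t,u)$ for fixed $\omega$, which is Lemma~\ref{lemma:joint-continu}. The third is the Carath\'eodory measurability theorem \cite[Lemma~4.51]{AliB}, which was already used in Theorem~\ref{thrm:mesur-fix}.

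The map is a Carath\'eodory function on the product of the separable metrizable space $Y:=\RR^+\times AC([-1,0])$ with the measurable space $(\Omega,\mathfrak F)$. Separability of $AC([-1,0])$ was noted in the proof of Lemma~\ref{lem:AC-evaluations}. The target $AC([-1,0])$ is a separable metric space.

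First, fix $\omega$. Lemma~\ref{lemma:joint-continu} is proved for $t$ ranging over $[0,T]$ with constants depending on $T$. Since $T>0$ is arbitrary, continuity holds on all of $\RR^+\times AC([-1,0])$; continuity is a local property in $t$.

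Second, fix $(t,u)$. Proposition~\ref{prop:U-measure} gives $(\mathfrak F,\mathfrak B(AC([-1,0])))$-measurability of $\omega\mapsto U_\omega(t)u$. If that proposition is to be justified here too, I would argue as follows. Choose $T\ge t$ and apply Theorem~\ref{thrm:mesur-fix} to $F(x,\omega):=\mathfrak G_{(\mu_{\theta_\tau\omega})_\tau}(x)$ on $X=AC_{u}([0,T])$. This $X$ is separable and complete. Measurability of $F(x,\cdot)$ follows from Lemma~\ref{cor:integral-operator-measurable}, composed with the assumed $(\mathfrak F,\Sigma_T)$-measurability of $\omega\mapsto(\mu_{\theta_\tau\omega})_\tau$. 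For the contraction, use the $\omega$-dependent norm of Proposition~\ref{prop:contraction-ODEs-on-AC}, as allowed in the remark following Theorem~\ref{thrm:mesur-fix}. The fixed point, extended by $u$ and then translated and restricted, is continuous linear in the solution. Hence $U_\omega(t)u$ is measurable.

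Third, apply \cite[Lemma~4.51]{AliB}. A function that is continuous in $y\in Y$ and measurable in $\omega$ is jointly $(\mathfrak B(Y)\otimes\mathfrak F)$-measurable. Finally, identify $\mathfrak B(Y)=\mathfrak B(\RR^+)\otimes\mathfrak B(AC([-1,0]))$; this holds because both factors are second countable. Reordering the factors gives the claimed $(\mathfrak B(\RR)\otimes\mathfrak F\otimes\mathfrak B(AC([-1,0])))$-measurability, with $\mathfrak B(\RR^+)$ being the trace of $\mathfrak B(\RR)$.

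The main obstacle is the bookkeeping in the second step. One point is that the parameter path on $[0,T]$ must be defined for every $T$. The other is verifying that Theorem~\ref{thrm:mesur-fix} applies with metrics $d_\omega$ that are equivalent but $\omega$-dependent. The Carath\'eodory step itself is routine.
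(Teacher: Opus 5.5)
Your proposal is correct and follows the paper's proof. The paper likewise combines Lemma~\ref{lemma:joint-continu} (continuity in $(t,u)$ for fixed $\omega$) with Proposition~\ref{prop:U-measure} (measurability in $\omega$ for fixed $(t,u)$) and concludes with the Carath\'eodory measurability lemma \cite[Lemma~4.51]{AliB}. Your extra bookkeeping is sound and makes explicit what the paper leaves implicit: passing from $[0,T]$ to all of $\RR^{+}$, identifying $\mathfrak B(\RR^{+}\times AC([-1,0]))$ with the product $\sigma$-algebra, and sketching Proposition~\ref{prop:U-measure} via Theorem~\ref{thrm:mesur-fix} with the $\omega$-dependent norms of Proposition~\ref{prop:contraction-ODEs-on-AC}. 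One small correction: the extension--translation map is affine rather than linear in the fixed point, but continuity is all you need.
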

\begin{proof}
    Lemma~\ref{lemma:joint-continu} together with Proposition~\ref{prop:U-measure} implies that the mapping 
    \begin{equation*}
        \RR^{+} \times \Omega \times AC([-1,0]) \ni (t,\omega,u) \mapsto U_{\omega}(t)\,u \in AC([-1,0])
    \end{equation*}
    is a  Carath\'eodory mapping and is therefore jointly measurable by \cite[Lemma 4.51]{AliB}. 
\end{proof}

\begin{theorem}
    \label{thm:AC-measurable-semiflow}
Let $(\Omega,\mathfrak F,\mathbb P,(\theta_t)_{t\in\mathbb R})$
be a metric dynamical system. Assume that, for every $T>0$,
the coefficient path $(\mu_{\theta_t\omega})_{t\in[0,T]}$ belongs to $\mathfrak m_T$ for every $\omega\in\Omega$, and
that the mapping
\begin{equation*}
    \Omega\ni\omega
    \mapsto
    (\mu_{\theta_t\omega})_{t\in[0,T]}
    \in\mathfrak m_T
\end{equation*}
is $(\mathfrak F,\Sigma_T)$-measurable. Then the solution operators $(U_\omega(t))_{\omega\in\Omega,t\geq0}$
define a measurable linear skew-product semiflow on
$AC([-1,0])$ covering $(\theta_t)_{t\in\mathbb R}$.
\end{theorem}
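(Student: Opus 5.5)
The plan is to check the items in the definition of a measurable linear skew-product semiflow from Subsection~\ref{subsec:MLSPS} one at a time, using the results already established for $AC([-1,0])$. First, $U_\omega(0)=\mathrm{Id}$ holds by construction, since the zero-time segment of the solution is $u_0$. Linearity of $U_\omega(t)$ follows from uniqueness in Proposition~\ref{prop:unigue-sol-in-C}. If $z^1$ and $z^2$ solve the problem with histories $u_0^1$ and $u_0^2$, then $\alpha z^1+\beta z^2$ satisfies the integral equation~\eqref{eq:int-variant} with history $\alpha u_0^1+\beta u_0^2$, because the right-hand side is linear in $z$. Boundedness on $AC([-1,0])$ is Lemma~\ref{lemma:U-cont-Ac}, applied with $T\ge t$. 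The cocycle identity~\eqref{eq-cocycle} is Lemma~\ref{lemma:cocycle-U-AC}. That lemma relies on uniqueness, applied on an interval long enough to contain $[0,t+s]$.

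It remains to prove joint measurability. This is exactly Proposition~\ref{prop:U-joint-measure}, which combines Lemma~\ref{lemma:joint-continu} with Proposition~\ref{prop:U-measure}. So the real work is Proposition~\ref{prop:U-measure}, which I would prove here under the stated hypothesis. Fix $t\ge0$ and $u_0\in AC([-1,0])$, and choose $T\ge t$.

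Step one uses $X=AC_{u_0}([0,T])$. This is a closed affine subset of the separable space $AC([0,T])$ (separable by Lemma~\ref{lem:AC-evaluations}), so it is complete and separable. Take $F(x,\mu)=\mathfrak G_\mu(x)$ on $X\times\mathfrak m_T$, with the $\sigma$-algebra $\Sigma_T$ on $\mathfrak m_T$. Measurability in $\mu$ is Lemma~\ref{cor:integral-operator-measurable}. The contraction property comes from Proposition~\ref{prop:contraction-ODEs-on-AC}, with $\kappa$ independent of $\mu$. There the metric $\|\cdot\|_{\kappa,\mu,AC}$ depends on $\mu$, but it is uniformly equivalent to the standard norm. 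By the remark after Theorem~\ref{thrm:mesur-fix}, it is therefore enough that each $F(\cdot,\mu)$ is continuous in the standard metric and contracts in the equivalent metric $d_\mu$. Theorem~\ref{thrm:mesur-fix} then shows that $\mu\mapsto z(\cdot;\mu,u_0){\restriction}_{[0,T]}$ is $(\Sigma_T,\mathfrak B(AC([0,T])))$-measurable.

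Step two composes with the assumed $(\mathfrak F,\Sigma_T)$-measurable map $\omega\mapsto(\mu_{\theta_\tau\omega})_{\tau\in[0,T]}$. This gives measurability of $\omega\mapsto z{\restriction}_{[0,T]}$. Step three passes to the segment $U_\omega(t)u_0=z_t$. The map $AC([0,T])\ni y\mapsto$ (the concatenation of $u_0$ with $y$, shifted by $t$ and restricted to $[-1,0]$) is continuous and affine into $AC([-1,0])$. Its norm is bounded by $|y(t-1)|$ or $|u_0(t-1)|$ plus $L_1$ norms of derivatives. Alternatively, one can use Lemma~\ref{lem:AC-evaluations}: each evaluation $U_\omega(t)u_0(r)$ is either a constant $u_0(t+r)$ or an evaluation of $z$, so it is measurable.

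The main obstacle I expect is step one. A solution on $[0,T]$ must be consistent across different $T$, and the fixed-point space must meet the separability requirement of Theorem~\ref{thrm:mesur-fix}. Consistency follows from uniqueness: the restriction of a path in $\mathfrak m_{T'}$ to $[0,T]$ lies in $\mathfrak m_T$ and gives the same solution. The $\mu$-dependent metric is handled by the remark following Theorem~\ref{thrm:mesur-fix}. Finally, the Carath\'eodory lemma \cite[Lemma~4.51]{AliB} in Proposition~\ref{prop:U-joint-measure} needs $\RR^+\times AC([-1,0])$ to be separable metrizable, which it is.
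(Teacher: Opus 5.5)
Your proposal is correct and follows the paper's proof. You verify the identity at time zero, linearity, boundedness and the cocycle property through the same results, and you get joint measurability from Proposition~\ref{prop:U-joint-measure}. Your argument for Proposition~\ref{prop:U-measure} is exactly the route the paper sketches before that proposition: Theorem~\ref{thrm:mesur-fix} together with its remark, used to handle the $\mu$-dependent norm of Proposition~\ref{prop:contraction-ODEs-on-AC}, then composition with the parametrization, then translation and restriction. One small slip: your concatenation map in step three should be defined on $AC_{u_0}([0,T])$, not on all of $AC([0,T])$. This is harmless, since the solution map takes values in $AC_{u_0}([0,T])$.
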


\begin{proof}
Propositions~\ref{prop:unigue-sol-in-C} and~\ref{prop:Gronwal-determine-bound} ensure that the solution operators
are well defined on $AC([-1,0])$ for every $t\geq0$.
Uniqueness implies that solutions constructed on different
finite time intervals agree on their common domains.

The initial condition gives $U_\omega(0)=\mathrm{Id}_{AC}$,
and Lemma~\ref{lemma:cocycle-U-AC} establishes the cocycle identity. Linearity follows from the linearity of the equation and
uniqueness in Proposition~\ref{prop:unigue-sol-in-C}, while boundedness follows
from Lemma~\ref{lemma:U-cont-Ac}.

Proposition~\ref{prop:U-joint-measure} provides the joint measurability of
$(t,\omega,u)\mapsto U_\omega(t)u$.
Thus all the defining properties of a measurable linear
skew-product semiflow are satisfied.
\end{proof}

\begin{remark}\label{remark-U-notation}
The same solution construction also defines a measurable
linear skew-product semiflow on $C([-1,0])$.
We denote its solution operators by $U_\omega^C(t)$ and
those on $AC([-1,0])$ by $U_\omega^{AC}(t)$.
Existence and uniqueness yield well-defined linear operators,
the identity at time zero, and the cocycle property.
Moreover, the Gr{\"o}nwall estimate gives
\begin{equation*}
    \|U_\omega^C(t)u\|_C
    \leq e^{Kt}\|u\|_C,
    \qquad t\geq0,\quad u\in C([-1,0]).
\end{equation*}

To verify measurability, fix $t\geq0$ and $u\in C([-1,0])$.
Choose a sequence $(u_n)$ in $AC([-1,0])$ converging to $u$
in the supremum norm. If
$i:AC([-1,0])\hookrightarrow C([-1,0])$ denotes the natural
inclusion, uniqueness and the preceding estimate imply
\begin{equation*}
    \|U_\omega^C(t)u-iU_\omega^{AC}(t)u_n\|_C
    \leq e^{Kt}\|u-u_n\|_C
    \to0.
\end{equation*}
Since each mapping
$\omega\mapsto iU_\omega^{AC}(t)u_n$ is measurable,
so is $\omega\mapsto U_\omega^C(t)u$. For each fixed $\omega$, continuity of solution segments
in the supremum norm gives continuity of
$t\mapsto U_\omega^C(t)u$.
Together with the locally uniform operator bound above,
this implies joint continuity in $(t,u)$.
The Carath\'eodory measurability argument used in
Proposition~\ref{prop:U-joint-measure} therefore yields joint measurability of
\begin{equation*}
    [0,\infty)\times\Omega\times C([-1,0])
    \ni(t,\omega,u)\mapsto U_\omega^C(t)u
    \in C([-1,0]).
\end{equation*}
Thus the solution operators define measurable linear
semiflows on both phase spaces, related by
\begin{equation*}
    U_\omega^C(t)i=iU_\omega^{AC}(t),
    \qquad t\geq0.
\end{equation*}
These two semiflows will be considered in the next section,
where we establish their Oseledets decompositions.
\end{remark}

\section{Oseledets decompositions}\label{sec:Oseledets}
In this section, we establish Oseledets decompositions for the
measurable linear semiflows generated by the delay equation.
We first consider the phase space $C([-1,0])$ and apply
\cite[Theorem~3.4]{MierczynskiNovoObaya2020}, together with
the standing assumptions of Section~\ref{sec:Carath-sol}.
In addition to the previously established measurability
properties, this requires suitable integrability estimates
and compactness of the time-one solution operator.
The integrability conditions follow from the Gr{\"o}nwall estimates,
whereas compactness will be proved below using the
Arzel\`a--Ascoli theorem. We also impose the additional
assumption
\begin{equation*}
    \lambda_{\mathrm{top}}^C>-\infty,
\end{equation*}
where $\lambda_{\mathrm{top}}^C$ denotes the top Lyapunov
exponent of the semiflow on $C([-1,0])$.

We then pass to the phase space $AC([-1,0])$ by applying
\cite[Theorem~3.6]{Kryspin2024}.
The key property is that the time-one solution operator maps
continuous initial histories into absolutely continuous
solution segments. Together with appropriate bounds on this
regularization operator and compatibility of the Borel
structures of the two phase spaces, this allows the Oseledets
decomposition to be transferred to $AC([-1,0])$, with the
same Lyapunov exponents. As in the Remark~\ref{remark-U-notation} the corresponding solution operators are denoted by
$U_\omega^C(t)$ and $U_\omega^{AC}(t)$.
\begin{theorem}\label{prop:compact-U-C}
The solution operator
\begin{equation*} 
        U^C_\omega(1):C([-1,0])
    \to C([-1,0]),
    \qquad
    \bigl(U^C_\omega(1)u_0\bigr)(s)
    =z(1+s,\mu,u_0),
\end{equation*}
is compact.
\end{theorem}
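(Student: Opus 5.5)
The plan is to apply the Arzel\`a--Ascoli theorem to the image of the closed unit ball $B=\{u_0\in C([-1,0]):\|u_0\|_C\le 1\}$ under $U^C_\omega(1)$. Since the maximal delay equals one and we evaluate at time one, the segment $z_1$ on $[-1,0]$ corresponds to the values $z(r)$ with $r=1+s\in[0,1]$. It therefore lies entirely in the region where the solution is given by the Carath\'eodory integral formula \eqref{eq:int-variant}, and no part of the rough initial history $u_0$ is reproduced. This is the structural reason compactness holds at time one, and I would state it first.

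Uniform boundedness follows directly from Proposition~\ref{prop:Gronwal-determine-bound} with $T=1$:
\begin{equation*}
\sup_{s\in[-1,0]}|z(1+s)|\le \sup_{-1\le r\le 1}|z(r)|\le e^{K}\|u_0\|_C\le e^{K}.
\end{equation*}

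For equicontinuity I would take $-1\le s<s'\le 0$ and write $r=1+s$, $r'=1+s'$, both in $[0,1]$. Subtracting the integral identity \eqref{eq:int-variant} at $r'$ and at $r$ gives
\begin{equation*}
|z(r')-z(r)|\le\int_r^{r'}\Big|\int_{[-1,0]}z(\tau+\sigma)\,\dd\mu_\tau(\sigma)\Big|\,\dd\tau\le \int_r^{r'}\|\mu_\tau\|_{\mathrm{TV}}\sup_{-1\le\xi\le 1}|z(\xi)|\,\dd\tau\le Ke^{K}|s'-s|.
\end{equation*}
The image of $B$ is thus uniformly Lipschitz with constant $Ke^{K}$, independent of $u_0\in B$ and of $\omega$. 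By Arzel\`a--Ascoli, $U^C_\omega(1)B$ is relatively compact in $C([-1,0])$. Since $U^C_\omega(1)$ is linear and bounded, it is a compact operator.

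The only step needing care is that the segment at time exactly one avoids the history interval. One must check that $s=-1$ gives $r=0$, where $z(0)=u_0(0)$ agrees with the integral formula, so the Lipschitz estimate holds on the whole closed interval. I do not expect any real obstacle beyond this. The bound is uniform in $\omega$ because only $\|\mu_{\theta_\tau\omega}\|_{\mathrm{TV}}\le K$ is used. As a byproduct, the same estimate shows that $U^C_\omega(1)$ maps $C$ into $AC$, with $\|U_\omega(1)u_0\|_{AC}\le (1+K)e^{K}\|u_0\|_C$. This is the regularization used later to transfer the Oseledets decomposition to $AC([-1,0])$.
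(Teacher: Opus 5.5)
Your proposal is correct and matches the paper's proof: Arzel\`a--Ascoli applied to the image of the unit ball, with uniform boundedness from the Gr\"onwall estimate and equicontinuity from subtracting the integral identity on $[0,1]$. The only difference is minor. The paper bounds increments by $\exp\bigl(\int_0^1\|\mu_{\theta_\tau\omega}\|_{\mathrm{TV}}\,\dd\tau\bigr)\int_{1+a}^{1+b}\|\mu_{\theta_\tau\omega}\|_{\mathrm{TV}}\,\dd\tau$ and invokes absolute continuity of the Lebesgue integral, so its argument also works under the weaker integrability hypothesis. You instead use the uniform bound $K$ and obtain the explicit Lipschitz constant $Ke^{K}$.
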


\begin{proof}
We show that the image of the
closed unit ball is uniformly bounded and equicontinuous. Let $u_0\in C([-1,0])$ satisfy
$\|u_0\|_{C{([-1,0])}}\le 1$. The integral formulation gives
\begin{equation*}
       z(t,\mu,u_0)
    =
    u_0(0)
    +
    \int_0^t\int_{[-1,0]}
    z(\tau+s,\mu,u_0)\,
    \dd\mu_{\theta_\tau\omega}(s)\,\dd\tau,
    \qquad t\in[0,1].
\end{equation*}
Consequently,
\begin{equation*}
        \sup_{-1\le r\le t}|z(r,\mu,u_0)|
    \le
    1+
    \int_0^t
    \|\mu_{\theta_\tau\omega}\|_{\mathrm{TV}}
    \sup_{-1\le r\le\tau}|z(r,\mu,u_0)|\,\dd\tau.
\end{equation*}
By Gr{\"o}nwall 's inequality,
\begin{equation*}
       \sup_{-1\le r\le t}|z(r,\mu,u_0)|\le\exp\!\Big(\int_0^t\|\mu_{\theta_\tau\omega}\|_{\mathrm{TV}}\,\,\dd\tau
    \Big),
    \qquad t\in[0,1].
\end{equation*}
In particular,
\begin{equation*}
\sup_{\|\varphi\|_{C([-1,0])}\le1}\|U_\omega(1)\varphi\|_{C([-1,0])}\le\exp\!\Big(\int_0^1\|\mu_{\theta_\tau\omega}\|_{\mathrm{TV}}\,\,\dd\tau
    \Big)
    <\infty.
\end{equation*}
Thus, $U^C_\omega(1)$ is bounded, and the image of the unit
ball is uniformly bounded. To prove equicontinuity, take $-1\le a<b\le0$.
Using the integral equation and the preceding estimate,
we obtain
\begin{equation*}
    \begin{split}
    \bigl|
    (U^C_\omega(1)u_0)(b)
        -& (U^C_\omega(1)u_0)(a)
    \bigr|
     =
    \bigl|z(1+b,\mu,u_0)-z(1+a,\mu,u_0)\bigr|\\
    & \le
    \int_{1+a}^{1+b} \|\mu_{\theta_\tau\omega}\|_{\mathrm{TV}}
    \sup_{-1\le r\le\tau}|z(r,\mu,u_0)|\,\dd\tau\\
    & \le
    \exp\!\left( \int_0^1\|\mu_{\theta_\tau\omega}\|_{\mathrm{TV}}\,\dd\tau
    \right)
    \int_{1+a}^{1+b} \|\mu_{\theta_\tau\omega}\|_{\mathrm{TV}}\,\dd\tau.
    \end{split}
\end{equation*}
By absolute continuity of the Lebesgue integral, the
right-hand side tends to zero as $b-a\to0$, uniformly
in $a,b$ and in $u_0$ with $\| u_0\|_{C([-1,0])}\le1$; we are using a classical fact that for any $f\in L_1(\ell)$ and any $\epsilon>0$ there is $\delta>0$ such that for any measurable $E\subset \RR$ such $\ell(E)<\delta$ we have 
\begin{equation*}
    \int_E f\,\dd \ell < \epsilon.
\end{equation*}
Hence, the image of the unit ball is equicontinuous. The Arzel\`a--Ascoli theorem now implies that this image
is relatively compact in $C([-1,0])$. Therefore, $U_\omega(1)$ is a compact operator.
\end{proof}

Assume that the base flow is an ergodic metric dynamical
system on a complete Lebesgue probability space
$(\Omega,\mathfrak F,\mathbb P)$.
We retain the previously imposed measurability assumptions
on the coefficient paths i.e., 
\begin{equation*}
    \Omega\ni\omega
    \mapsto
    (\mu_{\theta_t\omega})_{t\in[0,T]}
    \in\mathfrak m_T
\end{equation*}
is $(\mathfrak F,\Sigma_T)$-measurable.

Before we start we recall measurable projection theorem. It establishes 
measurability of suprema taken over a continuous time interval. See \cite[Theorem~2.12 and Corollary~2.13, pp.~13--14]{Crauel2002} for more details. 

\begin{theorem}[Measurable projection theorem]
\label{thm:measurable-projection}
Let $(\Omega, \mathfrak F,\mathbb P)$ be a complete probability
space, and let $S$ be a Polish space. For every
$A\in \mathfrak F\otimes \mathfrak B(S)$, its projection
\begin{equation*}
    \pi_\Omega(A)
    :=
    \{\omega\in\Omega:
      (\omega,s)\in A \text{ for some }s\in S\}
\end{equation*}
belongs to $ \mathfrak F$.
\end{theorem}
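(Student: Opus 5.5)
The plan is to follow the classical route through Suslin (analytic) sets and Choquet's capacitability theorem, as in \cite{Crauel2002}. The argument separates into a purely set-theoretic step, which shows that the projection is obtained from $\mathfrak F$ by the Suslin operation, and a measure-theoretic step, which shows that such sets are $\mathbb P$-measurable. Completeness of $\mathbb P$ is used only at the very end.

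\emph{Reduction to compact pavings.} Since $S$ is Polish, it is Borel isomorphic to a Borel subset of $[0,1]$, or equivalently a continuous image of the Baire space $\mathbb N^{\mathbb N}$. I would replace $S$ by a compact metrizable space $\widehat S$ containing a homeomorphic copy of $S$ as a $G_\delta$ subset, for instance via the embedding into $[0,1]^{\mathbb N}$. Then $A\in\mathfrak F\otimes\mathfrak B(S)$ can be regarded as an element of $\mathfrak F\otimes\mathfrak B(\widehat S)$, and its projection onto $\Omega$ is unchanged. Let $\mathcal K$ be the paving of compact subsets of $\widehat S$, and let $\mathcal P$ be the paving of finite unions of rectangles $F\times K$ with $F\in\mathfrak F$ and $K\in\mathcal K$. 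The first step is to show that every set in $\mathfrak F\otimes\mathfrak B(\widehat S)$ is $\mathcal P$-Suslin. This uses a monotone class type argument: the family of sets $B$ such that both $B$ and its complement are $\mathcal P$-Suslin contains the rectangles, because open subsets of $\widehat S$ are countable unions of compact sets. It is also closed under countable unions and complements, since Suslin sets are stable under countable unions and intersections.

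\emph{Projection step.} This is the step I expect to be the main obstacle. The aim is to show that if $A=\bigcup_{\sigma\in\mathbb N^{\mathbb N}}\bigcap_n P_{\sigma|n}$ with $P_{\sigma|n}\in\mathcal P$, then
\begin{equation*}
    \pi_\Omega(A)=\bigcup_{\sigma}\bigcap_n \pi_\Omega(P_{\sigma|n}).
\end{equation*}
After arranging, without loss of generality, that the scheme is decreasing along each branch, the inclusion ``$\supseteq$'' reduces to the following claim: a decreasing sequence of nonempty compact sections $(P_{\sigma|n})_\omega\subset\widehat S$ has nonempty intersection. This is exactly the point where compactness of the second coordinate is indispensable. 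Since $\pi_\Omega(P)\in\mathfrak F$ for $P\in\mathcal P$, it follows that $\pi_\Omega(A)$ is $\mathfrak F$-Suslin.

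\emph{Measurability step.} Finally I would show that every $\mathfrak F$-Suslin set is $\mathbb P$-measurable. I would first try Choquet's capacitability theorem, applied to the outer measure $\mathbb P^*$, which is a capacity with respect to the paving $\mathfrak F$. This gives inner approximation by sets in $\mathfrak F$, while outer approximation by sets in $\mathfrak F$ holds trivially, so the set differs from an element of $\mathfrak F$ by a $\mathbb P$-null set. As a fallback, there is the direct classical argument that the Suslin operation preserves the $\sigma$-algebra of measurable sets of any complete measure. Since $\mathbb P$ is complete, either route yields $\pi_\Omega(A)\in\mathfrak F$.
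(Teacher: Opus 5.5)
Your proposal is correct, and it is essentially the standard argument behind the result the paper cites; the paper gives no proof of its own, only the reference to Crauel. You embed $S$ as a $G_\delta$ in a compact metrizable space, show via compact fibres that the projection is $\mathfrak F$-Suslin, and conclude by capacitability of $\mathbb P^*$ (or by closure of a complete $\sigma$-algebra under the Suslin operation), handling the two delicate points correctly: making the scheme decreasing using finite intersections in $\mathcal P$, and the nonempty intersection of decreasing compact sections.
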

A consequence of the above is that, for every jointly measurable
function
\begin{equation*}
    h:\Omega\times[0,1]\to[0,\infty],
\end{equation*}
the mapping
\begin{equation*}
    \Omega\ni\omega
    \mapsto \sup_{0\leq s\leq1}h(\omega,s)
\end{equation*}
is $ \mathfrak F$-measurable. Indeed, for every $a\in\mathbb R$,
\begin{equation*}
    \big\{\omega:
      \sup_{0\leq s\leq1}h(\omega,s)>a\big\}
    =
    \pi_\Omega
    (\{(\omega,s):h(\omega,s)>a\})
    \in\mathfrak F.
\end{equation*}
We shall apply this observation to jointly measurable functions
given by the logarithms of operator norms.

\begin{theorem}
Suppose that the top Lyapunov exponent of the semiflow
on $C$ satisfies
\begin{equation*}
    \lambda_{\mathrm{top}}^C>-\infty.
\end{equation*}
Then this semiflow admits an Oseledets decomposition
in the sense of
\cite[Definition~3.1]{MierczynskiNovoObaya2020}.
\end{theorem}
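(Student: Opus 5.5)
The plan is to verify, one by one, the hypotheses of \cite[Theorem~3.4]{MierczynskiNovoObaya2020} for the semiflow $((U^C_\omega(t)),(\theta_t))$ on the separable Banach space $C([-1,0])$; that theorem then delivers the Oseledets decomposition. The hypotheses are of four kinds: the base is an ergodic metric dynamical system on a complete (Lebesgue) probability space; the cocycle is a measurable linear skew-product semiflow; two short-time integrability conditions hold; and $U^C_\omega(1)$ is compact, together with $\lambda_{\mathrm{top}}^C>-\infty$.

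The structural hypotheses are mostly already in place. Ergodicity and completeness are standing assumptions of this section. The measurable linear semiflow structure on $C([-1,0])$ is given by Remark~\ref{remark-U-notation}, which builds on Theorem~\ref{thm:AC-measurable-semiflow} under the $(\mathfrak F,\Sigma_T)$-measurability assumption on $\omega\mapsto(\mu_{\theta_t\omega})_{t\in[0,T]}$. Compactness of the time-one operator is Theorem~\ref{prop:compact-U-C}, and $\lambda_{\mathrm{top}}^C>-\infty$ is assumed in the statement.

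For integrability, I would use the Gr\"onwall bound $\|U^C_\omega(t)\|_{\mathcal L(C)}\le e^{Kt}$ from Remark~\ref{remark-U-notation}. It holds for every $\omega$ because $\|\mu_{\theta_\tau\omega}\|_{\mathrm{TV}}\le K$ for all $\tau$. This gives
\begin{equation*}
    \sup_{0\le s\le1}\log^+\|U^C_\omega(s)\|\le K
    \qquad\text{and}\qquad
    \sup_{0\le s\le1}\log^+\|U^C_{\theta_s\omega}(1-s)\|\le K.
\end{equation*}
So both functions are bounded, and hence belong to $L^1(\mathbb P)$ once they are known to be measurable.

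The step I expect to be the main obstacle is this measurability. The plan is first to show that $(s,\omega)\mapsto\|U^C_\omega(s)\|_{\mathcal L(C)}$ is jointly measurable. Fix a countable dense subset $\{u_j\}$ of the unit ball of $C([-1,0])$. Then
\begin{equation*}
    \|U^C_\omega(s)\|=\sup_j\|U^C_\omega(s)u_j\|_C ,
\end{equation*}
and each $(s,\omega)\mapsto\|U^C_\omega(s)u_j\|_C$ is jointly measurable by the joint measurability in Remark~\ref{remark-U-notation}. For the second function, the map $(s,\omega)\mapsto\|U^C_{\theta_s\omega}(1-s)\|$ is obtained by composing with the measurable map $(s,\omega)\mapsto(1-s,\theta_s\omega)$, which is measurable because $\theta$ is $(\mathfrak B(\RR)\otimes\mathfrak F,\mathfrak F)$-measurable. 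Finally, the measurable projection theorem (Theorem~\ref{thm:measurable-projection}) and the remark following it show that the suprema over $s\in[0,1]$ are $\mathfrak F$-measurable; completeness of $\mathbb P$ is what makes this step work. A further subtlety is whether \cite{MierczynskiNovoObaya2020} requires extra hypotheses such as strong measurability. If so, I would check them against the same joint measurability and the continuity of $t\mapsto U^C_\omega(t)u$ established in Remark~\ref{remark-U-notation}.
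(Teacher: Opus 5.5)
Your proposal is correct and follows the paper's proof essentially step by step. Both apply \cite[Theorem~3.4]{MierczynskiNovoObaya2020}, bound the two short-time $\log^+$ suprema by $K$ via the Gr\"onwall estimate, obtain their measurability from joint measurability of the operator norms and the measurable projection theorem on the complete base, and take compactness of $U^C_\omega(1)$ from Theorem~\ref{prop:compact-U-C}; your countable-dense-set formula for $\|U^C_\omega(s)\|$ and the composition with $(s,\omega)\mapsto(1-s,\theta_s\omega)$ simply spell out the joint measurability that the paper asserts without detail.
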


\begin{proof}
We apply \cite[Theorem~3.4]{MierczynskiNovoObaya2020},
including the standing assumptions of Section~\ref{sec:Carath-sol}
of that paper. The space $C$ is a separable Banach space, and the solution
operators form a measurable linear semiflow by the results
of the preceding section.
The Gr{\"o}nwall  estimate gives $  \|U_\omega^C(t)\|_{\mathcal L(C)}
    \leq e^{KT}$. Consequently,
\begin{equation*}
    \sup_{0\leq s\leq1}
    \log^+\|U_\omega^C(s)\|_{\mathcal L(C)} 
    \leq K, \quad\text{and}\quad \sup_{0\leq s\leq1}
    \log^+\|U_{\theta_s\omega}^C(1-s)\|_{\mathcal L(C)}
    \leq K.
\end{equation*}
These suprema are measurable on the complete Lebesgue
base by the measurable projection theorem~\ref{thm:measurable-projection}, applied to
the jointly measurable operator norms.
Thus both functions belong to $L^1(\Omega,\mathfrak F,\mathbb P)$.

The operator $U_\omega^C(1)$ is compact by the compactness
result proved in Theorem~\ref{prop:compact-U-C}. Together with the assumption
$\lambda_{\mathrm{top}}^C>-\infty$, these observations
verify all the hypotheses of the cited theorem.
\end{proof}

The passage to $AC$ requires more than its continuous
inclusion in $C$: one must also control the regularization
operator and verify compatibility of the measurable
structures. These are the properties used
in \cite[Theorem~3.6]{Kryspin2024}.

Before proceeding to the transfer of the Oseledets decomposition, we recall Lusin--Souslin theorem. Lusin--Souslin theorem ensures
compatibility of the Borel structures under the inclusion
$AC([-1,0])\hookrightarrow C([-1,0])$ and allows measurability
to be recovered in the stronger phase space. See \cite[Theorem~15.1]{Kechris1995} for more details. 

\begin{theorem}[Lusin--Souslin]
\label{thm:lusin-souslin}
Let $X$ and $Y$ be Polish spaces, and let $f:X\to Y$ be
continuous. If $A\subseteq X$ is Borel and the restriction
$f{\restriction}_A$ is injective, then $f(A)$ is a Borel subset of $Y$.
\end{theorem}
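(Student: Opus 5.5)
We plan to prove this by the classical descriptive set-theoretic route (as in \cite[Theorem~15.1]{Kechris1995}). The route has three stages: reduce to $A=X$ with $X$ a closed subset of the Baire space $\mathcal N=\NN^{\NN}$, build a Lusin scheme of Borel sets via analytic separation, and identify $f(X)$ as a countable intersection of countable unions of these Borel sets.

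\emph{Reduction.} Since $A$ is Borel in the Polish space $X$, there is a finer Polish topology on $X$ with the same Borel sets in which $A$ is clopen. The map $f$ remains continuous for this finer topology, and $A$, being closed, is itself Polish. So we may replace $X$ by $A$ and assume $f:X\to Y$ is continuous and injective. Next we use that every Polish space is the image of a closed set $F\subseteq\mathcal N$ under a continuous bijection $g:F\to X$. Replacing $f$ by $f\circ g$, which is still continuous and injective, we may assume $X=F$ is closed in $\mathcal N$.

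\emph{Lusin scheme.} For a finite sequence $s\in\NN^{<\NN}$ put $X_s:=X\cap N_s$, where $N_s$ is the basic clopen cylinder. Each $f(X_s)$ is analytic as a continuous image of a Polish space. By injectivity, the sets $f(X_s)$ with $|s|=n$ are pairwise disjoint. The key tool is the Lusin separation theorem in its countable form: a sequence of pairwise disjoint analytic sets can be separated by pairwise disjoint Borel sets. Using it, we construct by induction on $|s|$ Borel sets $B_s$ satisfying:
\begin{itemize}
\item $f(X_s)\subseteq B_s\subseteq\overline{f(X_s)}$;
\item $B_{s^\frown i}\subseteq B_s$;
\item $B_s\cap B_t=\varnothing$ for distinct $s,t$ of the same length.
\end{itemize}
At each step we first separate the sets $f(X_{s^\frown i})$, $i\in\NN$, by disjoint Borel sets. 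We then intersect with $B_s$ and with the closed set $\overline{f(X_{s^\frown i})}$.

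\emph{Identification.} We claim
\begin{equation*}
f(X)=\bigcap_{n}\ \bigcup_{|s|=n}B_s,
\end{equation*}
which is Borel. The inclusion $\subseteq$ is immediate. For $\supseteq$, let $y$ lie in the right-hand side. Disjointness and nesting give a unique $x\in\mathcal N$ with $y\in B_{x|n}$ for all $n$. Then each $X_{x|n}$ is nonempty, so $x\in\overline X=X$. By continuity of $f$ at $x$, the diameters of $f(X_{x|n})$ tend to $0$, and these sets contain $f(x)$. Since $y\in\overline{f(X_{x|n})}$ for every $n$, we get $y=f(x)\in f(X)$.

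The main obstacle is the countable separation theorem for analytic sets together with the change-of-topology lemma. Both are standard, and we intend to quote them from \cite{Kechris1995} rather than reprove them.
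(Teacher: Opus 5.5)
Your proposal is correct and is essentially the standard proof of \cite[Theorem~15.1]{Kechris1995}: a finer Polish topology making $A$ clopen, reduction to a closed subset of the Baire space, a disjoint nested Borel Lusin scheme obtained from countable analytic separation, and the identity $f(X)=\bigcap_n\bigcup_{|s|=n}B_s$. The paper gives no independent argument but cites exactly this theorem, and your details check out, including the choice $B_\emptyset=\overline{f(X)}$, the intersections with $B_s$ and $\overline{f(X_{s^\frown i})}$, and the use of closedness of $X$ and continuity at $x$ to get $y=f(x)$.
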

In particular, if $i:X\to Y$ is a continuous injection between
Polish spaces, then
\begin{equation*}
    \mathfrak B(X)
    =
    \{i^{-1}(B):B\in \mathfrak B(Y)\}.
\end{equation*}
Indeed, continuity gives one inclusion, while the other follows
from
\begin{equation*}
    A=i^{-1}(i(A)),
    \qquad A\in \mathfrak B(X),
\end{equation*}
since $i(A)\in \mathfrak B(Y)$ by
Theorem~\ref{thm:lusin-souslin}.
Consequently, for any measurable space $(S,\Sigma)$ and any
mapping $g:S\to X$, the mapping $g$ is
$(\Sigma, \mathfrak B(X))$-measurable if and only if $i\circ g$
is $(\Sigma, \mathfrak B(Y))$-measurable.

\begin{theorem}
Under the assumption $\lambda_{\mathrm{top}}^C>-\infty$,
the semiflow on $AC$ admits an Oseledets decomposition
with the same Lyapunov exponents
as the semiflow on $C$. More precisely, let
\begin{equation*}
    i:AC\to C,
    \qquad i(u)=u,
\end{equation*}
be the natural inclusion.
On a common invariant set of full measure, the
finite-dimensional Oseledets subspaces satisfy $iE_j^{AC}(\omega)=E_j^C(\omega)$,
and the corresponding filtration spaces satisfy $F_j^{AC}(\omega)=i^{-1}(F_j^C(\omega))$. 
In the case of finitely many finite Lyapunov exponents,
the residual space satisfies $ F_\infty^{AC}(\omega)=i^{-1}(F_\infty^C(\omega))$. 
\end{theorem}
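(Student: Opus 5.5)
The plan is to verify the hypotheses of \cite[Theorem~3.6]{Kryspin2024}, which compares Oseledets decompositions on an ambient space and a continuously embedded, invariant, more regular subspace. The ambient semiflow is $U^C$ on $C([-1,0])$, which admits an Oseledets decomposition by the preceding theorem. The embedded semiflow is $U^{AC}$ on $AC([-1,0])$, a measurable linear skew-product semiflow by Theorem~\ref{thm:AC-measurable-semiflow}. The link between them is the natural inclusion $i$, which is continuous and injective with $\|iu\|_C\le\|u\|_{AC}$, and the intertwining relation $U_\omega^C(t)i=iU_\omega^{AC}(t)$ from Remark~\ref{remark-U-notation}.

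The key ingredient will be the regularization operator. For $t\ge1$, the segment $U^C_\omega(t)u$ lies in $AC([-1,0])$ for every $u\in C$. Indeed, the solution restricted to $[0,T]$ is absolutely continuous, and $[t-1,t]\subset[0,T]$. So we define $R_\omega:=U^C_\omega(1)$ viewed as a map $C\to AC$, with $iR_\omega=U^C_\omega(1)$ and $R_\omega i=U^{AC}_\omega(1)$. I would bound it by Proposition~\ref{prop:Gronwal-determine-bound} applied on $[0,1]$:
\begin{equation*}
    \|R_\omega u\|_{AC([-1,0])}\le \|z{\restriction}_{[0,1]}\|_{AC([0,1])}\le e^{K}\|u\|_C .
\end{equation*}
This gives $\sup_\omega\log^+\|R_\omega\|_{\mathcal L(C,AC)}\le K$, which is integrable. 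Analogous uniform bounds $e^{K}$ and $1+Ke^{K}$ (Lemma~\ref{lemma:U-cont-Ac}) for $U^{AC}_{\omega}(s)$ and $U^{AC}_{\theta_s\omega}(1-s)$, $s\in[0,1]$, give the integrability of the short-time suprema on $AC$. Measurability of these suprema follows, as before, from the measurable projection theorem~\ref{thm:measurable-projection}.

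Next I would check measurability of $\omega\mapsto R_\omega u$ into $AC$. Here the Lusin--Souslin theorem~\ref{thm:lusin-souslin} enters. Both $AC$ and $C$ are Polish, and $i$ is a continuous injection. The map $\omega\mapsto iR_\omega u=U^C_\omega(1)u$ is measurable by Remark~\ref{remark-U-notation}, so $\omega\mapsto R_\omega u$ is $(\mathfrak F,\mathfrak B(AC))$-measurable. The same argument gives joint measurability in $(\omega,u)$ via the Carath\'eodory lemma, since $R_\omega$ is continuous in $u$.

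With these hypotheses in place, \cite[Theorem~3.6]{Kryspin2024} gives the conclusion. On a common invariant full-measure set, the exponents coincide, and $iE^{AC}_j(\omega)=E^C_j(\omega)$ because the finite-dimensional Oseledets spaces of $U^C$ lie in the range of $R$: from $E^C_j(\omega)=U^C_{\theta_{-1}\omega}(1)E^C_j(\theta_{-1}\omega)$ we get $E^C_j(\omega)\subset iAC$. The filtrations are then obtained as preimages under $i$, and so is $F_\infty$ when there are finitely many finite exponents.

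The main obstacle I expect is matching the paper's objects to the exact hypotheses of \cite[Theorem~3.6]{Kryspin2024}. In particular, I must confirm that the growth rates in the two norms agree along vectors in $AC$. The factorization $U^{AC}_\omega(t+1)=R_{\theta_t\omega}U^C_\omega(t)i$ gives $\|U^{AC}_\omega(t+1)u\|_{AC}\le e^K\|U^C_\omega(t)iu\|_C$. Together with $\|i\cdot\|_C\le\|\cdot\|_{AC}$, this yields equality of the Lyapunov exponents vectorwise, which is the heart of the comparison.
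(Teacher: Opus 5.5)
Your proposal is correct and follows essentially the same route as the paper. Like the paper, you verify the hypotheses of \cite[Theorem~3.6]{Kryspin2024} using four ingredients: the inclusion $i$, the intertwining identity $U^C_\omega(t)i=iU^{AC}_\omega(t)$, the regularization operator $R_\omega$ with $\|R_\omega\|_{\mathcal L(C,AC)}\le e^K$, and the Lusin--Souslin theorem for measurability. Your identification of $iE_j^{AC}(\omega)=E_j^C(\omega)$ via $E_j^C(\omega)=U^C_{\theta_{-1}\omega}(1)E_j^C(\theta_{-1}\omega)$, and your norm comparison via $U^{AC}_\omega(t+1)=R_{\theta_t\omega}U^C_\omega(t)i$, also match the paper's argument.
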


\begin{proof}
We verify assumptions (A1)--(A6) of
\cite[Section~3]{Kryspin2024}, with
$X_1=C$ and $X_2=AC$. The natural inclusion $i$ is linear, injective, and bounded,
since
\begin{equation*}
    \|iu\|_C
    \leq
    |u(-1)|+\int_{-1}^{0}|u'(s)|\,\dd s
    =
    \|u\|_{AC}.
\end{equation*}
Uniqueness of solutions gives
\begin{equation*}
    U_\omega^C(t)i=iU_\omega^{AC}(t),
    \qquad t\geq0.
\end{equation*}
Thus (A1) and (A2) hold. To verify (A3), define the regularization operator
\begin{equation*}
    R_\omega:C\to AC,
    \qquad
    (R_\omega u)(s):=z(1+s;\mu,u),
    \quad s\in[-1,0].
\end{equation*}
This operator is well defined because the solution
is absolutely continuous on $[0,1]$, even when the
initial history is only continuous.
Moreover,
\begin{equation*}
    iR_\omega=U_\omega^C(1)
    \quad \text{and} \quad
    R_\omega i=U_\omega^{AC}(1).
\end{equation*}
The solution estimate and the equation imply
\begin{equation*}
    \begin{aligned}
        \|R_\omega u\|_{AC}
        &=
        |u(0)|+\int_0^1|z'(t;\mu,u)|\,\dd t\\
        &\leq
        \|u\|_C+
        \int_0^1 K e^{Kt}\|u\|_C\,\dd t\\
        &=e^K\|u\|_C.
    \end{aligned}
\end{equation*}
Hence $R_\omega\in\mathcal L(C,AC)$ and
\begin{equation*}
    \sup_{0\leq s\leq1}
    \log^+\|R_{\theta_s\omega}\|_{\mathcal L(C,AC)}
    \leq K.
\end{equation*}
The measurability of this supremum follows from the
joint measurability of the operator and the
measurable projection theorem on the complete Lebesgue
base. The required integrability in (A3) follows. Both $C$ and $AC$ are separable Banach spaces. Thus (A5) holds as well. We next check (A4) and (A6).

Since $i$ is a continuous injection between Polish spaces,
the Lusin--Souslin theorem implies that $i(A)$ is Borel
in $C$ for every Borel set $A\subseteq AC$.
In particular, $A=i^{-1}(i(A))$, 
which proves (A6).

For each fixed $u\in C$, measurability of
$\omega\mapsto U_\omega^C(1)u$ and the identity
$iR_\omega u=U_\omega^C(1)u$ give
\begin{equation*}
    \{\omega:R_\omega u\in A\}
    =
    \{\omega:U_\omega^C(1)u\in i(A)\}
    \in\mathfrak F
\end{equation*}
for every Borel set $A\subseteq AC$.
This proves the strong measurability required in (A4).
The same argument with an additional time parameter
justifies the joint measurability used above.

All assumptions of
\cite[Theorem~3.6]{Kryspin2024}
are therefore satisfied, and the Oseledets decomposition
on $C$ transfers to $AC$. For completeness, the transferred subspaces are given by
\begin{equation*}
    E_j^{AC}(\omega)
    :=
    R_{\theta_{-1}\omega}E_j^C(\theta_{-1}\omega).
\end{equation*}
Indeed, invariance of the Oseledets subspaces yields
\begin{equation*}
    \begin{aligned}
        iE_j^{AC}(\omega)
        &=
        iR_{\theta_{-1}\omega}
        E_j^C(\theta_{-1}\omega)\\
        &=
        U_{\theta_{-1}\omega}^C(1)
        E_j^C(\theta_{-1}\omega)\\
        &=
        E_j^C(\omega).
    \end{aligned}
\end{equation*}
This is the construction in
\cite[Claim~3.4 and Theorem~3.6]{Kryspin2024}.
Since $i$ is injective, it also gives
\begin{equation*}
    \dim E_j^{AC}(\omega)=\dim E_j^C(\omega).
\end{equation*}
The equality of growth rates can also be seen directly.
For $u\in AC$ and $t\geq1$, the cocycle identities imply
\begin{equation*}
    U_\omega^{AC}(t)u
    =
    R_{\theta_{t-1}\omega}
    U_\omega^C(t-1)iu.
\end{equation*}
Consequently,
\begin{equation*}
    \|U_\omega^C(t)iu\|_C
    \leq
    \|U_\omega^{AC}(t)u\|_{AC}
    \leq
    e^K\|U_\omega^C(t-1)iu\|_C.
\end{equation*}
For nonzero $u\in E_j^{AC}(\omega)$, the first and last
expressions have logarithmic growth rate $\lambda_j$.
Therefore,
\begin{equation*}
    \lim_{t\to\infty}
    \frac{1}{t}\log\|U_\omega^{AC}(t)u\|_{AC}
    =
    \lambda_j.
\end{equation*}
The same estimate preserves growth rate $-\infty$.
The filtration identities, measurability, and temperedness
of the associated projections follow from the cited
transfer theorem.
\end{proof}

The assumption $\lambda_{\mathrm{top}}^C>-\infty$
must be verified separately or retained as an explicit
hypothesis. The bound on the total variation provides
the required upper growth estimates, but does not
exclude superexponential decay.

\begin{example}
Finite-time extinction may occur even for a deterministic
equation with uniformly bounded measure-valued coefficients.
Consider
\begin{equation*}
    z'(t)=-z(\lfloor t\rfloor),\qquad t\geq0,
\end{equation*}
with initial history $u_0\in C([-1,0])$.
This equation has the required measure representation with
\begin{equation*}
    \mu_t=-\delta_{\lfloor t\rfloor-t},
    \qquad
    \|\mu_t\|_{\mathrm{TV}}=1.
\end{equation*}
Since $\lfloor t\rfloor-t\in(-1,0]$, the delay remains
within the prescribed interval. For $t\in[n,n+1]$, integration of the equation gives
\begin{equation*}
    z(t)=(1-(t-n))z(n).
\end{equation*}
The
equation on $[0,1)$ reduces to $z'(t)=-u_0(0)$. Hence for $f\in [0,1)$ we have 
\begin{equation*}
    z(t)=(1-t)u_0(0) 
\end{equation*}
Thus $z(1)=0$, regardless of the value of $u_0(0)$.
On each subsequent interval $[n,n+1)$, the derivative
equals $-z(n)$. Induction therefore shows that
$z(t)=0$ for every $t\geq1$. Let $U(t,0)$ denote the solution operator from initial
time $0$ to time $t$, defined by
\begin{equation*}
    \bigl(U(t,0)u_0\bigr)(s)=z(t+s),
\end{equation*}
for $s\in[-1,0]$. 
Although the solution vanishes from time $1$ onward,
its entire history segment is guaranteed to vanish
only from time $2$ onward, when $[t-1,t]\subseteq[1,\infty)$.
Since this holds for every initial history, we obtain
\begin{equation*}
    U(t,0)=0\quad\text{for all }t\geq2,
    \qquad
    \lim_{t\to\infty}\frac{1}{t}\log\|U(t,0)\|=-\infty,
\end{equation*}
where $\log0=-\infty$. Thus, boundedness of the coefficients and well-posedness
do not by themselves exclude a growth rate of $-\infty$.
The assumption $\lambda_{\mathrm{top}}>-\infty$ excludes
this type of degeneracy.
\end{example}

\section*{Data availability}

No datasets were generated or analysed in this study.

\end{document}